\newcommand{\pl}[1]{\foreignlanguage{polish}{#1}}
\newtheorem{thm}{Theorem}[section]
\newtheorem{lem}[thm]{Lemma}
\newtheorem{cor}[thm]{Corollary}
\newtheorem{pro}[thm]{Proposition}
\theoremstyle{definition}
\theoremstyle{remark}
\newtheorem*{remark}{Remark}
\newtheorem*{remark1}{Remark 1}
\newtheorem*{remark2}{Remark 2}
\numberwithin{equation}{section}
\newcommand\Cvp[1]{Cv_p(#1)}
\newcommand\Cvq[1]{Cv_q(#1)}
\newcommand{\RR}{\mathbb{R}}
\newcommand{\CC}{\mathbb{C}}
\newcommand{\DD}{\mathbb{D}}
\newcommand{\tm}{\tilde{m}}
\newcommand{\Nb}{\bar{N}}
\newcommand{\ZZ}{\mathbb{Z}}
\newcommand{\fg}{\frak g}
\newcommand{\fk}{\frak k}
\newcommand{\fp}{\frak p}
\newcommand{\fs}{\frak s}
\newcommand{\fl}{\frak l}
\newcommand{\nb}{\bar{n}}
\newcommand{\la}{\lambda}
\newcommand{\zns}{\zeta_{n,s}}
\newcommand{\bZ}{\mathbb{Z}}
\newcommand{\bR}{\mathbb{R}}
\newcommand{\ut}{u_{\theta}}
\newcommand{\uph}{u_{\varphi}}
\newcommand{\vp}{\varphi}
\newcommand{\ve}{\varepsilon}
\newcommand\bignorm[2]{\left.{\bigl\Vert{#1}\bigr\Vert_{#2}}\right.}
\newcommand{\mD}{\mathcal{D}}
\newcommand{\mP}{\mathcal{P}}
\newcommand{\mL}{\mathcal{L}}
\newcommand{\mR}{\mathcal{R}}
\newcommand{\mJ}{\mathcal{J}}
\newcommand{\mF}{\mathcal{F}}
\DeclareMathOperator{\Real}{Re}
\DeclareMathOperator{\Ima}{Im}
\DeclareMathOperator{\Dom}{Dom}
\DeclareMathOperator{\Ran}{Ran}
\DeclareMathOperator{\Par}{Par}
\DeclareMathOperator{\Int}{Int}
\providecommand{\flo}[1]{\left \lfloor #1 \right \rfloor }
\newcommand{\bN}{\mathbb{N}}
\newcommand{\TT}{\mathbb{T}}
\newcommand{\NN}{\mathbb{N}}
\DeclareSymbolFont{fouriersymbols}{FMS}{futm}{m}{n}
\DeclareSymbolFont{fourierlargesymbols}{FMX}{futm}{m}{n}
\DeclareMathDelimiter{\VERT}{\mathord}{fouriersymbols}{152}{fourierlargesymbols}{147}
\author{Fulvio Ricci}
\address{
	Fulvio Ricci\\
	Scuola Normale Superiore\\
	Piazza dei Cavalieri 7\\
	56126 Pisa, Italy}
\email{fricci@sns.it}
\author{B{\l}a{\.z}ej Wr{\'o}bel}
\address{ B{\l}a{\.z}ej Wr{\'o}bel\\
	Universit\"{a}t Bonn \\
	Mathematical Institute\\
	Endenicher Allee 60\\
	D--53115 Bonn \\
	Germany \&
	Instytut Matematyczny\\
	Uniwersytet \pl{Wroc{\lll}awski}\\
	Pl. Grun\-waldzki 2/4\\
	50-384 \pl{Wroc{\lll}aw}\\
	Poland}
\email{blazej.wrobel@math.uni.wroc.pl}
\subjclass[2010]{22E30, 42B15, 33C80, 22E46, 43A80}
\keywords{spectral multiplier, $SL(2,\mathbb{R})$, spherical function}
\title[]
{Spectral multipliers for functions of fixed $K$-type on $L^p(SL(2,\RR))$ 
}
\begin{document}
\selectlanguage{english} 	
\begin{abstract}
	We prove an $L^p$ spectral multiplier theorem for functions of the $K$-invariant sublaplacian $L$ acting on the space of functions of fixed $K$-type on the group $SL(2,\RR).$ As an application we compute the joint $L^p(SL(2,\RR))$ spectrum of $L$ and the derivative along $K$.
\end{abstract}
\maketitle 

\section{Introduction}

In  the Lie algebra $\fg=\fs\fl(2,\bR)$  of $G=SL(2,\bR)$  set
\begin{equation}
\label{eq:XYdef}
X=\begin{pmatrix} 0&1/2\\
-1/2&0 \end{pmatrix}\ ,\qquad Y_1=\begin{pmatrix} 0&1/2\\
1/2&0 \end{pmatrix}\ ,\qquad Y_2=\begin{pmatrix} 1/2&0\\
0&-1/2 \end{pmatrix}\ .
\end{equation} Then $X$ and $\{Y_1,Y_2\}$ generate, respectively, the two components $\fk$ and $\fp$ in the Cartan decomposition of $\fg$. The two left-invariant differential operators $X$ and $L=-Y_1^2-Y_2^2$ commute and generate the full algebra of left- and ${\rm Ad}(K)$-invariant (also called $K$-central) differential operators on $G$.

We denote by the same symbols the unique self-adjoint extensions of $-iX$ and $L$ to $L^2(G)$, which strongly commute, in the sense that all their spectral projections commute with each other.

The spectral decomposition of $L^p(G)$ relative to $-iX$ is into  $K$-types:
$$
L^p(G)=\sum_{n\in\frac12\bZ}V_n^p\ ,
$$
with
$$
V_n^p=\big\{f\in L^p(G):  f\big(g\exp(\theta X)\big)=e^{in\theta}f(g)\big\}\ .
$$
This furnishes an analogous decomposition of the multipliers of the sublaplacian
$$
m(L)=\sum_{n\in\bZ}T_n\mP_n\ ,
$$
where $m$ is a Borel function on $\RR,$ $\mP_n$ is the orthogonal projection of $L^2(G)$ on $V_n^2,$ while $T_n:V_n^2\longrightarrow V_n^2$ is
\begin{equation}\label{Tn}
T_n=m(L_{|_{V_n^2}})\ .
\end{equation}

In this paper we study $L^p$-boundedness of the operators $T_n$ in \eqref{Tn}. 

In the case $n=0$,  operators of the form $T_0$ have been widely studied in the literature, due to the identification $V^p_0\simeq L^p(G/K)$, which transforms $L_{|_{V^2_0}}$ into  the Laplace-Beltrami operator on the hyperbolic plane. We refer to the results of Stanton and Tomas \cite{StTo_1}, Anker \cite{A1},  Ionescu \cite{I2,I3}, and Meda and Vallarino \cite{MV}, providing, in the wider context of symmetric spaces,  conditions on $m$ which imply $L^p$-boundedness of $T_0$ for a given $p\in(1,\infty)$. These conditions on $m$ are of Mikhlin-H\"ormander type on the boundary of the region $\Delta_n$ (for $n=0$), to be defined in \eqref{Delta_n}, on which $m$ must be defined, bounded and holomorphic in the interior.

Our main result is a multiplier theorem of the same kind for general $n\in \frac12\bZ$.

Before describing the content of the paper in greater detail, we want to put this result in a broader perspective. We see the result of this paper a first step towards the analysis of joint multiplier operators $m(L,-iX)$ (or equivalently, convolution operators with $K$-central kernels). 

The topic of joint spectral multipliers for (strongly) commuting operators has achieved some attention over the last years in general contexts. The interested reader may consult the work by Albrecht \cite{Al}, Albrecht, Franks, and McIntosh \cite{AlFrMc}, Lancien, Lancien, and Le Merdy \cite{LanLanMer}, M\"uller, Ricci, and Stein  \cite{Mu:RiSt1,Mu:RiSt2}, Fraser \cite{Fr1,Fr2,Fr3}, Martini \cite{Martini_Phd,Martini_JFA,Martini_Annales}, and Sikora \cite{Sik}. 

Particularly relevant for the results of the present paper is the work of the second author \cite{WroJSMMSO} (see also \cite[Chapter 6]{PhD}), where joint multipliers of two operators were studied, with one of the two ($-iX$ in our case) having a $C^k$ functional calculus (finite order of smoothness of the multiplier function $m$ produces bounded operators $m(-iX)$ on $L^p(G)$), while the other operator ($L$ in our case) only having a holomorphic functional calculus (if $m(L)$ is bounded on $L^p(G),$ $p\neq 2,$ then $m$ admits a holomorphic extension outside of the $L^2$-spectrum). Theorem 3.1 in \cite{WroJSMMSO} provides a joint spectral multiplier theorem for the pair $(L,-iX)$. However, this result  is not satisfactory as it does not take into account the interaction between $L$ and $-iX$, which, although commuting, do not act on separate variables. In particular, the theorem in \cite{WroJSMMSO} only applies to multipliers which are defined on the product of the two individual spectra, and not just on the joint spectrum, which is strictly smaller.

The content of the paper is organized as follows. In Section \ref{sec:pre} we explain the notations used throughout the article. In Section \ref{sec:Kcent} we introduce the $K$-central spherical functions  $\zns$ - these are joint eigenfunctions of $(L,-iX)$ corresponding to the eigenvalues $(n,s(1-s))$ - and give  the $K$-central Plancherel and inversion formulas. Then in Section \ref{sec-estimates} we prove the main technical estimates for spherical functions. These are contained in Lemmas \ref{lem:spher_decom_small_t} and \ref{lem:spher_decom_large_t}. The estimates are then used to prove our main result, Theorem \ref{thm:multVnp}, which is a Mikhlin-H\"ormander multiplier theorem on $V_n^p$ for $1<p<\infty.$  Theorem \ref{thm:multVnp} implies continuous extendibility to $V_n^p$ of operators of the form $m(L_{|_{V_n^2}})$. The conditions on the holomorphic part of the multiplier are the same as in the Stanton-Tomas theorem for $n=0$. One also has to take into account the (finite) discrete part of the spectrum, which is absent when $n=0$.  We also obtain a holomorphic extension property of multipliers of $L,$ which are bounded on $V_n^p,$ see Proposition \ref{pro:CleSte}. This is in spirit of the results of Clerc and Stein \cite{CS}. Finally, using Theorem \ref{thm:multVnp} we describe the joint $L^p$-spectrum of the pair $(L,-iX),$ see Theorem \ref{thm:jointspectrum}. 

In view of applications to joint multiplier operators,  it is important to keep track of how constants involved in norm inequalities grow with $n$. Our proof of Theorem \ref{thm:multVnp} is based on splitting the kernel of $T_n$ into three parts: discrete, continuous-local, and continuous-global. We are able to prove estimates that grow polynomially in $n$ for both the discrete and the continuous-local part. Our estimates for the continuous-global part have a rapid growth that is controlled by $\Gamma(Cn^2)$.\footnote{A refinement of the splitting used to prove Theorem \ref{thm:multVnp} gives a bound for the continuous-global part which is uniform in $n$. This however worsens the polynomial bound for the continuous-local part to a control by $e^{Cn^2}$.} We believe that these estimates are far from being sharp, however better bounds seem to be unknown at this point. It would be interesting to understand if also the estimate for the continuous-global part can be made  polynomially growing, or even uniformly bounded in $n$. We hope to  to be able to return to this topic in the future.

\section{Preliminaries}
\label{sec:pre}
\subsection{General notation}
\quad

Throughout the paper by $\gamma$ we mean the function
\begin{equation}
\label{eq:defgamma}\gamma(s)=s(1-s),\qquad s\in \CC.
\end{equation} 

For $n\in \frac12\ZZ$ by $D_n$ we denote
$$D_n=\{s\in \frac12\bZ\colon s-|n|\in \bZ,\quad 1\leq s\leq |n|\}.$$ 
We write $\NN$ for the set of non-negative integers.  

For $t>0$ we let $S_t$ be the vertical strip  around $\Real(z)=1/2$ given by
$$S_{t}=\bigg\{z\in \CC\colon \frac12-t\le\Real z\le\frac12+t\bigg\}.$$ 
In particular, $S_0=\{z\in \CC\colon \Real(z)=1/2\}$. For $1<p<\infty$ the symbol $\delta(p)$ stands for
$$\delta(p)=|1/p-1/2|.$$

Let $U$ be an open subset of $\CC.$ We denote by $H^{\infty}(U)$ the space of bounded holomorphic functions in $U$ equipped with the supremum norm. Let $m$ be a bounded holomorphic function on $U$ which is continuous on its closure $\bar{U}$ together with derivatives up to the order $k$. We define the Mikhlin-H\"ormander norm at infinity of order $k$ on $\bar{U}$ by
\begin{equation}
\label{eq:MHU}
\|m\|_{MH(\bar{U},k)}=\max_{j=0,\ldots,k}\,\sup_{\la\in \bar{U}}(1+|\la|)^{j}\bigg|\frac{d^{j}}{d\la^{j}}m(\la)\bigg|.
\end{equation}
Slightly abusing this notation we also write
\begin{equation*}
\|m\|_{MH(\RR,k)}=\max_{j=0,\ldots,k}\,\sup_{\la\in \RR}(1+|\la|)^{j}\bigg|\frac{d^{j}}{d\la^{j}}m(\la)\bigg|.
\end{equation*}

For $V \subseteq \CC$ by $\Int V$ we mean its interior.  

By $\mF$ we denote the Fourier transform on $\RR$ given by
$$\mF(f)(x)=\int_{\RR} f(y)\,e^{ixy}\,dy,\qquad f\in L^1(\RR,dx).$$

For a locally compact Hausdorff group $H$ the symbol $\Cvp{H}$ stands for the Banach space of all right convolutors of $L^p(H).$
This space comes equipped with the norm
$$
\bignorm{S}{\Cvp{H}}
:= \sup_{\|f\|_{L^p(H)}=1}\|f*_{H} S\|_{L^p(H)}.
$$

By $G$ we always mean $SL(2,\RR).$ Elements of $G$ will by denote by $x,y,$ and the Haar measure on $G$ will be denoted by $dx.$ For a function $f\colon G\to \CC$ and $x\in G$ we write $$\tilde{f}(x)=f(x^{-1})\qquad\textrm{and}\qquad f^*(x)=\bar{f}(x^{-1}).$$  We shall often abbreviate $L^p:=L^p(G).$ By $\mD(G)$ we denote the space of compactly supported smooth functions on $G,$ while $\mD'(G)$ stands for the space of distributions on $G.$

Let $B_1,B_2$ be Banach spaces. If $T\colon B_1\to B_2$ is a linear operator we denote by $\|T\|_{B_1\to B_2}$ the operator norm of $T$. If $B_1=B_2=B$ we write $\sigma_B(T)$ for the spectrum of $T$ on $B.$

The symbol $a\lesssim_{\delta} b$ stands for the inequality $a\leq C_{\delta}\,b,$ with a constant $C_{\delta}$ that depends only on $\delta.$ We abbreviate $a\lesssim  b$ when $C$ is independent of significant quantities (in particular $C$ has to be independent of $n$). 

\subsection{The group $G=SL(2,\RR)$}
\quad

We set
\begin{equation*} 
N=\left\{n_{\xi}=
\begin{pmatrix} 1&\xi/2\\
0&1 \end{pmatrix}\colon \xi \in \RR \right\}, 
\qquad \Nb=\left\{\nb_{\xi}=
\begin{pmatrix} 1&0\\
\xi/2&1 \end{pmatrix}\colon \xi \in \RR \right\}\ , 
\end{equation*}
\begin{equation*}
K=\left\{u_{\theta}=\exp(\theta X)=
\begin{pmatrix} \cos(\theta/2) & \sin(\theta/2)\\
-\sin(\theta/2) & \cos(\theta/2) \end{pmatrix}\colon \theta \in [0,4\pi) \right\} \ ,
\end{equation*}
\begin{equation*}
A=\left\{a_t=\exp(tY_2)=
\begin{pmatrix} e^{t/2}&0\\
0&e^{-t/2} \end{pmatrix}\colon t \in \RR \right\},\qquad
A^+=\{a_t\in A: t>0 \}\ .
\end{equation*}

Throughout the paper $\TT$ stands for the torus $\big\{e^{i\theta/2}:\theta\in[0,4\pi]\big\}$. For a function $f\colon \TT\to \CC$ we let
$$
\fint_{\TT} f(\theta)\,d\theta=\frac1{4\pi}\int_0^{4\pi}f(\theta)\,d\theta. 
$$ 
We also write, for $f$ defined on $K$,
$$
\int_Kf(u)\,du=\fint_{\TT} f(u_\theta)\,d\theta\ .
$$ 
The convolution $*$ always means convolution on $G,$ i.e. 
$$f*g(x)=\int_{G}f(y)\,g(y^{-1}x)\,dy.$$

The group $G$ admits a Cartan decomposition $G=KA^+K.$ The corresponding integration formula reads
\begin{equation}
\label{eq:Cartan}
\int_{G}f(x)\,dx=\fint_{\TT}\fint_{\TT}\int_0^{\infty}\,f(u_{\psi}a_tu_{\theta})\sinh t\,dt\, d\psi\,d\theta.\end{equation}
Note that \eqref{eq:Cartan} leads to
\begin{equation*}
\int_{G}f(x)\,dx=\frac12\fint_{\TT}\fint_{\TT}\int_{\RR}\,f(u_{\psi}a_tu_{\theta})|\sinh(t)|\,dt\, d\psi\,d\theta.\end{equation*}

W also have Iwasawa decompositions $G=NAK=\bar{N}AK.$ In Iwasawa coordinates the integration formula becomes
\begin{equation*}
\int_{G}f(x)\,dx=\fint_{\TT}\int_{\RR}\int_{\RR}\,f(n_{\xi}a_tu_{\psi})e^t\, d\psi\,dt\,\,d\xi=\fint_{\TT}\int_{\RR}\int_{\RR}\,f(\nb_{\xi}a_tu_{\psi})e^t\, d\psi\,dt\,\,d\xi.\end{equation*} 	

We denote by $\Omega$ the Casimir operator
$$
\Omega=X^2-Y_1^2-Y_2^2\ ,
$$
with $X, Y_1,Y_2$ defined in \eqref{eq:XYdef}.

\medskip

\section{Spherical analysis of $K$-central functions}
\label{sec:Kcent}

\subsection{Spherical functions}\quad

Let $G$ be a Lie group and $K$ a compact subgroup of $G$. A function $f$ on $G$ is called {\it $K$-central} if $f(u^{-1}xu)=f(x)$ for every $x\in G$ and $u\in K$. A differential operator $D$ on $G$ is called $K$-central if it commutes with the inner automorphisms of $G$ induced by elements  $u\in K$.

We denote by $L^1(G)^K$ the convolution algebra of integrable $K$-central functions on $G$ and by $\DD(G)^K$ the algebra of left-invariant and $K$-central differential operators on $G$. We also set $\mD^K(G)=\mD(G)\cap L^1(G)^K$.

One says that $(G,K)$ is a {\it strong Gelfand pair} if $L^1(G)^K$ is commutative. The next two statements summarize the results about strong Gelfand pairs that will be relevant for us. We refer to \cite[Ch. 8]{Wo} for proofs and more details. 

\begin{pro}\label{strongGelfand}
Let $G$ be a connected Lie group and $K$ a compact subgroup of $G$. The following conditions are equivalent:
\begin{enumerate}
\item[\rm(i)] $(G,K)$ is a strong Gelfand pair;
\item[\rm(ii)] $\DD(G)^K$ is commutative;
\item[\rm(iii)] for every irreducible unitary representation $\pi$ of $G$, the restriction of $\pi$ to $K$ decomposes into irreducibles without multiplicities.
\end{enumerate}
\end{pro}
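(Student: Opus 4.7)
The plan is to establish the three-way equivalence by the cycle (iii)$\,\Rightarrow\,$(i)$\,\Rightarrow\,$(ii)$\,\Rightarrow\,$(iii).

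For (iii)$\,\Rightarrow\,$(i), given $f_1,f_2\in L^1(G)^K$, the Gelfand--Raikov theorem reduces the equality $f_1*f_2=f_2*f_1$ to checking $\pi(f_1)\pi(f_2)=\pi(f_2)\pi(f_1)$ for every irreducible unitary representation $(\pi,H_\pi)$ of $G$. The $K$-centrality of $f_i$ translates into $\pi(u)\pi(f_i)\pi(u)^{-1}=\pi(f_i)$ for $u\in K$, so $\pi(f_i)$ lies in the commutant $\pi(K)'$. Decomposing $H_\pi=\bigoplus_{\delta}H_\pi(\delta)$ into $K$-isotypic components, Schur's lemma identifies $\pi(K)'$ with $\prod_{\delta}M_{m_\pi(\delta)}(\CC)$, where $m_\pi(\delta)$ is the multiplicity of $\delta$ in $\pi|_K$; under (iii) each $m_\pi(\delta)\le 1$, and this product is abelian.

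For (i)$\,\Rightarrow\,$(ii), use the algebra isomorphism $\DD(G)^K\cong U(\fg)^{\mathrm{Ad}(K)}$ under which each $D\in\DD(G)^K$ acts as right convolution $Df=f*T_D$ with an $\mathrm{Ad}(K)$-invariant distribution $T_D$ supported at $e$. Pick a $K$-central approximate identity $\{\psi_\ve\}_{\ve>0}\subset\mD^K(G)$; then $D\psi_\ve=\psi_\ve*T_D\in\mD^K(G)$ and $D\psi_\ve\to T_D$ in the sense of distributions as $\ve\to 0^+$. Hypothesis (i), applied to the $K$-central functions $D_1\psi_\ve$ and $D_2\psi_\ve$, yields
\[
(\psi_\ve*T_{D_1})*(\psi_\ve*T_{D_2})=(\psi_\ve*T_{D_2})*(\psi_\ve*T_{D_1}).
\]
Associativity of convolution with compactly supported distributions, together with the limit $\ve\to 0^+$, forces $T_{D_1}*T_{D_2}=T_{D_2}*T_{D_1}$ as distributions at $e$, and hence $D_1D_2=D_2D_1$ on $\mD^K(G)$ (and by density on all smooth functions).

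For (ii)$\,\Rightarrow\,$(iii), fix an irreducible unitary $(\pi,H_\pi)$ and a $K$-type $\delta$, and write the $K$-isotypic subspace as $H_\pi(\delta)\cong V_\delta\otimes M_\delta$ with $\dim M_\delta=m_\pi(\delta)$. The derived representation $d\pi$ sends $U(\fg)^{\mathrm{Ad}(K)}\cong\DD(G)^K$ to operators of the form $\mathrm{Id}_{V_\delta}\otimes B$ on $H_\pi(\delta)^{\infty}$, and admissibility combined with a Jacobson--Burnside density argument shows that the image in $\mathrm{End}(M_\delta)\cong M_{m_\pi(\delta)}(\CC)$ is the full endomorphism algebra. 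Under (ii) the latter is commutative, which forces $m_\pi(\delta)\le 1$. The main obstacle is precisely this density step: one must verify that $d\pi(U(\fg)^{\mathrm{Ad}(K)})$ exhausts the commutant of $\pi(K)$ on each multiplicity space. For general connected Lie groups one may sidestep the issue by passing to the semidirect product $G'=G\rtimes K$ (with $K$ acting on $G$ by conjugation) and the compact subgroup $K'=\{e\}\times K$: a direct computation identifies the bi-$K'$-invariant convolution algebra on $G'$ with $L^1(G)^K$, reducing the whole proposition to the classical Gelfand-pair theorem for $(G',K')$.
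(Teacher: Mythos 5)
First, note that the paper itself offers no proof of Proposition \ref{strongGelfand}: it is quoted from \cite[Ch.~8]{Wo}, so your attempt is measured against the standard treatment rather than an argument in the text. Your implications (iii)$\Rightarrow$(i) and (i)$\Rightarrow$(ii) are essentially sound: $K$-centrality does place $\pi(f)$ in $\pi(K)'$ (conjugation by elements of the compact $K$ preserves Haar measure), the commutant of $\pi(K)$ is abelian exactly when all $K$-multiplicities are at most one, and the approximate-identity argument with point-supported $\mathrm{Ad}(K)$-invariant distributions is a legitimate way to transfer commutativity from $\mD^K(G)$ under convolution to $\DD(G)^K$. Only the closing remark ``and by density on all smooth functions'' is off ($\mD^K(G)$ is not dense in $C^\infty(G)$), but it is also unnecessary: once $T_{D_1}*T_{D_2}=T_{D_2}*T_{D_1}$ as distributions supported at $e$, the identity $D_iD_jf=f*T_{D_j}*T_{D_i}$ gives $D_1D_2=D_2D_1$ on all smooth functions.

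The genuine gap is in (ii)$\Rightarrow$(iii), and you half-acknowledge it. The assertion that $d\pi\bigl(U(\fg)^{\mathrm{Ad}(K)}\bigr)$ acts on each multiplicity space $M_\delta$ as all of $\mathrm{End}(M_\delta)$ is not justified: for a general connected Lie group $G$ and compact $K\le G$ the representation $\pi$ need not be $K$-admissible, so $M_\delta$ may be infinite dimensional; moreover $d\pi$ only acts on smooth vectors, and no Jacobson--Burnside density theorem applies in this generality --- that surjectivity is precisely the hard content, available for reductive pairs but not a formal consequence of irreducibility. Your proposed sidestep via $G'=G\rtimes K$, $K'=\{e\}\times K$ is indeed the classical reduction, and the identification of the bi-$K'$-invariant convolution algebra on $G'$ with $L^1(G)^K$ is correct; one also checks that the $G'$-invariant differential operators on $G'/K'\simeq G$ are exactly $\DD(G)^K$. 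But invoking the classical Gelfand-pair theorem for $(G',K')$ yields the equivalence of (i), (ii) and the statement that every irreducible unitary representation of $G'$ has at most a one-dimensional space of $K'$-fixed vectors --- a condition on representations of $G'$, not of $G$. To recover (iii) you would still have to relate irreducible representations of $G\rtimes K$ to those of $G$ (a Mackey/Clifford step you do not carry out), so as written the cycle (iii)$\Rightarrow$(i)$\Rightarrow$(ii)$\Rightarrow$(iii) is not closed. The cleanest repair is a direct proof of (i)$\Rightarrow$(iii): for irreducible $\pi$, averaging $T\mapsto\int_K\pi(u)T\pi(u)^{-1}\,du$ is a weakly continuous (on bounded sets) projection of $B(H^\pi)$ onto $\pi(K)'$ carrying $\pi(f)$ to $\pi(f^\natural)$ with $f^\natural\in L^1(G)^K$; since $\pi(L^1(G))$ is weakly dense in $B(H^\pi)$, the commutative algebra $\pi(L^1(G)^K)$ is weakly dense in $\pi(K)'$, so $\pi(K)'$ is abelian and all $K$-multiplicities are at most one. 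With that link supplied (or with the representation-theoretic translation for $G\rtimes K$), your argument becomes complete.
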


The pair $(G,K)$ with $G=SL(2,\RR)$ and $K=SO(2)$ is a strong Gelfand pair. This is easily seen by observing that $\DD(G)^K$ is generated by 
$$
L=-Y_1^2-Y_2^2\quad\text{ and }\quad X\ ,
$$ 
which commute with each other. They are, respectively, the $\frak p$ and the $\frak k$-components of the Casimir operator
$
\Omega=X^2-Y_1^2-Y_2^2.
$

By spherical function we mean a $K$-central function $\zeta$ which   takes the value 1 at the identity element and is an eigenfunction of all $K$-central differential operators. The following general equivalences are well known.

\begin{pro}\quad
Let $(G,K)$ be a strong Gelfand pair, with $G$ connected. The following are equivalent for a function $\zeta$ on $G$:
\begin{enumerate}
\item[\rm(i)] $\zeta$ is spherical;
\item[\rm(ii)] $\zeta$ is  $K$-central and the linear functional on $C_c(G)$
\begin{equation}\label{tildezeta}
\widetilde\zeta(f)=\int_Gf(x)\zeta(x^{-1})\,dx
\end{equation}
 is multiplicative;
\item[\rm(iii)] $\zeta$ satisfies the functional equation
$$
\int_K\zeta(uxu^{-1}y)\,du=\zeta(x)\zeta(y)\ .
$$
\end{enumerate}
\end{pro}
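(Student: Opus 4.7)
The plan is to prove $(\mathrm{iii})\Leftrightarrow(\mathrm{ii})$ by direct integral manipulation and $(\mathrm{iii})\Leftrightarrow(\mathrm{i})$ by applying operators from $\DD(G)^K$ inside the functional equation; I read $(\mathrm{ii})$ as multiplicativity of $\widetilde{\zeta}$ on the commutative convolution subalgebra of $K$-central test functions, the commutativity being exactly Proposition \ref{strongGelfand}(ii).

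For $(\mathrm{iii})\Rightarrow(\mathrm{ii})$ I would start from
\[
\widetilde{\zeta}(f*g)=\int_G\int_G f(y)g(z)\,\zeta(z^{-1}y^{-1})\,dy\,dz
\]
with $K$-central $f,g$. The $K$-centrality of $g$ together with the conjugation-invariance of Haar measure allows the substitution $z\mapsto uzu^{-1}$ followed by averaging in $u\in K$, which replaces $\zeta(z^{-1}y^{-1})$ by $\int_K\zeta(uz^{-1}u^{-1}y^{-1})\,du$; by $(\mathrm{iii})$ this equals $\zeta(z^{-1})\zeta(y^{-1})$, and the double integral factors as $\widetilde{\zeta}(f)\widetilde{\zeta}(g)$. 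For $(\mathrm{ii})\Rightarrow(\mathrm{iii})$ I would run the same calculation in reverse: multiplicativity forces the kernel $\int_K\zeta(uz^{-1}u^{-1}y^{-1})\,du-\zeta(z^{-1})\zeta(y^{-1})$ to annihilate $\int\int f(y)g(z)\,\cdot\,dy\,dz$ for every $K$-central $f,g$; choosing $f,g$ as $K$-central approximate identities at arbitrary $y_0,z_0$ and using the continuity of $\zeta$ then yields the pointwise identity, which is $(\mathrm{iii})$ after a substitution $w\mapsto w^{-1}$ and an appeal to $K$-centrality of $\zeta$.

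For $(\mathrm{iii})\Rightarrow(\mathrm{i})$, setting $y=e$ in $(\mathrm{iii})$ gives $\zeta(x)=\int_K\zeta(uxu^{-1})\,du$, so $\zeta$ equals its own $K$-average and is in particular $K$-central; $x=e$ gives $\zeta(e)=1$ whenever $\zeta\not\equiv 0$. For $D\in\DD(G)^K$, differentiating $(\mathrm{iii})$ in the $y$-variable and pulling $D$ inside the $K$-integral via left-invariance yields
\[
\int_K(D\zeta)(uxu^{-1}y)\,du=\zeta(x)\,(D\zeta)(y);
\]
evaluating at $y=e$ and using the $K$-centrality of $D\zeta$ (since both $D$ and $\zeta$ are $K$-central) collapses the $K$-integral to $(D\zeta)(x)$, producing $D\zeta=\lambda_D\,\zeta$ with $\lambda_D=(D\zeta)(e)$.

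The hardest step is $(\mathrm{i})\Rightarrow(\mathrm{iii})$. For fixed $y\in G$, the function $F(x):=\int_K\zeta(uxu^{-1}y)\,du$ is $K$-central in $x$ (via $u\mapsto uv^{-1}$), is a joint eigenfunction of every $D\in\DD(G)^K$ with the same eigenvalues as $\zeta$ (by left-invariance), and satisfies $F(e)=\zeta(y)$. Concluding $F=\zeta(y)\,\zeta$ amounts to a one-dimensionality statement for the $K$-central joint eigenspace of $\DD(G)^K$ with prescribed eigenvalues, normalized at the identity. I would obtain this uniqueness from the representation-theoretic half of Proposition \ref{strongGelfand}: realize $\zeta$ as a normalized matrix coefficient $\langle\pi(\cdot)v,v\rangle$ with $v$ a unit vector spanning a one-dimensional $K$-isotypic component inside some irreducible unitary $\pi$. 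Then $P_x:=\int_K\pi(u)\pi(x)\pi(u^{-1})\,du$ commutes with $\pi|_K$ and so by multiplicity-freeness preserves $\CC v$, acting there as the scalar $\zeta(x)$; combined with $E_{\CC v}\pi(y)v=\zeta(y)v$, this gives $\int_K\zeta(uxu^{-1}y)\,du=\langle P_x\pi(y)v,v\rangle=\zeta(x)\zeta(y)$.
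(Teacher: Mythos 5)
Your reading of (ii) as multiplicativity on the commutative algebra of $K$-central test functions is the intended one (it is how the paper uses \eqref{tildezeta} right after the statement), and your arguments for (iii)$\Leftrightarrow$(ii) and for (iii)$\Rightarrow$(i) are essentially correct: the substitution $z\mapsto uzu^{-1}$ plus averaging is the standard device, the kernel you need to kill in (ii)$\Rightarrow$(iii) is indeed $K$-conjugation invariant in each variable so testing against $K$-central bumps suffices, and in (iii)$\Rightarrow$(i) the left-invariance of $D$ is applied in the correct ($y$) variable. (The paper itself offers no proof, referring to \cite[Ch.~8]{Wo}, so there is nothing to compare against; minor loose ends such as $\zeta\not\equiv0$ and the smoothness of a continuous solution of the functional equation are of the same level of informality as the statement itself.)

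The genuine gap is in (i)$\Rightarrow$(iii). ``Spherical'' in (i) means only: $K$-central, $\zeta(e)=1$, joint eigenfunction of $\DD(G)^K$; such functions are in general neither bounded nor of positive type. In the very setting of this paper the spherical functions are all the $\zeta_{n,s}$, $s\in\CC$, and by Proposition \ref{pro:Gelf} only a small subfamily is positive definite (many are even unbounded). A diagonal matrix coefficient $\langle\pi(\cdot)v,v\rangle$ of an irreducible \emph{unitary} representation is bounded by $1$ and positive definite, so the realization you invoke exists only for positive-type spherical functions, and Proposition \ref{strongGelfand}(iii), being a statement about unitary representations, cannot supply it in general. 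Thus the crucial one-dimensionality/uniqueness step is proved only for a proper subfamily, and (i)$\Rightarrow$(iii) remains open exactly where it is hard. A secondary flaw in the same step: the claim that $F(x)=\int_K\zeta(uxu^{-1}y)\,du$ is a joint eigenfunction in $x$ ``by left-invariance'' is unjustified, since left-invariant operators do not commute with the right translation by $y$ hidden in the integrand (nor does conjugation by $u\in K$ cure this). The left-invariance argument works in the other variable: for fixed $x$, $G_x(y)=\int_K\zeta(uxu^{-1}y)\,du$ is $K$-central in $y$ (using $K$-centrality of $\zeta$), satisfies $DG_x=\lambda_D G_x$ because the integrand is a left translate of $\zeta$ in $y$, and $G_x(e)=\zeta(x)$; so the correct reduction is to uniqueness, with normalization at $e$, of the $K$-central joint eigenfunction in the $y$-variable. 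Proving that uniqueness for an arbitrary eigenvalue system is the real content of (i)$\Rightarrow$(iii) and needs a non-unitary argument (e.g.\ real-analyticity of eigenfunctions of an elliptic element of $\DD(G)^K$ together with the fact that the eigenvalues determine all derivatives at $e$ of a $K$-central eigenfunction, or, in the concrete case of the paper, the second-order hypergeometric equation underlying \eqref{eq:hyper_form}); that idea is missing from your proposal.
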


The spherical functions for $G=SL(2,\RR)$, $K=SO(2)$, are described as follows. We refer to \cite{L,Kn} for all unproven statements related to representation theory of $G=SL(2,\RR)$.

\begin{pro} 
For $s\in\CC$ and $n\in\frac12\ZZ$ define the following  functions on $G$:
\begin{equation*}
\begin{aligned}
&\alpha_s(n_\xi a_tu_\theta)=e^{st}\ ,\qquad \chi_n(n_\xi a_tu_\theta)=e^{in\theta}\ ,\\
&\zeta_{n,s}(x)=\int_K (\alpha_s\chi_n)(uxu^{-1} )\,du=\fint_\TT (\alpha_s\chi_n)(u_\theta x)e^{-in\theta}\,d\theta\ .
\end{aligned}
\end{equation*}

Then $\zeta_{n,s}$ is spherical and
\begin{equation*}
\Omega\,\zeta_{n,s}=\gamma(s)\,\zeta_{n,s}\ ,\qquad X\zeta_{n,s}=in\,\zeta_{n,s} \ ,\qquad L\,\zeta_{n,s}=\big(\gamma(s)+n^2\big)\zeta_{n,s}\ ,
\end{equation*}
with $\gamma(s)$ as in \eqref{eq:defgamma}.

In particular, $\zeta_{n,s}=\zeta_{n,1-s}$ for every $s$ and, modulo this identity, they  are all the spherical functions.
\end{pro}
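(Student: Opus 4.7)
I would organize the argument into three parts: the $K$-type data ($K$-centrality, normalization, $X\zns=in\zns$), the Casimir eigenvalue, and the symmetry $\zns=\zeta_{n,1-s}$ together with completeness.

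\emph{$K$-type data.} Set $f:=\alpha_s\chi_n$. Direct inspection of the Iwasawa formulas shows $f(gu_\psi)=f(g)e^{in\psi}$, because right-multiplication by $u_\psi$ only shifts the $K$-coordinate in $G=NAK$. Substituting $u=u_\theta$ in $\int_K f(uxu^{-1})\,du$ and invoking this transformation law produces the second integral expression for $\zns$. Averaging over $K$-conjugation makes $\zns$ $K$-central and gives $\zns(e)=f(e)=1$. The same transformation law propagates to $\zns(xu_\psi)=e^{in\psi}\zns(x)$, so that $X\zns=in\,\zns$ (hence $X^2\zns=-n^2\zns$) by differentiating at $\psi=0$.

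\emph{Casimir eigenvalue.} Since $\Omega$ is central in $U(\fg)$, it commutes with the inner automorphisms by elements of $K$, so
\begin{equation*}
\Omega\zns(x)=\int_K(\Omega f)(uxu^{-1})\,du,
\end{equation*}
and the claim reduces to $\Omega f=\gamma(s)f$. In Iwasawa coordinates $g=n_\xi a_tu_\phi$ one has $f(g)=e^{st+in\phi}$; computing the left-invariant vector fields $Y_1,Y_2$ acting on $f$ by expanding $u_\phi\exp(\epsilon Z)=\exp\bigl(\epsilon\,\mathrm{Ad}(u_\phi)Z\bigr)u_\phi$ and using $a_\tau n_\xi a_{-\tau}=n_{\xi e^\tau}$ yields
\begin{equation*}
Y_1 f=(s\sin\phi-in\cos\phi)f,\qquad Y_2 f=(s\cos\phi+in\sin\phi)f.
\end{equation*}
Squaring and adding, the $\sin\phi\cos\phi$ cross terms cancel and the $\phi$-derivatives of the coefficients contribute an extra $-s$, giving $(Y_1^2+Y_2^2)f=(s^2-n^2-s)f$. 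Combined with $X^2f=-n^2f$ and $\Omega=X^2-Y_1^2-Y_2^2$ this yields $\Omega f=s(1-s)f=\gamma(s)f$, hence $\Omega\zns=\gamma(s)\zns$. The identity $L=\Omega-X^2$ then immediately forces $L\zns=(\gamma(s)+n^2)\zns$.

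\emph{Sphericity, symmetry, completeness.} At this point $\zns$ is $K$-central, normalized at $e$, and a joint eigenfunction of the generators $X,L$ of the commutative algebra $\DD(G)^K$, so by definition $\zns$ is spherical. For $\zns=\zeta_{n,1-s}$ both sides are normalized spherical functions with identical eigenvalues for $X$ and $L$ (since $\gamma(s)=\gamma(1-s)$); via \eqref{tildezeta} spherical functions correspond bijectively to characters of $L^1(G)^K$, and two characters that agree on generators of $\DD(G)^K$ must coincide. Completeness --- that every spherical function of $K$-type $n$ is some $\zns$ --- follows by combining item (iii) of Proposition \ref{strongGelfand} (spherical functions of $K$-type $n$ correspond to pairs of an irreducible unitary representation of $G$ with its $K$-type $n$ subspace) with the Bargmann classification of $\widehat{SL(2,\RR)}$: matching the principal, complementary, and discrete series against the integral defining $\zns$ exhausts the list, the two-to-one ambiguity being exactly the involution $s\mapsto 1-s$. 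The delicate step is this last paragraph: the identity $\zns=\zeta_{n,1-s}$ is not transparent from the integral definition and ultimately rests on uniqueness of a spherical function with prescribed character on $\DD(G)^K$, which is a mild form of the Harish-Chandra isomorphism for $(SL(2,\RR),SO(2))$.
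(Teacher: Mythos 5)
Your computational core is correct and, in fact, more detailed than the paper, which simply cites \cite[Prop.\ 1]{Tak_1} for these facts and disposes of the last sentence with the single remark that two spherical functions with the same pair of eigenvalues coincide. The Iwasawa-coordinate formulas $Y_1f=(s\sin\phi-in\cos\phi)f$ and $Y_2f=(s\cos\phi+in\sin\phi)f$ for $f=\alpha_s\chi_n$, the cancellation of the cross terms giving $(Y_1^2+Y_2^2)f=(s^2-n^2-s)f$, the use of $\mathrm{Ad}(K)$-invariance of $\Omega$ to pull the operator under the conjugation average, and the $K$-type argument yielding $X\zns=in\,\zns$ are all sound, and sphericity then follows since $X$ and $L$ generate $\DD(G)^K$.

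The gap is in your final paragraph, on two counts. First, the correspondence \eqref{tildezeta} identifies spherical functions with multiplicative functionals on $C_c(G)$ (equivalently on $\mD^K(G)$), not on $L^1(G)^K$: the proposition allows every $s\in\CC$, so a spherical function may be unbounded and then $\widetilde\zeta$ is not defined on $L^1$; moreover ``two characters that agree on generators of $\DD(G)^K$ must coincide'' conflates characters of the convolution algebra with characters of the algebra of invariant differential operators. What you actually need, both for $\zns=\zeta_{n,1-s}$ and for completeness, is the (standard but nontrivial) uniqueness statement that a $K$-central joint eigenfunction of $\DD(G)^K$ normalized at $e$ is determined by its eigenvalues --- exactly the fact the paper invokes; your closing sentence gestures at it, but the intermediate justification does not establish it. Second, your completeness argument cannot work as stated: Proposition \ref{strongGelfand}(iii) together with the Bargmann classification only produces the spherical functions of positive type (diagonal matrix coefficients of irreducible unitary representations), whereas the claim is that the $\zns$ exhaust \emph{all} spherical functions, including the non--positive-type and unbounded ones. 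The correct route is the one already implicit in your uniqueness remark: if $\zeta$ is any spherical function, then $X\zeta=c\zeta$ gives $\zeta(xu_\theta)=e^{c\theta}\zeta(x)$, and $4\pi$-periodicity forces $c=in$ with $n\in\frac12\ZZ$; the Casimir eigenvalue can be written as $\gamma(s)$ for some $s\in\CC$ because $\gamma$ is onto $\CC$; hence $\zeta$ and $\zns$ are spherical with the same pair of eigenvalues and coincide by the uniqueness fact, the same fact giving $\zns=\zeta_{n,1-s}$ from $\gamma(s)=\gamma(1-s)$. With that replacement your proof is complete at the same level of rigor as the paper.
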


The proof can  be found in \cite[Prop. 1]{Tak_1}. The last part of the statement follows from the fact that two spherical functions with the same pair of eigenvalues coincide. 

The bounded spherical functions determine, via \eqref{tildezeta}, the multiplicative functionals on $L^1(G)^K$, i.e., its Gelfand spectrum, denoted by $\Sigma$. The  characterization of bounded spherical functions is the first part of the next statement. Though this is known, a sketch of the proof of the first part is contained in the remark following the proof of Lemma \ref{lem:spher_decom_large_t}.

\begin{pro}
	\label{pro:Gelf}
The Gelfand spectrum $\Sigma$ consists of the spherical functions $\zeta_{n,s}$ with
\begin{itemize}
\item $0\le\Real s\le1$,
\item $s\in\big\{-|n|+1,-|n|+2,\dots,|n|\big\}$.
\end{itemize}

A spherical function $\zeta_{n,s}$ is of positive type if and only if one of the following conditions is satisfied:
\begin{itemize}
\item $\Real s=1/2$,
\item $s\in[0,1]\cup\big\{-|n|+1,-|n|+2,\dots,|n|\big\}$.
\end{itemize}
\end{pro}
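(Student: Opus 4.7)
Since $\zns$ is $K$-central and $\zeta_{n,s}=\zeta_{n,1-s}$, it suffices to analyze $\zns(a_t)$ for $t\geq 0$ and $\Real s\geq 1/2$. The radial part of the eigenvalue equation $L\zns=(\gamma(s)+n^2)\zns$ on $A^+$ is a second-order hypergeometric-type ODE whose indicial exponents at $t\to+\infty$ are $-s$ and $s-1$. One therefore expects an asymptotic expansion
\[
\zns(a_t)=A_n(s)\,e^{(s-1)t}\bigl(1+o(1)\bigr)+A_n(1-s)\,e^{-st}\bigl(1+o(1)\bigr),\qquad t\to\infty,
\]
with a $K$-type-dependent $c$-function $A_n(s)$. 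Extracting $A_n(s)$ from the integral $\zns(a_t)=\fint_\TT(\alpha_s\chi_n)(u_\theta a_t)\,e^{-in\theta}\,d\theta$ via the $NAK$ decomposition of $u_\theta a_t$ expresses it as a quotient of Gamma functions, from which one reads off that $A_n(1-s)$ vanishes precisely at $s\in\{1,2,\dots,|n|\}$.

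For the Gelfand-spectrum inclusion, I would argue as follows. In the strip $1/2\leq \Real s\leq 1$ both exponentials $e^{(s-1)t}$ and $e^{-st}$ are bounded and the subleading terms decay, so $\zns$ is bounded. For $\Real s>1$ the leading term grows unless its coefficient $A_n(1-s)$ vanishes, which by the previous paragraph forces $s\in\{1,2,\dots,|n|\}$; conversely, at such discrete values one verifies via the ODE that the remaining solution decays at infinity, so $\zns$ is bounded. Undoing the symmetry $s\mapsto 1-s$ yields the full list $\Real s\in[0,1]$ together with $s\in\{-|n|+1,\dots,|n|\}$.

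For the positive-type characterization I would invoke the classical fact that, for a strong Gelfand pair, a spherical function is of positive type iff it is a $K$-isotypic matrix coefficient of an irreducible unitary representation of $G$ (apply GNS to the functional $\widetilde\zeta$ in \eqref{tildezeta} and use Proposition \ref{strongGelfand}(iii) for irreducibility and multiplicity-one). Combining this with Bargmann's classification of the unitary dual of $SL(2,\RR)$---principal series at $\Real s=1/2$, complementary series at $s\in(0,1)$, the trivial and limit representations at $s\in\{0,1\}$, and (limits of) discrete series parametrized by $s\in\frac12\NN$ with $K$-types $|n|\geq s\geq 1$---and bookkeeping which $(n,s)$ actually occur yields exactly $\Real s=1/2$ together with $s\in[0,1]\cup\{-|n|+1,\dots,|n|\}$.

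The main obstacle is the precise computation of the $c$-function $A_n(s)$ and of its zero locus: this is a $K$-type-twisted variant of the classical Harish-Chandra analysis, and it relies on an explicit Jacobi-function expansion of $\zns(a_t)$. This is exactly what Lemmas \ref{lem:spher_decom_small_t} and \ref{lem:spher_decom_large_t} cited in the introduction accomplish, which is why the authors' remark after Lemma \ref{lem:spher_decom_large_t} is enough to sketch the first part of the proposition.
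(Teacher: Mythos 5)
Your proposal follows essentially the same route as the paper: the paper's own justification (the remark after Lemma \ref{lem:spher_decom_large_t}) likewise obtains the first part from the large-$t$ behaviour $\lim_{t\to\infty}e^{st}\zeta_{n,s}(a_t)=c_{|n|}(i(s-1/2))$ coming from the global expansion, reading off the discrete points as the zeros of the explicit Gamma-quotient $c$-function \eqref{eq:cndef}, while the positive-type characterization is, as in your sketch, taken from the representation-theoretic classification (the paper simply refers to \cite{L,Kn} and to the matrix-coefficient description in Section 3.2). The only blemish is a labelling slip: relative to your displayed expansion, for $\Real s>1$ the growing term $e^{(s-1)t}$ has coefficient $A_n(s)$ (not $A_n(1-s)$), and it is this coefficient whose vanishing at the points of $D_n$ (spaced by integers, so for half-integer $n$ the set is $\{s:\,s-|n|\in\ZZ,\ 1\le s\le |n|\}$ rather than $\{1,2,\dots,|n|\}$) gives the discrete part; this does not affect the argument.
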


The map $\zeta_{n,s}\longmapsto \big(n,\gamma(s)+n^2\big)$ establishes a 1-to-1 correspondence between $\Sigma$ and the set 
\begin{equation}\label{Sigma}
\Delta=\big\{\big(n,\gamma(s)+n^2\big):\zeta_{n,s}\in \Sigma\big\}\subset \RR\times\CC\ .
\end{equation}
By \cite{FeR}, this map is a homeomorphism. 

The spherical transform of a $K$-central integrable function $f$ on $G$ is defined on $\Sigma$ as
$$
\hat f(\zeta_{n,s})=\int_Gf(x)\zeta_{n,s}(x^{-1})\,dx\ .
$$
We will write $\hat f(n,s)$ instead of $\hat f(\zeta_{n,s})$, for
$$
(n,s)\in\big\{(n,s):s\in S_{1/2}\big\}\cup\big\{(n,s):s\in D_n\big\}\ .
$$
The function $\hat f(n,s)$ is continuous, holomorphic on $ \Int S_{1/2}$ and satisfies the identity
$$
\hat f(n,s)=\hat f(n,1-s)\ .
$$

\subsection{Connections with representation theory and Plancherel-Godement formula}\quad

Consistently with Proposition \ref{strongGelfand}(iii), for each irreducible unitary representation $\pi$ of $G$, the representation space $H^\pi$ admits an orthonormal basis $\{v^\pi_n\}_{n\in E^\pi}$, where $E^\pi\subseteq \frac12\ZZ$ and 
\begin{equation*}
\pi(u_\theta)v^\pi_n=e^{in\theta}v^\pi_n\ .
\end{equation*}
The diagonal matrix coefficients $\eta^\pi_{nn}$, where
\begin{equation*}
\eta^\pi_{jk}(x)=\langle\pi(x)v^\pi_k,v^\pi_j\rangle
\end{equation*}
give all the spherical functions of positive type, with eigenvalues
$$
d\pi(-iX)\eta^\pi_{nn}=n\,\eta^\pi_{nn}\ , \qquad d\pi(\Omega)\eta^\pi_{nn}=\omega(\pi)\,\eta^\pi_{nn}\ ,
$$
where $\omega (\pi)$ is the scalar such that $d\pi(\Omega)=\omega(\pi)I$.

Restricting the Plancherel and inversion formulas to $K$-central functions, we obtain the corresponding formulas for the spherical transform. We recall that the Plancherel measure is concentrated on the representations belonging to the unitary principal series or to the discrete series. 

The first class of representations are usually parametrized by an imaginary parameter $i\lambda$ with $\lambda>0$ and a signum $\pm$. We choose the parameters so that
$$
\omega(\pi_{i\lambda}^\pm)=\gamma \Big(\frac12+ i\lambda\Big)=\lambda^2+\frac14\ , \qquad E^{\pi_{i\lambda}^+}=\ZZ\ ,\quad E^{\pi_{i\lambda}^-}=\ZZ+\frac12\ .
$$

We  parametrize the second class of representations by $s\in\frac12\ZZ$, $s\ge1$ and a signum $\pm$ so that
$$
\omega(\pi_s^\pm)=\gamma(s)=s-s^2\ , \qquad E^{\pi_s^+}=s+\NN\ ,\quad E^{\pi_s^-}=-s-\NN\ .
$$

We define $\Sigma'\subset\Sigma$ as the set of spherical functions which are diagonal entries of representations in these two classes and $\Delta'\subset \Delta$ according to \eqref{Sigma}, i.e., 
$$
\Delta'=\big\{\big(\lambda^2+\frac14,n\big):n\in\frac12\ZZ\,,\,\lambda>0\big\}\cup \big\{(s-s^2,n):s\in D_n\big\}\ .
$$
Setting 
$$
\pi(f)=\int_G f(x)\pi(x^{-1})\,dx\ ,
$$
 and 
\begin{equation}
\label{eq:nupmn}
\nu^+(\lambda)=\lambda\tanh (\pi\lambda)\ ,\qquad \nu^-(\lambda)=\lambda\coth (\pi\lambda)\ ,
\end{equation}
the Plancherel formula is 

\begin{equation}\label{plancherel}
\begin{aligned}
\|f\|_2^2&=\frac1{2\pi}\int_0^{+\infty}\big\|\pi_{i\lambda}^+(f)\big\|_{HS}^2\,\nu^+(\lambda)\,d\lambda\\
&\quad+\frac1{2\pi}\int_0^{+\infty}\big\|\pi_{i\lambda}^-(f)\big\|_{HS}^2\,\nu^-(\lambda)\,d\lambda\\
&\quad +\frac1{2\pi}\sum_{s\in\frac12\NN+1}\Big(s-\frac12\Big)\Big(\big\|\pi_s^+(f)\big\|_{HS}^2+\big\|\pi_s^-(f)\big\|_{HS}^2\Big)\\
&\overset{\rm def}= \int_{\Delta'}\big\|\pi(f)\big\|_{HS}^2\,d\mu(\pi)\ .
\end{aligned}
\end{equation}

If $f$ is $K$-central, $\pi(f)$ is diagonal in the basis $\{v_n^\pi\}_{n\in E^\pi}$. Hence, regrouping the different terms appropriately, we obtain the following formulae.

\begin{pro}\label{central formulas}
\quad
\begin{enumerate}
\item[\rm(i)] {\rm($K$-central Plancherel-Godement formula)}  For $f\in (L^1\cap L^2)(G)^K$, we have the identity
\begin{equation}\label{central-plancherel}
\begin{aligned}
\|f\|_2^2&=\frac1{2\pi}\sum_{n\in\ZZ}\int_0^{+\infty}\big|\hat f(\zeta_{n,\frac12+i\lambda})\big|^2\,\nu^+(\lambda)\,d\lambda\\
&\quad+\frac1{2\pi}\sum_{n\in\frac12+\ZZ}\int_0^{+\infty}\big|\hat f(\zeta_{n,\frac12+i\lambda})\big|^2\,\nu^-(\lambda)\,d\lambda\\
&\quad +\frac1{2\pi}\sum_{n\in\frac12\ZZ}\,\sum_{s\in D_n}\Big(s-\frac12\Big)\big|\hat f(\zeta_{n,s})\big|^2\\\
&\quad\overset{\rm def}=\int_{\Delta_+} \hat f\,d\tilde\nu\ ,
\end{aligned}
\end{equation}
and the spherical transforms extends to an isometry of $L^2(G)^K$ onto $ L^2(\Delta_+,\tilde\nu)$.
\item[\rm(ii)] {\rm($K$-central inversion formula)}. For $f\in \mD^K(G)$, we have the identity
\begin{equation}\label{central-inversion}
\begin{aligned}
f(x)&=\frac1{2\pi}\sum_{n\in\ZZ}\int_0^{+\infty}\hat f(\zeta_{n,\frac12+i\lambda})\zeta_{n,\frac12+i\lambda}(x)\,\nu^+(\lambda)\,d\lambda\\
&\quad+\frac1{2\pi}\sum_{n\in\frac12+\ZZ}\int_0^{+\infty}\hat f(\zeta_{n,\frac12+i\lambda})\zeta_{n,\frac12+i\lambda}(x)\,\nu^-(\lambda)\,d\lambda\\
&\quad +\frac1{2\pi}\sum_{n\in\frac12\ZZ}\,\sum_{p\in D_n}\Big(s-\frac12\Big)\hat f(\zeta_{n,s})\zeta_{n,s}(x)\\
&=\int_{\Delta_+}\hat f(\zeta)\zeta(x)\,d\tilde\nu(\zeta)\ .
\end{aligned}
\end{equation}
\end{enumerate}
\end{pro}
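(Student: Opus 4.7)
The plan is to derive both formulas directly from the operator-valued Plancherel and inversion identities \eqref{plancherel} by exploiting the fact that, on $K$-central functions, every operator $\pi(f)$ is diagonal in the $K$-adapted basis $\{v_n^\pi\}_{n\in E^\pi}$. The key preliminary observation is as follows: if $f\in L^1(G)^K$, then $K$-centrality of $f$ combined with unimodularity of the Haar measure on $K$ forces $\pi(f)$ to commute with $\pi(u)$ for every $u\in K$. Since the vectors $v_n^\pi$ lie in pairwise distinct $K$-isotypic lines (the characters $e^{in\theta}$ separate the elements of $E^\pi$), Schur's lemma then forces $\pi(f)$ to be diagonal in this basis, with
\[
\langle \pi(f)v_n^\pi,v_n^\pi\rangle=\int_G f(x)\,\eta^\pi_{nn}(x^{-1})\,dx=\hat f(\zeta_{n,s_\pi}),
\]
where $s_\pi$ is determined by $\omega(\pi)=\gamma(s_\pi)$ and the identification $\eta^\pi_{nn}=\zeta_{n,s_\pi}$ follows from uniqueness of spherical functions with a prescribed pair of eigenvalues.

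For part (i), I would plug the identity $\|\pi(f)\|_{HS}^2=\sum_{n\in E^\pi}|\hat f(\zeta_{n,s_\pi})|^2$ into \eqref{plancherel}. The two principal-series terms are indexed so that the internal Hilbert--Schmidt sum ranges over $\ZZ$, respectively $\tfrac12+\ZZ$, matching the first two lines of \eqref{central-plancherel} after exchanging the sum and the $\lambda$-integral. For the discrete part, I would swap the outer sum over $s\in\tfrac12\NN+1$ with the internal sum over $n\in\pm(s+\NN)$; the reindexed set $\{(n,s):s\in\tfrac12\NN+1,\,|n|\ge s,\,|n|-s\in\NN\}$ coincides with $\{(n,s):n\in\tfrac12\ZZ,\,s\in D_n\}$ directly from the definition of $D_n$, giving the third line of \eqref{central-plancherel}. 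This proves the formula first on $\mD^K(G)$ and then, by density, on $(L^1\cap L^2)(G)^K$; the spherical transform extends by continuity to an isometry $L^2(G)^K\to L^2(\Delta_+,\tilde\nu)$, whose surjectivity follows from the surjectivity in the full operator-valued Plancherel theorem restricted to diagonal Hilbert--Schmidt operators on each $H^\pi$.

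For the inversion formula (ii), I would start from the operator-valued inversion $f(x)=\int_{\Delta'}\mathrm{tr}(\pi(f)\pi(x))\,d\mu(\pi)$, valid for $f\in\mD^K(G)$. Diagonality of $\pi(f)$ collapses the trace to
\[
\mathrm{tr}\bigl(\pi(f)\pi(x)\bigr)=\sum_{n\in E^\pi}\hat f(\zeta_{n,s_\pi})\,\langle\pi(x)v_n^\pi,v_n^\pi\rangle=\sum_{n\in E^\pi}\hat f(\zeta_{n,s_\pi})\,\zeta_{n,s_\pi}(x),
\]
and the very same regrouping of principal and discrete contributions as in (i) then yields \eqref{central-inversion}. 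Absolute convergence of all sums and integrals is easy to guarantee for $f\in\mD^K(G)$ from rapid decay of $\hat f$. The step I expect to be the main source of friction is the discrete-series bookkeeping: one has to keep track of how the labelling of the representations $\pi_s^\pm$ (by $s\in\tfrac12\NN+1$ and a sign) matches the labelling of the spherical functions (by pairs $(n,s)$ with $s\in D_n$), and to verify that the Fubini-type interchange of the $s$- and $n$-sums is legitimate; both reduce to careful unpacking of the definition of $D_n$ together with the explicit $K$-spectra $E^{\pi_s^\pm}=\pm(s+\NN)$.
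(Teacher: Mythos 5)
Your proposal is correct and follows essentially the same route as the paper, which obtains Proposition \ref{central formulas} precisely by restricting the operator-valued Plancherel and inversion formulas \eqref{plancherel} to $K$-central $f$, using that $\pi(f)$ is then diagonal in the basis $\{v_n^\pi\}_{n\in E^\pi}$ with entries $\hat f(\zeta_{n,s_\pi})$, and regrouping the principal-series and discrete-series terms. Your additional details (Schur's lemma for diagonality and the reindexing of $\{(s,n):s\in\tfrac12\NN+1,\ n\in\pm(s+\NN)\}$ as $\{(n,s):n\in\tfrac12\ZZ,\ s\in D_n\}$) simply make explicit what the paper leaves as a brief remark before the statement.
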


%

\subsection{Restriction of the spherical transform to $K$-types}\label{subsec-Ktypes}\quad

For $n\in\frac12\ZZ$, we denote by $V_n$ the space of distributions $\Phi$ on $G$ of $K$-type $n$, i.e., such that
$$
R_{u_\theta}\Phi=e^{in\theta}\Phi\ ,
$$
and by $A_n$ the space of distributions in $V_n$ which are $K$-central, i.e., which satisfy the identity
$$
L_{u_{-\psi}}R_{u_\theta}\Phi=e^{in(\psi+\theta)}\Phi\ .
$$
Here $R$ and $L$ denote the right  and left regular representation respectively.

We also set $V_n^p=V_n\cap L^p$, $A_n^p=A_n\cap L^p$.
In particular, $V_n^2$ is the eigenspace in $L^2$ of $-iX$ relative to the eigenvalue $n$ and
$$
L^2(G)={\sum_{n\in\frac12\ZZ}}^{\!\!\oplus}\, V_n^2\ .
$$

The orthogonal projection $\mP_n$ of $L^2$ onto $V_n^2$, given by the formula
\begin{equation*}
\mP_n(f)(g u_{\varphi})=\fint_{\TT}f(gu_{\theta})e^{-in\theta}\,d\theta\,e^{in\varphi},\qquad g\in G\ ,\varphi \in \TT,
\end{equation*}
extends to general distributions, mapping $\mD'(G)$ onto $V_n$. If $\Phi$ is a $K$-central distribution, then 
$\mP_n(\Phi)\in A_n$. Moreover, $\mP_n$ is a contraction of all $L^p$ spaces.

For each $n\in \frac12 \ZZ$ the space $A^1_n$ is an ideal of $L^1(G)^K$. Its spectrum $\Sigma_n$ consists of the spherical functions $\zeta_{n,s}\in\Sigma$, i.e., those which are bounded and have eigenvalue $in$ relative to $X$. In accordance with \eqref{Sigma}, $\Sigma_n$ is homeomorphic to
\begin{equation}\label{Delta_n}
\Delta_n=\big\{\gamma(s):\zeta_{n,s}\in\Sigma_n\big\}=\big\{\gamma(s)+n^2:s\in S_{1/2}\cup D_n\big\}\ .
\end{equation}

\section{Estimates for spherical functions}\label{sec-estimates}

In this section we prove estimates for spherical functions $\zns(a_t)$ that will be needed later. Throughout the section we fix $n\in \ZZ/2.$ Our analysis will be based on considering separately $s\in D_n$ (discrete part, see Lemma \ref{lem:spher_demi-entier}) and $s\in S_{1/2}$ (continuous part). For $s\in S_{1/2}$ we will prove two estimates according to whether $t$ is small (see the continuous-local expansion in Lemma \ref{lem:spher_decom_small_t}) or $t$ is large (see the continuous-global expansion in Lemma \ref{lem:spher_decom_large_t}). We are able to obtain bounds that grow  at most polynomially in $n$ for both the discrete and continuous-local parts.

 Let $F(a,b,c;z)$ be the hypergeometric function, see e.g. \cite[15.2.1, p.\ 384]{NIST}. 
Using the computations in Takahashi \cite[eq. 2.19]{Tak_1} and the symmetry  $F(a,b,c;z)=F(b,a,c;z)$ we obtain, for $s\in S_{1/2}\cup D_n,$ 
\begin{equation}
\label{eq:hyper_form}
\zns(a_t)=(\cosh t/2)^{-2s} F(s-n,s+n,1,\tanh^2 t/2)=\zeta_{-n,s}(a_t),\qquad t\in \RR.
\end{equation} 
Note that the spherical function $\zns(g)$ considered by Takahshi coincides with $\zeta_{n,s}(g^{-1})$ in our notation, however this has of course no impact on \eqref{eq:hyper_form}. Using \cite[Remarque 2, p.\ 69]{Tak_1} the formula \eqref{eq:hyper_form} can be also written as
\begin{equation}
\label{eq:Tsche_form_0}
\zns(a_t)=\frac1{2\pi}\int_{-\pi}^{\pi}\bigg(\frac{\cosh t/2+e^{-i\theta}\sinh t/2}{\big|\cosh t/2+e^{-i\theta}\sinh t/2 \big|}\bigg)^{2n}\,\frac{d\theta}{(\cosh t+\sinh t\cos\theta)^s},
\end{equation}
or
\begin{equation*}
\zns(a_t)=\frac1{2\pi}\int_{-\pi}^{\pi}T_{2n}\bigg(\frac{\cosh t/2+\sinh t/2\cos\theta}{\sqrt{\cosh t+\sinh t\cos\theta}}\bigg)\,\frac{d\theta}{(\cosh t+\sinh t\cos\theta)^s},
\end{equation*}
where $T_{2n}(x)$ is the Tschebyshev polynomial defined by $T_{2n}(\cos x)=\cos (2nx).$ 

Note first a bound that follows directly from comparison with $\zeta_{0,\Real s}.$ 
	\begin{lem}
		\label{cor:large_t}
	Fix $n\in\frac12 \ZZ.$ Then for fixed $t\in \RR$ the function $ \zns(a_t)$ is holomorphic on $\Int S_{1/2}$ and it holds 
\begin{equation}
\label{eq:nto0}
|\zns(a_t)|\le \zeta_{0,\Real s}(a_t).
\end{equation}	
Moreover, for $\ve>0$ we have 
\begin{equation}
\label{eq:nt1}
|\zns(a_t)|\le C_{\ve}\,(1+|s|)^{-1/2}\,e^{-t(1/2-|\Real s -1/2|)},\qquad t\ge 1/2,\quad s\in S_{1/2-\ve}. 
\end{equation}
Consequently, $\zns\in L^p$ whenever $p>2$ and $s\in S_{1/2-1/p}.$
\end{lem}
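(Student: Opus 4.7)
The plan is to derive all three claims from the integral representation \eqref{eq:Tsche_form_0}.

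For holomorphicity and \eqref{eq:nto0}, I first observe that
$$
\cosh t+\sinh t\cos\theta=e^t\cos^2(\theta/2)+e^{-t}\sin^2(\theta/2)\ge e^{-|t|}>0,\qquad t\in\RR,\ \theta\in[-\pi,\pi],
$$
so for each fixed $\theta$ the map $s\mapsto(\cosh t+\sinh t\cos\theta)^{-s}$ is entire, with $s$-derivatives uniformly bounded in $\theta$ on compact subsets of $\CC$. The angular factor $\big(\tfrac{\cosh(t/2)+e^{-i\theta}\sinh(t/2)}{|\cosh(t/2)+e^{-i\theta}\sinh(t/2)|}\big)^{2n}$ has modulus one, so differentiation under the integral sign in \eqref{eq:Tsche_form_0} yields that $s\mapsto\zns(a_t)$ is entire on $\CC$; in particular, it is holomorphic on $\Int S_{1/2}$. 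Taking absolute values inside the integral and comparing with \eqref{eq:Tsche_form_0} at $n=0$ and exponent $\Real s$ gives \eqref{eq:nto0}.

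For the decay estimate \eqref{eq:nt1}, I treat \eqref{eq:Tsche_form_0} as an oscillatory integral. Writing $A(t,\theta):=\cosh t+\sinh t\cos\theta$, one has $A^{-s}=A^{-\Real s}e^{-i\,\Ima(s)\,\log A}$, and the phase $\varphi(\theta):=-\log A(t,\theta)$ has two non-degenerate critical points: $\theta=0$ (where $A=e^t$, $|\varphi''|\asymp 1$) and $\theta=\pi$ (where $A=e^{-t}$, $|\varphi''|\asymp e^{2t}$ for $t\ge 1/2$). Stationary phase at $\theta=0$ contributes $\lesssim|\Ima s|^{-1/2}e^{-\Real s\,t}$; at $\theta=\pi$, the growth of $A^{-\Real s}=e^{\Real s\,t}$ is absorbed by $|\varphi''|^{-1/2}\asymp e^{-t}$, giving $\lesssim|\Ima s|^{-1/2}e^{(\Real s-1)t}$. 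Both contributions are dominated by $|\Ima s|^{-1/2}e^{-t(1/2-|\Real s-1/2|)}$, and integration by parts in $\theta$ away from the critical points produces arbitrarily many powers of $|\Ima s|^{-1}$. For $|\Ima s|$ bounded, I fall back on \eqref{eq:nto0} combined with a direct estimate on $\zeta_{0,\Real s}(a_t)$, absorbing the constant into $C_\ve$. Together these cases yield \eqref{eq:nt1}.

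For the $L^p$-claim, $K$-centrality of $\zns$ together with $X\zns=in\,\zns$ gives $\zns(u_\psi a_t u_\theta)=e^{in(\psi+\theta)}\zns(a_t)$, so \eqref{eq:Cartan} reduces to $\|\zns\|_p^p=\int_0^\infty|\zns(a_t)|^p\sinh t\,dt$. The contribution from $[0,1/2]$ is finite by \eqref{eq:nto0} and the continuity of $\zeta_{0,\Real s}(a_t)$. For $t\ge 1/2$, the bound \eqref{eq:nt1} with $\sinh t\lesssim e^t$ reduces the tail to $\int_{1/2}^\infty e^{t-pt(1/2-|\Real s-1/2|)}\,dt$, finite precisely when $p(1/2-|\Real s-1/2|)>1$, i.e., for $p>2$ and $s\in S_{1/2-1/p}$.

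The hard part will be making the stationary-phase argument uniform on $s\in S_{1/2-\ve}$: one has to balance the degenerate scaling $|\varphi''(\pi)|\asymp e^{2t}$ against the growth of $A^{-\Real s}$ at $\theta=\pi$, and carefully handle the transition between the oscillatory regime (large $|\Ima s|$) and the trivial regime (small $|\Ima s|$), where the logarithmic singularity of $F(s-n,s+n;1;z)$ at $z=1$ becomes relevant.
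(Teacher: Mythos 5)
Your treatment of the holomorphy, of \eqref{eq:nto0} (take absolute values in \eqref{eq:Tsche_form_0}; the angular factor is unimodular and $|A^{-s}|=A^{-\Real s}$), and of the reduction of the $L^p$ claim via $K$-centrality and \eqref{eq:Cartan} is correct and is exactly what the paper does for those parts. One small point there: your own computation requires the strict inequality $p\,(1/2-|\Real s-1/2|)>1$, so it gives $\zns\in L^p$ for $s$ in the interior of $S_{1/2-1/p}$ (which is all that is used later), not on the closed strip as glossed.

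The genuine gap is in your stationary-phase derivation of \eqref{eq:nt1}. The factor $\bigl(\tfrac{\cosh(t/2)+e^{-i\theta}\sinh(t/2)}{|\cosh(t/2)+e^{-i\theta}\sinh(t/2)|}\bigr)^{2n}$ cannot be treated as an inert amplitude merely because it has modulus one. Writing it as $e^{2in\arg w(\theta)}$ with $w(\theta)=\cosh(t/2)+e^{-i\theta}\sinh(t/2)$, one has $\tfrac{d}{d\theta}\arg w(\theta)=\Ima\bigl(w'(\theta)/w(\theta)\bigr)$, and at $\theta=\pi$ this is of size $e^{t}$, since $w(\pi)=e^{-t/2}$ while $|w'(\pi)|=\sinh(t/2)$. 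So for $n\neq0$ the ``amplitude'' oscillates near $\theta=\pi$ with frequency of order $|n|e^{t}$: both your stationary-phase main-term computation and your ``integration by parts away from the critical points'' differentiate this factor, and the resulting constants blow up in $n$ and in $t$ (the estimate must hold uniformly in $t\ge1/2$, and with a constant $C_\ve$). To do it honestly you would have to put $2n\arg w(\theta)$ into the phase, which changes the critical-point structure (when $|n|$ is comparable to $|\Ima s|$ the stationary points near $\theta=\pi$ interact), i.e.\ the $n$--$s$ interaction is exactly what your scheme suppresses. In addition, even for $n=0$, the amplitude $A^{-\Real s}$ and the phase $\log A$ change character on the scale $|\theta-\pi|\sim e^{-t}$, so the standard stationary-phase remainders are not uniform in $t$; you flag this yourself as ``the hard part'', but that uniformity is precisely the content of \eqref{eq:nt1}, so the proposal stops short of a proof.

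For comparison, the paper does no oscillatory analysis here at all: it deduces \eqref{eq:nt1} from \eqref{eq:nto0} together with known bounds for the real-parameter spherical function $\zeta_{0,\Real s}$ — a two-line reduction that yields the exponential factor (and the $L^p$ membership, which is all that is used later); note that this comparison cannot by itself produce the $(1+|s|)^{-1/2}$ decay, which depends on $\Ima s$, so the refinement you are aiming at is exactly the part where extra input is needed. The robust way to get that refinement (with $s$-derivatives) is the Harish-Chandra--type expansion proved later in Lemma \ref{lem:spher_decom_large_t}, where the asymptotics come from the ODE/recursion and the $c_n$-function, at the price of $n$-dependent constants — not from a direct stationary-phase attack on \eqref{eq:Tsche_form_0}.
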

\begin{proof}
The inequality \eqref{eq:nto0} follows directly from \eqref{eq:Tsche_form_0}. Then \eqref{eq:nt1} is a consequence of \eqref{eq:nto0} and known estimates for the spherical function $\zeta_{0,\Real s}.$
\end{proof}
\begin{remark}
Lemma \ref{cor:large_t} comes in handy when we require pointwise control of $\zns.$ It is not so useful when we need more information such as an estimate for the derivative in $s.$ For instance it is well known that, for $j\in \NN,$
\begin{equation}
\label{eq:globz0est}
\big|\partial_s^j \big(e^{ts}\zeta_{0,s}(a_t)\big)\big|\lesssim_{\ve} C_{j}\, (1+|s|)^{-1/2-j}, \qquad t\ge 1/2,\quad \ve<\Real s\le 1/2.\end{equation}
The estimate \eqref{eq:globz0est} is essentially all that is needed in \cite{A1} and \cite{StTo_1} to treat the global part of the kernel of a spherical multiplier on the symmetric space $V_0$.
However, deducing such an estimate for $\zns$ in place of $\zeta_{0,s}$ does not seem possible from \eqref{eq:globz0est}. Later on using Lemma \ref{lem:spher_decom_large_t} we shall be able to deduce an estimate of the form
\begin{equation}
\label{eq:globznest}\big|\partial_s^j \big(e^{ts}\zeta_{n,s}(a_t)\big)\big|\lesssim_{\ve} C_{n,j}\, (1+|s|)^{-1/2-j}, \qquad t\ge 1/2,\quad \ve<\Real s\le 1/2,\end{equation}
at the price of a large in $n$ constant $C_{n,j}.$
\end{remark}

We now focus on $\zns$ for $s\in D_n.$ To this end we define
$$C_{n,s}:=\frac{2^{2s}\Gamma(|n|+s)}{\Gamma(|n|-s+1)\Gamma(2s)}.$$
\begin{lem}
	\label{lem:spher_demi-entier}
Let $s\in D_n$ and take $g=\ut a_t \uph,$ where $t\in [0,\infty],$ and  $\theta,\varphi \in \TT.$ Then the spherical function satisfies
	\begin{equation} \label{eq:spher_demi-entier}|\zns(g)|\leq C\,\min\bigg(C_{n,s}\,e^{-s|t|},1\bigg).\end{equation}
	Thus, if $s\in D_n$ then  $\zns\in L^q(G),$ for every $q>1,$
	and
	\begin{equation}
	\label{eq:spher_demi_entier_norm_est}
	\|\zns\|_{L^q(G)}\le C_q \,(C_{n,s})^{1/s}\le C_q\, (1+|n|).
	\end{equation}
\end{lem}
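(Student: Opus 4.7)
My plan is to establish the pointwise estimate \eqref{eq:spher_demi-entier} first and then derive the $L^q$ bound by integrating in Cartan coordinates. The pointwise estimate splits naturally into two parts: the trivial bound $|\zns|\le 1$ from unitarity, and the exponentially-decaying bound $|\zns(a_t)|\le C_{n,s}\,e^{-s|t|}$ via a Jacobi polynomial identification.

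I will first reduce to bounding $|\zns(a_t)|$. For $s\in D_n$, Proposition \ref{pro:Gelf} identifies $\zns$ as a diagonal matrix coefficient $\zns(g)=\langle\pi_s^\pm(g)v_n,v_n\rangle$ of a discrete-series representation with unit $K$-weight-$n$ vector $v_n$. Since $\pi_s^\pm(u_\theta)v_n=e^{in\theta}v_n$, unitarity of $\pi_s^\pm$ and Cauchy--Schwarz give $|\zns(\ut a_t\uph)|=|\zns(a_t)|\le 1$, yielding the ``$1$'' part of \eqref{eq:spher_demi-entier}. For the exponential bound I start from \eqref{eq:hyper_form}: $\zns(a_t)=(\cosh t/2)^{-2s}\,F(s-n,s+n,1;\tanh^2 t/2)$. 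Assuming WLOG $n>0$ via the symmetry $\zns=\zeta_{-n,s}$, the parameter $s-n$ is a non-positive integer (since $s\in D_n$), so the hypergeometric function terminates and equals a Jacobi polynomial, $F(s-n,s+n,1;z)=P_{n-s}^{(0,2s-1)}(1-2z)$. Applying Szeg\H{o}'s theorem on Jacobi polynomials (since $\beta=2s-1>\alpha=0$, the maximum of $|P_m^{(0,2s-1)}|$ on $[-1,1]$ is attained at $x=-1$ with value $\binom{n+s-1}{n-s}$), I obtain $|F(s-n,s+n,1;\tanh^2 t/2)|\le\binom{n+s-1}{n-s}$ for all $t$. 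Combining with the elementary estimate $(\cosh t/2)^{-2s}\le 2^{2s}e^{-st}$ for $t\ge 0$ then yields the exponential bound and finishes \eqref{eq:spher_demi-entier}.

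For the $L^q$ estimate, the $K$-reduction above together with \eqref{eq:Cartan} gives $\|\zns\|_q^q\lesssim\int_0^\infty|\zns(a_t)|^q\sinh t\,dt$. I will split the integral at $t_0:=s^{-1}\log C_{n,s}$: the contribution from $[0,t_0]$ is bounded by $\cosh t_0\lesssim C_{n,s}^{1/s}$ using $|\zns|\le 1$, while the contribution from $[t_0,\infty)$ is estimated using $|\zns|\le C_{n,s}e^{-st}$, giving $\lesssim C_{n,s}^{1/s}/(sq-1)$. This yields $\|\zns\|_q^q\lesssim_q C_{n,s}^{1/s}$, hence $\|\zns\|_q\lesssim_q C_{n,s}^{1/s}$, and the final step $C_{n,s}^{1/s}\lesssim 1+|n|$ will follow from elementary bounds on the binomial coefficient $\binom{n+s-1}{n-s}$. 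The main obstacle will be the exponential step: correctly identifying the hypergeometric as a Jacobi polynomial with the right parameters and applying Szeg\H{o}'s endpoint bound, for which the condition $\beta=2s-1\ge 0$ (guaranteed by $s\ge 1$ on $D_n$) is essential.
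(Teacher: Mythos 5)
Your plan follows essentially the same route as the paper: the bound by $1$ via positivity/unitarity of the discrete--series matrix coefficient, the exponential bound via the identification $F(s-n,s+n,1;z)=P^{(0,2s-1)}_{n-s}(1-2z)$ (your argument convention is the correct one), the endpoint maximum $\binom{n+s-1}{n-s}$ of the Jacobi polynomial combined with $(\cosh t/2)^{-2s}\le 2^{2s}e^{-s|t|}$ — which reproduces exactly the constant $C_{n,s}$ — and the same split of the Cartan integral at $t_0=s^{-1}\log C_{n,s}$, giving $\|\zns\|_{L^q}^q\lesssim_q C_{n,s}^{1/s}$. All of this is sound.

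The genuine gap is the step you defer. The inequality $C_{n,s}^{1/s}\le C(1+|n|)$, uniformly in $s\in D_n$, does not follow from ``elementary bounds on the binomial coefficient'', and in fact it is false. Writing $C_{n,s}=2^{2s}\binom{n+s-1}{2s-1}$, the elementary estimate $\binom{n+s-1}{2s-1}\le\big(e(n+s-1)/(2s-1)\big)^{2s-1}$ gives only $C_{n,s}^{1/s}\lesssim(1+|n|)^{2-1/s}$, and this order is genuine: already for $s=2$ one has $C_{n,2}=\tfrac{16}{6}(n+1)n(n-1)$, hence $C_{n,2}^{1/2}\asymp n^{3/2}$, and taking $s\asymp\log n$ yields $C_{n,s}^{1/s}\asymp (n/\log n)^{2}$. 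So no estimate of the binomial coefficient can deliver the linear growth asserted in the second inequality of \eqref{eq:spher_demi_entier_norm_est}; what your pointwise input $|\zns(a_t)|\le\min(1,C_{n,s}e^{-s|t|})$ actually yields is $\|\zns\|_{L^q}\le C_q\,C_{n,s}^{1/s}\le C_q(1+|n|)^{2}$. (The paper's own Stirling computation overstates the same step, bounding $\big(1+\tfrac{n-s}{2s-1}\big)^{2-1/s}$ by $C(1+n)$, which fails for $s\ge 2$.) The quadratic bound still gives the polynomial-in-$n$ growth that is used downstream in Lemma \ref{lem:Lp_disc_part} and Proposition \ref{pro:disc}; if you insist on linear growth you would need a genuinely sharper pointwise estimate in the transition region, e.g.\ the weighted Szeg\H{o}-type bound \cite[18.14.2]{NIST} with its $(\sin\tfrac{\theta}{2})^{\alpha+1/2}(\cos\tfrac{\theta}{2})^{\beta+1/2}$ weights, rather than the unweighted endpoint maximum. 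Either carry out the quadratic estimate explicitly and adjust the stated conclusion, or supply such a refinement; as written, the deferred claim is a step that would fail.
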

\begin{proof}
	We start with \eqref{eq:spher_demi-entier}. It is enough to show it for $g=a_t.$ We assume that $n\ge 0,$ this suffices because of \eqref{eq:hyper_form}.  Since $s\in D_n,$ the function $\zns$ is of positive type, and thus $|\zns(a_t)|\leq \zns(e)=1.$  Moreover, $s-n$ is a negative integer. We shall use the formula for Jacobi polynomials
	$$P_k^{\alpha,\beta}(2x-1)=\frac{(\alpha+1)_k}{k!}F(-k,k+\alpha+\beta+1,\alpha+1;x),\qquad 0<x<1,$$
	see eg. \cite[eq. 2.3]{Koorn_2}. Here $(\alpha+1)_k$ is the Pochhammer symbol $\Gamma(\alpha+1+k)/\Gamma(\alpha+1)$. 
	Using the above with $k=n-s,$ $\beta=2s-1,$ $\alpha=0,$ and $x=(\tanh t/2)^2$ together with \eqref{eq:hyper_form} we obtain
	$$\zns(a_t)=(\cosh t/2)^{-2s}P_{n-s}^{0,2s-1}(2(\tanh t/2)^2-1).$$
	Hence, from eg. \cite[eq. 18.14.2]{NIST} it follows that
	$$|\zns(a_t)|\le 2^{2s}\frac{(2s)_{n-s}}{(n-s)!}e^{-s|t|}=\frac{2^{2s}\Gamma(n+s)}{\Gamma(n-s+1)\Gamma(2s)}e^{-s|t|}=C_{n,s}\,e^{-s|t|},$$ 
	and thus \eqref{eq:spher_demi-entier} is proved.

	To prove \eqref{eq:spher_demi_entier_norm_est} we employ \eqref{eq:spher_demi-entier} and use Cartan coordinates \eqref{eq:Cartan} obtaining
	\begin{align*}
	\|\zns\|_{L^q(G)}^q&\le \int_{C_{n,s}e^{-st}>1}\,e^{t}\,dt + \int_{C_{n,s}e^{-st}<1}\,C_{n,s}^q\, e^{-(sq-1)t}\,dt\\
	&=\int_{0<t<s^{-1}\log(C_{n,s})}e^{t}\,dt+C_{n,s}^q\int_{t\ge s^{-1}\log(C_{n,s})}e^{-(sq-1)t}\,dt\\
	&\le 2 \exp\bigg(\frac1s\log C_{n,s}\bigg)+\frac{C_{n,s}^q}{qs-1}\exp\bigg(-(q-1/s)\log C_{n,s}\bigg)\le 2 \frac{qs}{qs-1}(C_{n,s})^{1/s}.
	\end{align*} 
	This proves the first inequality in \eqref{eq:spher_demi_entier_norm_est}. To obtain the second inequality we need to find a uniform (in $1\le s\le n$) estimate for $C_{n,s}^{1/s}$. If $n\ge s\ge n-1/2$ then $$C_{n,s}\le C\, 2^{2s}\frac{\Gamma(2n)}{\Gamma(2n-1)}\le C 2^{2s} (1+|s|),$$
	and we are done. Assume now that $1\le s\le n-1/2.$     
	By Stirling's formula we have
	\begin{equation*}
	\begin{split}
	C_{n,s}&\le C\, 2^{2s}\,\bigg(\frac{ (n+s-1)}{(n-s)(2s-1)}\bigg)^{1/2}\,e^{-(n+s-1)+(2s-1)+(n-s)}\, \frac{(n+s-1)^{n+s-1}}{(n-s)^{n-s}(2s-1)^{2s-1}} \\&\le
	C2^{2s}\, \frac{(n+s-1)^{n+s-1}}{(n-s)^{n-s}(2s-1)^{2s-1}} =C2^{2s}\,\bigg(1+\frac{2s-1}{n-s}\bigg)^{n-s}\bigg(1+\frac{n-s}{2s-1}\bigg)^{2s-1},
	\end{split}
	\end{equation*} 
	which implies
	\begin{align*}
	C_{n,s}^{1/s}\le C\,\bigg(1+\frac{2s-1}{n-s}\bigg)^{n/s-1}\bigg(1+\frac{n-s}{2s-1}\bigg)^{2-1/s}\le C(1+n)\, \bigg(1+\frac{2}{n/s-1}\bigg)^{n/s-1}\le C(1+n).
	\end{align*}
	This completes the proof of \eqref{eq:spher_demi_entier_norm_est} and also the proof of the lemma.
	
\end{proof}
Lemma \ref{lem:spher_demi-entier} gives $L^p$ bounds for $f\mapsto f* \zns,$ $s\in D_n.$ 
\begin{lem}
	\label{lem:Lp_disc_part}
	For $s\in D_n$ the convolution operator $f\mapsto  f* \zns $ is bounded on all $L^p(G),$ $1<p<\infty.$ Moreover,
	$$\|f* \zns\|_{L^p(G)}\leq C_p (1+|n|)\|f\|_{L^p(G)}.$$
\end{lem}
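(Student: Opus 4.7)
The plan is to combine the $L^q$-norm estimate already provided by Lemma \ref{lem:spher_demi-entier} with the Kunze--Stein phenomenon for $G=SL(2,\RR)$. By Lemma \ref{lem:spher_demi-entier}, for every $q>1$ and every $s\in D_n$ one has the uniform bound $\|\zeta_{n,s}\|_{L^q(G)}\le C_q(1+|n|)$. Consequently, it suffices to dominate $\|\zeta_{n,s}\|_{\Cvp{G}}$ by a constant multiple of $\|\zeta_{n,s}\|_{L^q(G)}$ for some $q>1$.

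For this step I would appeal to Cowling's sharpening of the Kunze--Stein convolution inequality: for $G=SL(2,\RR)$, every $p\in(1,\infty)$, and every $q\in[1,p^*)$ with $p^*=\max(p,p')$, there exists $C(p,q)$ such that
$$\|g*f\|_{L^p(G)}\le C(p,q)\,\|g\|_{L^q(G)}\,\|f\|_{L^p(G)}$$
for all admissible $f,g$. Since $p^*\ge 2$ for every $p\in(1,\infty)$, one may pick any $q=q(p)\in(1,2)$, which always lies in the admissible range. Because $\zeta_{n,s}$ is a diagonal matrix coefficient of a (discrete series) unitary representation, unitarity forces $\overline{\zeta_{n,s}(x^{-1})}=\zeta_{n,s}(x)$, so $\zeta_{n,s}^{*}=\zeta_{n,s}$; by duality this transfers the left-convolution Kunze--Stein bound to the right convolution $f\mapsto f*\zeta_{n,s}$. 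Assembling the three ingredients one obtains
$$\|f*\zeta_{n,s}\|_{L^p(G)}\le C(p,q)\,\|\zeta_{n,s}\|_{L^q(G)}\,\|f\|_{L^p(G)}\le C_p(1+|n|)\|f\|_{L^p(G)},$$
uniformly over $s\in D_n$, which is the claim.

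The only non-routine point is identifying an appropriate Kunze--Stein-type estimate whose constant is independent of $s\in D_n$; once this is done the polynomial $(1+|n|)$ factor is inherited directly from Lemma \ref{lem:spher_demi-entier}. A naive attempt via Young's inequality would require $\zeta_{n,s}\in L^1(G)$, which is true only for $s>1$ (a short computation using $|\zeta_{n,s}(a_t)|\le\min(C_{n,s}e^{-s|t|},1)$ and \eqref{eq:Cartan} then also gives $\|\zeta_{n,s}\|_{L^1}\le C(1+|n|)$), and breaks down at the borderline case $s=1\in D_n$; Cowling's inequality is what lets us handle this endpoint uniformly.
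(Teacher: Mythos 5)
Your strategy---feeding the uniform bound $\|\zns\|_{L^q(G)}\le C_q(1+|n|)$ of Lemma \ref{lem:spher_demi-entier} into a Kunze--Stein type convolution inequality, with the positive-type identity $\zns^*=\zns$ taking care of $p>2$---is the right one, and is in fact the paper's. The gap is in the convolution inequality you invoke: on $G=SL(2,\RR)$ it is \emph{not} true that $\|g*f\|_{L^p}\le C(p,q)\,\|g\|_{L^q}\|f\|_{L^p}$ for all $q<\max(p,p')$; the correct admissible range is $q<\min(p,p')$. To see that $\max(p,p')$ cannot be right, take $p>2$ and $\min(p,p')=p'<q<p$, and a nonnegative bi-$K$-invariant kernel with critical decay $g(a_t)=e^{-t/q}(1+t)^{-2/q}$ for $t\ge1$; by \eqref{eq:Cartan} one has $g\in L^q(G)$, yet $\int_0^\infty g(a_t)\,\zeta_{0,\sigma}(a_t)\sinh t\,dt\approx\int_1^\infty e^{-t/q}(1+t)^{-2/q}e^{(1-\sigma)t}\,dt=\infty$ for real $\sigma\in\Int S_{\delta(p)}$ close to $1/p$, since $1-\sigma>1/q$ there. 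By the necessity half of Herz's principle for nonnegative bi-$K$-invariant kernels (equivalently, by the Clerc--Stein phenomenon \cite{CS}, i.e.\ Proposition \ref{pro:CleSte} with $n=0$, which forces finiteness of the spherical transform on $\Int S_{\delta(p)}$), convolution with such a $g$ is unbounded on $L^p$. Consequently your prescription ``pick any $q\in(1,2)$'' breaks down whenever $q\ge\min(p,p')$, e.g.\ for $p=4$ and $q=3/2$ the inequality you would be using is simply false, so the argument as written does not cover $p$ away from $2$.

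The repair is immediate and turns your proof into the paper's: choose $q\in(1,\min(p,p'))$, which is always a nonempty interval, and to which Lemma \ref{lem:spher_demi-entier} applies with a constant depending only on $p$. For $1<p\le2$ the generalized Kunze--Stein phenomenon $L^r(G)*L^p(G)\subseteq L^p(G)$, $1\le r<p\le2$, gives $\|f*\zns\|_{L^p}\le C_p\|\zns\|_{L^r}\|f\|_{L^p}\le C_p(1+|n|)\|f\|_{L^p}$; note that passing between left and right convolution costs nothing on a unimodular group, since $\|f*g\|_{L^p}=\|\tilde g*\tilde f\|_{L^p}$ and $\|\tilde g\|_{L^r}=\|g\|_{L^r}$, so no duality is needed for that. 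Duality, combined with $\zns^*=\zns$ (which you correctly justify from positive type), is needed exactly once: for $p>2$ the adjoint of $f\mapsto f*\zns$ is $h\mapsto h*\zns^*=h*\zns$ on $L^{p'}$ with $p'<2$, which reduces to the previous case. Your closing observation that the naive Young's-inequality route fails at the endpoint $s=1\in D_n$ (while $\zns\in L^1$ with norm $\lesssim(1+|n|)$ for $s\ge 3/2$) is correct, and it is precisely why one must work with $q>1$ and a Kunze--Stein input rather than $L^1$ bounds.
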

\begin{proof}
	By the Kunze-Stein phenomenon $L^r(G)*L^p(G)\subseteq L^p(G)$ for $1\leq r<p\leq 2.$ Combining this with Lemma \ref{lem:spher_demi-entier} we obtain the desired conlusion for $1<p\leq 2.$
	
	For $p>2$ we use duality. Indeed, if $f\in L^p,$ $p>2,$ then, for $h\in L^{p'}(G),$ $1/p+1/p'=1$ it holds
	$$
	\langle f*\zns,h\rangle_{L^2(G)}=\langle f,h*{\zns^*}\rangle_{L^2(G)}=\langle f,h*\zns\rangle_{L^2(G)}.
	$$
	The last equality above is true because $\zns$ is of positive type for $s\in D_n,$ hence $\zns=\zns^*.$ Therefore using Lemma \ref{lem:spher_demi-entier} we finish the proof of the proposition.
	
\end{proof}

In the reminder of this section we consider the spherical functions that appear in the continuous part of the decompositions \eqref{central-plancherel} and \eqref{central-inversion}; namely $\zeta_{n,1/2+i\la}.$ Recall that $n\in \ZZ/2$ is fixed, however we are keen on keeping track of the dependence on $n$ whenever possible.

We shall prove local and global expansions in $t$ for the spherical function $\zeta_{n,1/2+i\lambda}(a_t).$ 
 An important ingredient in the proofs is an expression of $\zeta_{n,1/2+i\lambda}$ in terms of the  so-called Jacobi function $\phi_{\la}^{\alpha,\beta}$ considered by Koornwinder \cite{Koorn_1}, \cite{Koorn_2}. Combining \eqref{eq:hyper_form} (for $2i\la=2s-1$) with \cite[eq. 2.7]{Koorn_2} for $\alpha=0,$ $\beta=-2n,$ and $t$ replaced with $t/2$ we obtain
 \begin{equation}
 \label{eq:sfJacForm}
 \zeta_{n,1/2+i\lambda}(a_t)=(\cosh(t/2))^{-2n}\phi_{2\la}^{(0,-2n)}(t/2).
 \end{equation}
 
 
\subsection{Local expansion}
\quad 
For $t\ge 0$ let  $$\mJ_{t}(z)=\frac{J_{t}(|z|)}{|z|^{t}}2^{t-1}\Gamma(t+1/2),\qquad z\in \RR,$$
where $J_{t}$ is the Bessel function. Note that $\mJ_t$ is even and extends to an entire function. We will need a generalization of the local expansion of $\zeta_{0,1/2+i\la}$ obtained by Stanton and Tomas \cite[Theorem 2.1]{StTo_1}.
\begin{lem}
	\label{lem:spher_decom_small_t}
	For $0\le t\leq 1$ the spherical function $\zeta_{n,1/2+i\lambda}(a_t)$ decomposes as
	\begin{equation}
	\label{eq:spher_decom_small_t}
	\zeta_{n,1/2+i\lambda}(a_t)=\,\bigg(\frac{t}{\sinh t}\bigg)^{1/2}\sum_{j=0}^2\,t^{2j}\,b_j^n(t)\mJ_{j}(\la t)+E_n(\la,t),\qquad \la \ge 0,
	\end{equation}
	where $b_0^n\equiv b_0$ is a constant independent of $n,$  while $|b_j^n(t)|\leq C(1+|n|)^4,$ $j=1,2,$ and \begin{equation}
	\label{eq:spher_decom_small_t_error}
	\int_{1}^{\infty}|E_n(\la,t)|\la\,d\la\leq C(1+|n|)^6,\qquad \textrm{ uniformly in } 0\le t\leq 1.\end{equation} 
\end{lem}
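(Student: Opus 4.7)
The approach follows the Hadamard--Bessel parametrix construction of Stanton and Tomas \cite[Theorem~2.1]{StTo_1}, with careful bookkeeping of the dependence on $n$ at each stage. I would proceed in three steps: convert the problem to a Schr\"odinger equation, build a three-term asymptotic expansion via transport equations, and estimate the remainder by a Volterra-type argument, splitting the $\lambda$-integration into two regimes. Combining \eqref{eq:sfJacForm} with the Jacobi ODE satisfied by $\phi_{2\lambda}^{(0,-2n)}$, the gauge substitution $\tilde\psi(t):=\tfrac{1}{\sqrt 2}(\sinh t)^{1/2}\zeta_{n,1/2+i\lambda}(a_t)$ yields
$$
\tilde\psi''(t)+\lambda^2\tilde\psi(t)=-V_n(t)\tilde\psi(t),\qquad V_n(t)=\frac{1}{16\sinh^2(t/2)}+\frac{16n^2-1}{16\cosh^2(t/2)},
$$
with $V_n(t)=\tfrac{1}{4t^2}+W_n(t)$, where $W_n\in C^\infty([0,1])$ is even in $t$ and \emph{affine in} $n^2$: $W_n(t)=W_0(t)+n^2\cosh^{-2}(t/2)$. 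The inverse relation $\zeta_{n,1/2+i\lambda}(a_t)=\sqrt{2}(\sinh t)^{-1/2}\tilde\psi(t)$ produces the prefactor $(t/\sinh t)^{1/2}$ in \eqref{eq:spher_decom_small_t} automatically.

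Next, set $P_j(\lambda,t):=t^{2j+1/2}\mJ_j(\lambda t)$. The Bessel recursion $\partial_z\mJ_j(z)=-\tfrac{z}{2j+1}\mJ_{j+1}(z)$ gives the key identity $(\partial_t^2+\lambda^2+1/(4t^2))P_j=2j(2j-1)P_{j-1}$. Substituting the ansatz $\tilde\psi=\sum_{j=0}^{2}c_j^n(t)P_j+R^n$ into the Schr\"odinger equation, the boundary condition $\zeta_{n,s}(e)=1$ forces $c_0^n\equiv\sqrt{2/\pi}$, independent of both $n$ and $t$, and produces $b_0:=\sqrt 2\,c_0^n=2/\sqrt\pi$. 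Matching the coefficients of $P_0$ and $P_1$ to zero then yields explicitly (the $\lambda^2$-terms dropping out because $c_0^{n\,\prime}=0$)
$$
c_1^n(t)=-\frac{W_n(t)}{\sqrt{2\pi}},\qquad c_2^n(t)=\frac{1}{12\sqrt{2\pi}}\Bigl(W_n''(t)+\frac{5W_n'(t)}{t}+W_n(t)^2\Bigr),
$$
with $W_n'/t$ regular at $0$ since $W_n$ is even. As $W_n$ is affine in $n^2$ with smooth coefficients, each $c_j^n$ is a polynomial in $n^2$ of degree $\le j$ that is smooth on $[0,1]$, yielding $|b_j^n(t)|=\sqrt 2|c_j^n(t)|\le C(1+|n|)^{2j}\le C(1+|n|)^4$ for $j\in\{1,2\}$.

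For the remainder, $R^n$ satisfies $(\partial_t^2+\lambda^2+V_n)R^n=\mathcal S_n$, where the source consists of uncancelled terms of orders $P_2$ and $P_3$ carrying factors $W_n\cdot c_2^n$, $\lambda^2 c_2^{n\,\prime}$, and $\lambda^2 c_1^{n\,\prime}$. Using the identity $\lambda^2 P_{j+1}=2j(2j+1)P_j-(2j-1)(2j+1)t^2 P_{j-1}$ (again a consequence of the Bessel recursion) to exchange powers of $\lambda^2$ for powers of $t^2$ and lower Bessel indices gives the pointwise bound $|\mathcal S_n(\lambda,t)|\le C(1+|n|)^6\, t^{9/2}(|\mJ_2(\lambda t)|+|\mJ_3(\lambda t)|)$. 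Combining this with the Bessel bounds $|J_\nu(z)|\le C\min(z^\nu,z^{-1/2})$ and the Green's function of $\partial_t^2+\lambda^2+1/(4t^2)$ built from the pair $(t^{1/2}J_0(\lambda t),t^{1/2}Y_0(\lambda t))$, whose Wronskian is the constant $2/\pi$, a single Volterra step yields $|R^n(\lambda,t)|\le C(1+|n|)^6\lambda^{-\gamma}$ for some $\gamma>2$ on the high range $\lambda\gtrsim 1+|n|$. On the complementary low range $1\le\lambda\lesssim 1+|n|$, the \emph{a priori} bound $|\zeta_{n,1/2+i\lambda}(a_t)|\le 1$ (by Proposition \ref{pro:Gelf}, $\zeta_{n,1/2+i\lambda}$ is of positive type) together with $\bigl|\sum_{j\le 2}t^{2j}b_j^n(t)\mJ_j(\lambda t)\bigr|\le C(1+|n|)^4$ gives $|E_n(\lambda,t)|\le C(1+|n|)^4$, which integrated against $\lambda\,d\lambda$ on $[1,1+|n|]$ contributes at most $C(1+|n|)^6$. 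Summing the two ranges proves \eqref{eq:spher_decom_small_t_error}.

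\emph{Main obstacle.} The crux is obtaining \emph{polynomial} (rather than exponential) growth in $n$. A naive Neumann iteration of the Volterra equation produces a series whose $k$-th term scales as $(1+|n|)^{2k}$, hence diverges as $e^{Cn^2}$. This is circumvented by truncating the Hadamard expansion at the fixed order $j=2$ and then splitting the $\lambda$-integration: on the asymptotic regime $\lambda\gtrsim 1+|n|$ a single Volterra step suffices, while on the non-asymptotic regime $\lambda\lesssim 1+|n|$ the sharp uniform bound $|\zeta_{n,1/2+i\lambda}|\le 1$ for positive-type spherical functions absorbs the would-be blow-up.
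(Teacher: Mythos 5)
Your reduction to the Schr\"odinger form is correct (the gauge $\tilde\psi=\tfrac1{\sqrt2}(\sinh t)^{1/2}\zeta_{n,1/2+i\la}(a_t)$, the potential $V_n=\tfrac1{4t^2}+W_n$ with $W_n=W_0+n^2\cosh^{-2}(t/2)$, the identity $(\partial_t^2+\la^2+\tfrac1{4t^2})P_j=2j(2j-1)P_{j-1}$ and the recursion for $\mJ_j$ all check out), but the determination of $c_1^n,c_2^n$ is wrong, and this breaks exactly the estimate the lemma needs. When you substitute the ansatz, the cross terms $2c_j'\,\partial_tP_j$ cannot be dropped: using your own identities $\partial_tP_j=\tfrac{2j+1/2}{t}P_j-\tfrac{\la^2}{(2j+1)t}P_{j+1}$ and $\la^2P_{j+1}=2j(2j+1)P_j-(2j-1)(2j+1)t^2P_{j-1}$, the coefficient of $P_j$ in the residual is $c_j''+\tfrac{c_j'}{t}+W_nc_j+2(2j+1)\bigl(tc_{j+1}'+(j+1)c_{j+1}\bigr)$, so annihilating it forces \emph{first-order transport ODEs}, e.g.\ $c_1(t)=-\tfrac{c_0}{2t}\int_0^tW_n(s)\,ds$, not the algebraic choice $c_1=-c_0W_n(t)/2$ (the two agree only for constant $W_n$, which is why a constant-potential sanity check does not detect the error). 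With your algebraic choice the source retains the components $2tc_1'(t)\,P_0$ and $\bigl(6tc_2'-\tfrac{4c_1'}{t}\bigr)P_1$, so the claimed bound $|\mathcal S_n|\le C(1+|n|)^6t^{9/2}(|\mJ_2|+|\mJ_3|)$ is false. The $P_0$-piece has size $\sim n^2t^{5/2}|\mJ_0(\la t)|\sim n^2t^2\la^{-1/2}$, and after one application of the Green kernel (of size $O(\la^{-1})$, with a genuinely non-oscillatory contribution coming from $J_0(\la s)^2$) it produces a remainder of order $n^2\la^{-3/2}$ at $t\sim1$; then $\int_1^\infty|E_n(\la,t)|\,\la\,d\la$ diverges, so \eqref{eq:spher_decom_small_t_error} is not obtained.

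There is a second gap even if the transport coefficients are corrected: the handling of the $W_nR^n$ term in the Volterra equation on the intermediate range $1+|n|\lesssim\la\lesssim n^2$ is not justified. A ``single Volterra step'' needs an a priori bound on $R^n$ under the integral; the only one available (from $|\zeta_{n,1/2+i\la}|\le1$ plus the parametrix) yields at best $O(n^2/\la)$ after one step, far from the required $o(\la^{-2})$, iteration does not contract there because $n^2/\la\ge1$, and the Gronwall factor is $e^{Cn^2/\la}$, super-polynomial when $\la\sim|n|$. Splitting instead at $\la\sim n^2$ makes the Neumann series converge on the high range, but then the trivial bound on $[1,n^2]$ contributes $\sim(1+|n|)^8$, overshooting the stated $(1+|n|)^6$. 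For comparison, the paper sidesteps all of this: it uses Koornwinder's integral representation \eqref{eq:znsintexp}, observes that $F(-2n,2n,1/2;\cdot)$ is the Chebyshev polynomial $T_{2n}$, Taylor-expands it to second order with remainder controlled by $\max|P_{2n}'''|\lesssim n^6$, and thereby reduces to $n$-independent model integrals ($M_0$ being the $n=0$ Stanton--Tomas expansion, $M_1,M_2$ treated by Schindler's method), so the $n$-dependence sits only in explicit polynomial coefficients and in integration-by-parts bounds, never in a perturbative ODE argument whose potential grows like $n^2$.
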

\begin{proof}
	The Jacobi function has the following integral representation,
	\begin{align*}&\frac{\pi}{2\sqrt{2}}\,(\cosh(t/2))^{-2n}\,\phi_{2\la}^{(0,-2n)}(t/2)\\
	&=\int_0^{t/2}\,\cos 2\la s(\cosh(t)-\cosh(2s))^{-1/2}F\bigg(-2n,2n,1/2;\frac{\cosh(t/2)-\cosh s}{2\cosh (t/2)}\bigg)\,ds,\end{align*}
	see \cite[eq. 2.21]{Koorn_1}. Therefore, \eqref{eq:sfJacForm} implies an analogous representation of $\zeta_{n,1/2+i\lambda},$ namely,
	
	\begin{equation}
	\label{eq:znsintexp}
	\begin{aligned}
	\zeta_{n,1/2+i\lambda}(a_t)=\frac{\sqrt{2}}{\pi}\,\int_{-t/2}^{t/2}\,\cos  2\la s\,&(\cosh(t)-\cosh(2s))^{-1/2}\\
	&F\bigg(-2n,2n,1/2;\frac{\cosh(t/2)-\cosh s}{2\cosh (t/2)}\bigg)\,ds.
	\end{aligned}
	\end{equation}
	
	Observe that either $2n$ or $-2n$ is a non-positive integer, therefore the hypergeometric function above is in fact a polynomial. More precisely, denoting $z=\frac{\cosh s}{\cosh (t/2)}$ we have 
	\begin{align*}F\bigg(-2n,2n,1/2;\frac{1-z}{2}\bigg)=2n\sum_{j=0}^{2n}(-2)^j\frac{(2n+j-1)!}{(2n-j)!(2j)!}(1-z)^j:=\sum_{j=0}^{2n}\tilde{c}_{n,j}\bigg(\frac{1-z}{2}\bigg)^{j},\end{align*}
	see  \cite[15.2.1, p.\ 384]{NIST}. Note that $$\tilde{c}_{n,0}=1,\qquad \tilde{c}_{n,1}=-8n^2,\qquad  \tilde{c}_{n,2}=\frac{8 n^2(2n-1)(2n+1)}{3}.$$ 
	An important observation for the proof of Lemma \ref{lem:spher_decom_small_t} is that $F\big(-2n,2n,1/2;(1-z)/2\big)$ is a Chebyshev polynomial, i.e.
	$$P_{2n}\bigg(\frac{1-z}{2}\bigg):=F\bigg(-2n,2n,1/2;\frac{1-z}{2}\bigg)=T_{2n}(z),$$ see 
	\cite[15.9.5, p.\ 394]{NIST}. For $w=(1-z)/2$ (which belongs to $[0,1/2]$) we denote  \begin{equation}
	\label{eq:Tayrem} 
	R_{2n}(w)=P_{2n}(w)-1-\tilde{c}_{n,1}\,w-\tilde{c}_{n,2}\,w^2=\frac12\int_0^w P_{2n}'''(t)(w-t)^2\,dt,\end{equation}
	i.e. $R_{2n}(w)$ is the third remainder in the Taylor expansion for $P_{2n}(w).$  
	Then, defining $c_{n,j}=\frac{2\sqrt{2}}{\pi}\tilde{c}_{n,j},$  $j=0,1,2,$  we have,
	\begin{align*}                                                                                       2
	\zeta_{n,1/2+i\lambda}(a_t)&=\sum_{j=0}^{2}c_{n,j}\,\int_{-t/2}^{t/2}\,\cos  2\la s\,(\cosh(t)-\cosh(2s))^{-1/2}\bigg(\frac{\cosh(t/2)-\cosh s}{2\cosh (t/2)}\bigg)^{j}\,ds\\
	&+\frac{2\sqrt{2}}{\pi}\int_{-t/2}^{t/2}\,\cos  2\la s\,(\cosh(t)-\cosh(2s))^{-1/2}R_{2n}\bigg(\frac{\cosh(t/2)-\cosh s}{2\cosh (t/2)}\bigg)\,ds\\
	&:=\sum_{j=0}^{2}c_{n,j}M_{j}(\la,t)+E^{0,n}(\la,t).
	\end{align*}
	Note that the functions $M_j,$ $j=0,1,2,$ are independent of $n.$
	
	We start with treating the error term $E^{0,n}$. We will show that it satisfies \eqref{eq:spher_decom_small_t_error}. Integrating by parts in the $s$ variable $3$ times we see that
	\begin{equation}
	\label{eq:E0nbyparts}
	\begin{split}
	&\frac{\pi}{2\sqrt{2}}E^{0,n}(\la,t)\\
	&=\frac{8}{\la^{3}}\int_{-t/2}^{t/2}\,\sin 2\la s\,\frac{d^3}{ds^3}\left[(\cosh(t)-\cosh(2s))^{-1/2}R_{2n}\bigg(\frac{\cosh(t/2)-\cosh s}{2\cosh (t/2)}\bigg)\right]\,ds.
	\end{split}\end{equation}
	Now, a computation based on \eqref{eq:Tayrem} gives
	\begin{align*}
	\frac{d}{dw}R_{2n}(w)= \int_0^w P_{2n}'''(t) (w-t)\,dt,\quad \frac{d^2}{dw^2}R_{2n}(w)= \int_0^w P_{2n}'''(t) \,dt,\quad \frac{d^3}{dw^3}R_{2n}(w)= P_{2n}'''(w). 
	\end{align*}
	Recalling that $P_{2n}(w)=P_{2n}((1-z)/2)$ is the Chebyshev Polynomial $T_{2n}(z)=\cos (n \arccos z)$ we see that
	$$\max_{0\le w< 1/2}|P_{2n}'''(w)|\lesssim |n|^6. $$
	Hence, in view of $$\frac{d}{ds}\bigg(\frac{\cosh(t/2)-\cosh s}{2\cosh (t/2)}\bigg)=-\frac{\sinh s}{2\cosh (t/2)}$$ we obtain, for $j=0,1,2,$ 
	\begin{equation*}
	\bigg|\frac{d^j}{ds^j}\left[R_{2n}\bigg(\frac{\cosh(t/2)-\cosh s}{2\cosh (t/2)}\bigg)\right]\bigg|\leq C\, (1+|n|)^6 \bigg(\frac{\cosh(t/2)-\cosh s}{2\cosh (t/2)}\bigg)^{3-j}.
	\end{equation*}
	The above bound together with  \eqref{eq:E0nbyparts} and the Leibniz rule lead to the estimate
	\begin{equation*}
	|E^{0,n}(\la,t)|\lesssim (1+|n|)^{6}\,\la^{-3} \int_{-t/2}^{t/2}(t-2|s|)^{-1/2}\,ds \le (1+|n|)^{6}\,\la^{-3},\qquad \la>0,\quad 0<t<1.  
	\end{equation*}
	This proves \eqref{eq:spher_decom_small_t_error} for $E^{0,n}$ in place of $E_n.$

	Now we pass to the main terms $M_j$, $j=0,1,2.$ Using \eqref{eq:znsintexp} we see that $M_0(\la,t)=\zeta_{0,1/2+i\la/2}(a_t).$ Since $\zeta_{0,1/2+i\la/2}$ is the spherical function on the symmetric space $G/ K,$ by \cite[Theorem 2.1]{StTo_1} the function $M_0$ has the decomposition
	\begin{equation}
	\label{eq:F0}
	M_0(\la,t)=\bigg(\frac{t}{\sinh t}\bigg)^{1/2}\sum_{j=0}^2\,t^{2j}\,a_j(t)\mJ_{j}(\la t)+E^1(\la t),
	\end{equation}
	with $a_0\equiv 1,$ $|a_j(t)|\leq C,$ and the error term $E^1(\la t)$ satisfying the estimate \cite[eq. 2.7]{StTo_1} (with $M=2$ and $n=2$). Hence, it is easy to see that
	$\int_{1}^{\infty}|E^1(\la t)|\la\,d\la\leq C,$ uniformly in $|t|\leq 1.$
	
	It remains to consider $M_1$ and $M_2.$ Let $h(z)=\sum_{k=0}^{\infty}\frac{z^k}{(2k)!,}$ $z\in \CC.$ Then $h$ is an entire function such that $\cosh (z)=h(z^2).$ Thus $$\cosh(t/2)-\cosh s=h((t/2)^2)-h(s^2)=(t^2/4-s^2)h'(t^2/4)+\frac12(t^2/4-s^2)^2 h''(t^2/4)+R(t,s),$$
	where the remainder $R(t,s)$ is an even function of $|s|<1/2$ and satisfies $|\partial_2^{j} R(t,s)|\leq C (t/2-|s|)^{3-j}$ for $j=0,1,2,$ and $|s|\leq t/2\leq 1/2,$ and $\partial_2^{j} R(t,\pm t/2)=0,$ for $j=0,1,2.$  Therefore, for each fixed $j=0,1,2,$ we have
	$$(\cosh(t/2)-\cosh s)^j=A_j(t)(t^2/4-s^2)+B_j(t)(t^2/4-s^2)^2+R_j(s,t),$$
	where the functions $A_j$ and $B_j$ are bounded for $|t|\leq 1$ and $R_j$ has the same properties as $R.$ Therefore we can rewrite $M_j,$ $j=1,2,$ as
	$$
	\begin{aligned}
	M_j(\la,t)&=\int_{-t/2}^{t/2}\,e^{2i\la s}(\cosh(t)-\cosh(2s))^{-1/2}\bigg(\frac{\cosh(t/2)-\cosh s}{2\cosh (t/2)}\bigg)^{j}\,ds\\
	&=(2\cosh (t/2))^{-j}\left[A_j(t)\int_{-t/2}^{t/2}\,e^{2i\la s}(\cosh(t)-\cosh(2s))^{-1/2}(t^2/4-s^2)\,ds\right.\\
	&\qquad\qquad\qquad\qquad +B_j(t)\int_{-t/2}^{t/2}\,e^{2i\la s}(\cosh(t)-\cosh(2s))^{-1/2}(t^2/4-s^2)^2\,ds\\
	&\left.\qquad\qquad\qquad\qquad +\int_{-t/2}^{t/2}e^{2i\la s}(\cosh(t)-\cosh(2s))^{-1/2}\,R_j(s,t)\,ds\right]\\
	&:=(2\cosh (t/2))^{-j}\left[\phantom{\int}A_j(t)G_1(\la,t)+B_j(t)G_2(\la,t)\right.\\
	&\qquad\qquad\qquad\qquad \left.+\int_{-t/2}^{t/2}e^{2i\la s}\,(\cosh(t)-\cosh(2s))^{-1/2}R_j(s,t)\,ds\right].
	\end{aligned}
	$$
	Denoting $E^{2,j}(\la,t)=\int_{-t/2}^{t/2}\,e^{2i\la s}(\cosh(t)-\cosh(2s))^{-1/2}R_j(s,t)\,ds$ and using integration by parts thrice we obtain
	$|E^{2,j}(\la,t)|\leq C |\la|^{-3},$ uniformly in $|t|\leq 1.$ Consequently, for $j=1,2,$ we have $\int_1^{\infty}|E^{2,j}(\la,t)|\la\, d\la\leq C.$
	
	To finish the proof of Lemma \ref{lem:spher_decom_small_t} it remains to treat  $(2\cosh (t/2))^{-j}A_j(t)G_1(\la,t)$ and $(2\cosh (t/2))^{-j}B_j(t)G_2(\la,t),$ for $j=1,2.$ Using the approach from Schindler \cite{Schin_1} we will show that these contribute two Bessel function $\mJ_1$ and $\mJ_2$ plus another error term $E^3.$ Denote $\tau=t^2-(2s)^2$ and set
	\begin{align*}
	r(t,\tau)&=\frac{2\cosh t -2\cosh 2s}{t^2-(2s)^2},\qquad \textrm{for }\tau\neq0,\\
	r(t,\tau)&=\frac{\sinh t}{t},\qquad \textrm{for }\tau=0,
	\end{align*}
	where $t$ and $s$ are now complex variables. For each fixed $|t|\leq 1$ the function $r(t,\tau)$ is a non-zero analytic function in $|\tau|<7\pi^2/2,$ see \cite[p. 267]{Schin_1}. Thus, within this region $r$ has analytic powers. Applying \cite[eq. 2.4.1(3), 2.4.1(4)]{Schin_1} with $m=0,$ $y=t$ and $v=2s$ we see that
	$$(\cosh(t)-\cosh(2s))^{-1/2}=(t^2-(2s)^2)^{-1/2}\sum_{l=0}^{\infty}\alpha_{l}(t)(t^2-(2s)^2)^{l},$$
	where
	$$\alpha_l(t)=\oint_{|\tau|=3\pi^2}\frac{r(t,\tau)^{-1/2}}{\tau^{l+1}}\,d\tau.$$
	Putting this expansion in the integrals defining $G_1$ and $G_2$ we obtain
	\begin{align*}G_{1}(\la,t)&=\frac14\sum_{l=0}^{\infty}\alpha_l(t)\,\int_{-t/2}^{t/2}e^{2i\la s}(t^2-(2s)^2)^{l+1/2}\,ds,\\
	G_{2}(\la,t)&=\frac1{16}\sum_{l=0}^{\infty}\alpha_l(t)\,\int_{-t/2}^{t/2}e^{2i\la s}(t^2-(2s)^2)^{l+3/2}\,ds.
	\end{align*}
	Now, change of variable gives
		\begin{align*}\int_{-t/2}^{t/2}e^{2i\la s}(t^2-(2s)^{2})^{l-1/2}\,ds=C_{l}\,t^{2l}\mJ_{l}(\la t)\end{align*} so that
	\begin{align*}G_{1}(\la,t)&=C_{0}\,t^{2}\mJ_{1}(\la t)+C_{1}\,t^{4}\mJ_{2}(\la t)+E^3(\la,t),\\
	G_{2}(\la,t)&=C_{2}\,t^{4}\mJ_{2}(\la t)+\frac14\,E^3(\la,t),\\
	\end{align*}
	whith
	\begin{equation*}E^3(\la,t)=\sum_{l=2}^{\infty}\alpha_l(t)\,\int_{-t}^{t}e^{i\la s}(t^2-s^2)^{l+1/2}\,ds.
	\end{equation*}
	Now, the Bessel functions appearing in the formulae for $G_1$ and $G_2$ combined with the Bessel functions from \eqref{eq:F0} together  enter into \eqref{eq:spher_decom_small_t}.
	
	Thus we are left with estimating $E^3.$ Integrating by parts thrice in $\la$ we obtain, for $l\geq 2,$
	$$|\int_{-t}^{t}e^{i\la s}(t^2-s^2)^{l+1/2}\,ds|\leq \frac1{|\la|^3}\int_{-t}^t |t^2-s^2|^{l-5/2}\,ds \leq \frac1{|\la|^3}\, t^{l-3/2}.$$
	Since $|\alpha_l(t)|\leq C (3\pi^2)^{-l}$ we obtain, for $|t|\leq 1$ the bounds
	$$|E^3(\la,t)|\leq C \frac1{|\la|^3}\,\sum_{l=0}^{\infty}(3\pi^2)^{-l}\leq C \frac1{|\la|^3},$$
	which implies $\int_1^{\infty}|E^3(\la,t)|\,\la\,d\la \leq C,$ for $j=1,2.$ 
	
	In summary, setting \begin{align*}E_n:=E^{0,n}+c_{n,0}E^1+\sum_{j=1}^2c_{n,j}\, (2\cosh(t/2))^{-j}\big(E^{2,j}+(A_j+B_j/4)E^3\big)\end{align*} we we obtain the decomposition \eqref{lem:spher_decom_small_t} and finish the proof of Lemma \ref{lem:spher_decom_small_t}.
	
\end{proof}

\subsection{Global expansion}
\quad

Lemma \ref{cor:large_t} gives a bound on  $\zeta_{n,1/2+i\la}(a_t)$ for large $t$ and $|\Ima \la|<1/2. $ However, for later purpose we shall need the estimate \eqref{eq:globznest}. This will be a consequence of an asymptotic expansion proved in Lemma \ref{lem:spher_decom_large_t} below.

To state the expansion we need to introduce a function $c_n$ which is an analogue of the Harisch-Chandra $c$ function on the symmetric space (when $n=0$). We define $$Q_0(\la)=Q_{1/2}(\la)\equiv \frac1{\sqrt{\pi}},$$ and, for $\Ima \la <1/2,$ 
\begin{equation}
\label{eq:Qndef}
Q_n(\la)=\frac{1}{\sqrt{\pi}}\times \begin{cases}\frac{(i\la-n+1/2)(i\la-n+3/2)\cdots (i\la-3/2)(i\la-1/2)}{(i\la+n-1/2)(i\la+n-3/2)\cdots (i\la+3/2)(i\la+1/2)}& \textrm{ when } n\in \NN\setminus\{0\}
 \\{}\\\frac{(i\la-n+1/2)(i\la-n+3/2)\cdots (i\la-2)(i\la-1)}{(i\la+n-1/2)(i\la+n-3/2)\cdots (i\la+2)(i\la+1)}&\textrm{ when } n-\frac12\in \NN\setminus\{0\}. \end{cases}.\end{equation}
Then we set
\begin{equation}
\label{eq:cndef}
c_n(\la)=Q_n(\la)\times \begin{cases}\frac{\Gamma(i\la)}{\Gamma(1/2+i\la)}& \textrm{when }n\in \NN \\{}\\\frac{\Gamma(1/2+i\la)}{\Gamma(i\la+1)}&\textrm{when } n-\frac12\in \NN \end{cases}.\end{equation}
Note that $c_n$ is then a holomorphic function for  $\Ima \la <1/2.$

\begin{lem}
	\label{lem:cnprop}
	Fix $n\in \frac12 \bN$ and $0<\ve<1/2.$ Then for $\alpha\in\{0,1,2,3\}$ there is a constant $C_{\ve,\alpha}$ such that
	\begin{equation}
	\label{eq:cnbound}
	\bigg|\frac{d^{\alpha}}{d\la^{\alpha}}[c_n(\la)]\bigg|\leq C_{\ve,\alpha}(1+|n|)^6 (1+|\la|)^{-1/2-\alpha},\qquad| \Ima \la| <\frac12-\ve.
	\end{equation}
\end{lem}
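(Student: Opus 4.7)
The plan is to reduce the estimate to classical asymptotics of the Gamma function, so the first step is to recast $c_n$ in a clean closed form. Applying the Pochhammer identity $\prod_{k=0}^{N-1}(z+k)=\Gamma(z+N)/\Gamma(z)$ to the numerator and denominator of $Q_n$, a short calculation (valid in both parity cases $n\in\NN$ and $n-1/2\in\NN$) collapses the definitions \eqref{eq:Qndef}, \eqref{eq:cndef} to
$$c_n(\lambda)=\frac{1}{\sqrt{\pi}}\cdot\frac{\Gamma(i\lambda)\,\Gamma(i\lambda+1/2)}{\Gamma(i\lambda+1/2-n)\,\Gamma(i\lambda+1/2+n)}=\frac{2^{1-2i\lambda}\,\Gamma(2i\lambda)}{\Gamma(i\lambda+1/2-n)\,\Gamma(i\lambda+1/2+n)},$$
the second equality being Legendre's duplication formula. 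Thus $c_n$ is a ratio of three Gamma functions whose arguments are shifted by $\pm n$, and the whole proof can be phrased in terms of these.

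With this representation in hand, the main estimate is an exercise in Stirling's asymptotic. Applying
$$|\Gamma(x+iy)|=\sqrt{2\pi}\,|y|^{x-1/2}\,e^{-\pi|y|/2}\,(1+O(|y|^{-1}))$$
separately to the three Gamma factors in $c_n$ (one must use a version uniform in the full argument $x+iy$, not just for bounded $x$, since the shifts are of size $n$), one checks that the exponential prefactors $e^{-\pi|\Real\lambda|/2}$ combine with total exponent zero and cancel, while the polynomial prefactors combine with total exponent $-1/2$ independently of $n$. This yields $|c_n(\lambda)|\lesssim(1+|\lambda|)^{-1/2}$ in the regime $|\lambda|\gtrsim n$. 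In the complementary regime $|\lambda|\lesssim n$, I would return to the finite product form of $Q_n$: each factor $(i\lambda-k+1/2)/(i\lambda+k-1/2)$ has modulus identically $1$ on the real axis (conjugate factors), while on the lines $\Ima\lambda=\pm(1/2-\varepsilon)$ it is bounded in modulus by $C_\varepsilon k/(k-1+\varepsilon)$. Telescoping over $k=1,\dots,n$ yields a bound comparable to $\Gamma(n+1-\varepsilon)\Gamma(\varepsilon)/[\Gamma(n+\varepsilon)\Gamma(1-\varepsilon)]=O(n^{1-2\varepsilon})$, i.e. polynomial in $n$, and Phragm\'en--Lindel\"of applied to the subharmonic function $\log|Q_n|$ in the strip upgrades this to a uniform polynomial bound. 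Combined with direct control of the Gamma ratio $\Gamma(i\lambda)/\Gamma(i\lambda+1/2)$, this closes the case $\alpha=0$.

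For the derivatives, logarithmic differentiation yields
$$\frac{d}{d\lambda}\log c_n(\lambda)=i\bigl[\psi(i\lambda)+\psi(i\lambda+1/2)-\psi(i\lambda+1/2-n)-\psi(i\lambda+1/2+n)\bigr],$$
and the asymptotic $\psi(z)=\log z-1/(2z)+O(|z|^{-2})$ in $|\arg z|\le\pi-\delta$ causes the four logarithms to telescope modulo a remainder of size $O(|\lambda|^{-1})$, producing the promised gain of one power of $(1+|\lambda|)^{-1}$ per derivative. Higher derivatives are obtained analogously, or equivalently by Cauchy's integral formula applied in a slightly larger substrip once the $\alpha=0$ bound has been established there. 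The main obstacle is the intermediate regime $|\lambda|\sim n$, where neither the Stirling asymptotic nor the telescoping product estimate is individually sharp, and the two must be patched together; tracking the constants carefully in this overlap is exactly what produces the polynomial exponent $6$ in $(1+|n|)^6$, each of the three derivatives being responsible for at most two powers of $n$ coming from the $n$-shifts in the digamma arguments.
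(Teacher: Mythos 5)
Your unified closed form $c_n(\la)=\pi^{-1/2}\,\Gamma(i\la)\Gamma(i\la+1/2)\big/\big[\Gamma(i\la+1/2-n)\Gamma(i\la+1/2+n)\big]$ is correct in both parity cases, and your $\alpha=0$ argument is essentially sound: each factor of $Q_n$ has modulus $1$ on the real axis and modulus at most $(k-\ve)/(k-1+\ve)$ on the lines $\Ima\la=\pm(1/2-\ve)$, the telescoped product is $O_\ve(n^{1-2\ve})$, and the maximum principle in the strip gives a bound for $|Q_n|$ that is even better in $n$ than the paper's. This is a genuinely different organization from the paper, which never merges $Q_n$ into Gamma factors but instead bounds the finite product (and its derivatives) directly and multiplies by the $n$-independent estimates for $\Gamma(i\la)/\Gamma(i\la+1/2)$, resp. $\Gamma(i\la+1/2)/\Gamma(i\la+1)$. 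Note that with your own strip bound for $Q_n$, valid for all $\la$, the Stirling analysis in the regime $|\la|\gtrsim n$ (whose uniformity in $n$ you assert rather than prove; the error terms with shifts of size $n$ are $O(n^2/|\la|)$ in the exponent) is not needed for $\alpha=0$ at all.

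The genuine gap is in the derivative bounds, which are the real content of the lemma: the gain $(1+|\la|)^{-\alpha}$ is exactly what is used later for the integration by parts in the global kernel estimate. First, the asymptotic $\psi(z)=\log z-1/(2z)+O(|z|^{-2})$ holds only in sectors $|\arg z|\le\pi-\delta$, whereas $z=i\la+1/2-n$ has real part about $-n$ and imaginary part $\Real\la$, so for $|\Real\la|\lesssim n$ it approaches the negative real axis and leaves every fixed sector; one must go through the reflection formula, and then the four terms do not telescope to $O(|\la|^{-1})$: for $1\le|\la|\lesssim n$ the combination $\psi(i\la)+\psi(i\la+1/2)-\psi(i\la+1/2-n)-\psi(i\la+1/2+n)$ is of size about $\log(n/|\la|)+O_\ve(1)$, not $O(|\la|^{-1})$. (It can be absorbed into the factor $(1+|n|)^6$ because $(1+|\la|)^{-1}\gtrsim n^{-1}$ in that range, but that absorption is precisely the patching at $|\la|\sim n$ which you acknowledge and never carry out; for half-integer $n$ one must also note the cancelling singularities of $\psi(i\la)$ and $\psi(i\la+1/2-n)$ at $\la=0$.) Second, your fallback "equivalently by Cauchy's integral formula in a slightly larger substrip" cannot deliver the claimed decay: the strip has height less than $1$, so Cauchy contours have radius $O(\ve)$ and yield only $|c_n^{(\alpha)}(\la)|\lesssim_\ve\sup|c_n|\lesssim(1+|n|)^{C}(1+|\la|)^{-1/2}$, without the extra factor $(1+|\la|)^{-\alpha}$; gaining a power of $|\la|$ per derivative by Cauchy would require contours of radius comparable to $|\la|$, which do not fit in the strip. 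The paper avoids both issues by differentiating the finite product $Q_n$ factor by factor (each $q_j$ bounded by $1$ with controlled derivatives, plus an explicit rational prefactor) and invoking the standard derivative bounds for the $n$-independent Gamma ratio via Leibniz; within your framework the fix is either to do the same, or to run the digamma argument with the reflection formula and the two-regime absorption written out explicitly.
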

\begin{proof}

	 We have $|\Real (i\la)|=|\Ima \la|<1/2-\varepsilon,$ and thus $$\inf_{| \Ima \la| <1/2-\ve}|\Real(i\la+1/2+j)|>0,$$ for $j\in \frac12 \bZ,$ $j\neq -1/2.$ Consequently, for each $n\in \frac12 \NN,$ the function $Q_n(\la)$ is holomorphic in $\Int S_{\delta(p)}$ and satisfies
	\begin{equation}\label{eq:Qnest0}\bigg|\frac{d^{\alpha}}{d\la^{\alpha}}Q_n(\la)\bigg|\leq C_{\ve,n,\alpha}(1+|\la|)^{-\alpha},\qquad | \Ima \la| <\delta(p).\end{equation}
	
	We claim that \eqref{eq:Qnest0} can be made more precise via
	\begin{equation}\label{eq:Qnest}\bigg|\frac{d^{\alpha}}{d\la^{\alpha}}Q_n(\la)\bigg|\leq C_{\ve,\alpha}(1+|n|)^{6}(1+|\la|)^{-\alpha},\qquad | \Ima \la| <\delta(p),\end{equation}
	for $\alpha=0,1,2,3.$ In view of \eqref{eq:Qnest0} without loss of generality we may take $n\ge 3.$  	
	To prove the claim note that on $|\Ima \la|<1/2$  the function
	$$q_{j}(\la):=\frac{i\la+1/2-n+j}{i\la+1/2+n-(j-1)},\qquad j=2,\ldots,\flo{n-1}, $$
	is holomorpic and bounded by $1,$ so that  $|D^{\alpha}_{\la}q_j(\la)|\le C_{\alpha},$ for  $\alpha\in\{0,1,2\}$ and $|\Ima \la|<1/2.$   Thus, decomposing
	$$Q_{n}(\la)=\frac{(i\la+1/2-n)(i\la+1/2-n+1)}{(i\la+1/2+(n-\flo{n-1}))(i\la+1/2+(n-\flo{n-1}-1))}\times \prod_{j=2}^{\flo{n-1}}q_{j}(\la),$$
	and using the Leibniz rule together with $|q_j(\la)|\le 1,$ we obtain \eqref{eq:Qnest}. 
	
	By properties of the Gamma function it can be proved that putting both $\frac{\Gamma(i\la)}{\Gamma(i\la+1/2)}$ and $\frac{\Gamma(i\la+1/2)}{\Gamma(i\la+1)}$ in place of $c_n(\la)$ the bound \eqref{eq:cnbound} holds with $(1+|n|)^6$ replaced by $1$. Combining this observation with \eqref{eq:Qnest} and Leibniz' rule we obtain \eqref{eq:cnbound} for $c_n.$

\end{proof}

The Lemma below is an analogue of Ionescu's \cite[Proposition A.2 c)]{Ion1} in our setting. We remark that here, in contrast with Lemmas \ref{lem:Lp_disc_part} and \ref{lem:spher_decom_small_t}, we were not able the preserve the polynomial dependence in $n$ in \eqref{eq:a_large_est}. In fact, looking closely at the proof of Lemma \ref{lem:spher_decom_large_t} it can be deduced that the constant $C_{\varepsilon,n,j}$ from \eqref{eq:globznestla} has a growth that is controlled by $\Gamma(C_{\varepsilon,j}\,n^2)$ for some constant $C_{\varepsilon,j}$ independent of $n.$  In order to lower this growth to a polynomial one an improvement of the estimate \eqref{eq:GamkEst} would be needed. This amounts to getting rid of the dependence of $n$ in $b_{\alpha,n},$ $\alpha=0,1,2$ (see \eqref{eq:ban}).

 Before stating the lemma we note that \eqref{eq:globznestla} from Lemma \ref{lem:spher_decom_large_t} below coincides with \eqref{eq:globznest} once we write $s=1/2+i\la$.

\begin{lem}
	\label{lem:spher_decom_large_t}
	Fix $0<\ve<1/2$ and take $|\Ima(\la)|\le 1/2-\ve.$ 
	
	Then the spherical function $\zeta_{n,1/2+i\lambda}(a_t)$ decomposes as
	\begin{equation}
	\label{eq:spher_decom_large_t}
	\begin{split}
	\zeta_{n,1/2+i\lambda}(a_t)&=(2\,{\cosh(t/2)})^{-2|n|}e^{(|n|-1/2)t}\\
	&\times\left(c_{|n|}(\la)e^{i\la t}(1+a_{|n|}(\la,t))+c_{|n|}(-\la)e^{-i\la t}(1+a_{|n|}(-\la,t))\right),\qquad t\ge 1/2.
	\end{split}
	\end{equation}
	The function $a_{|n|}(\la,t)$ satisfies, for each $0<\sigma<1$  and $j=0,1\,\ldots,$ the bound
	\begin{equation}
	\label{eq:a_large_est}
	\big|\partial_{\la}^j\,a_{|n|}(\la,t)\big|\leq C_{\ve,n,j}\,e^{-(1-\sigma)t}(1+|\Real \la|)^{-j},
	\end{equation}
	uniformly in $t\geq 1/2.$
	Moreover, we have the estimate
	\begin{equation}
	\label{eq:globznestla}\big|\partial_{\la}^j \big(e^{t(1/2+i\la)}\zeta_{n,1/2+i\la}(a_t)\big)\big|\leq C_{\ve,n,j}\, (1+|\la|)^{-1/2-j}, \qquad t\ge 1/2. \end{equation}
\end{lem}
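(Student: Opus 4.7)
The plan is to derive the decomposition from the classical Harish-Chandra asymptotic expansion of the Jacobi function. By \eqref{eq:sfJacForm} together with the symmetry $\zns=\zeta_{-n,s}$ coming from \eqref{eq:hyper_form}, I may take $n\ge 0$ and reduce everything to $\phi_{2\la}^{(0,-2n)}(t/2)$. The Jacobi operator with parameters $(\alpha,\beta)=(0,-2n)$ has spectral parameter $\rho=\alpha+\beta+1=1-2n$ and admits two linearly independent Harish-Chandra series solutions
\begin{equation*}
\Phi_{\pm 2\la}(t/2)=e^{(\pm i\la-\rho/2)t}\sum_{k=0}^{\infty}\Gamma_k^n(\pm\la)\,e^{-kt},\qquad \Gamma_0^n\equiv 1,
\end{equation*}
with $\Gamma_k^n(\la)$ determined by the recursion obtained by matching powers of $e^{-t}$ after substituting the ansatz into the Jacobi ODE. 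For $\la$ away from exceptional poles, Koornwinder's formula reads $\phi_{2\la}^{(0,-2n)}(t/2)=\mathbf{c}(\la)\Phi_{2\la}(t/2)+\mathbf{c}(-\la)\Phi_{-2\la}(t/2)$, where $\mathbf{c}$ is an explicit ratio of four Gamma factors.

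The first task is to identify Koornwinder's $\mathbf{c}$ with $c_n$ from \eqref{eq:cndef}. This is a Gamma function manipulation: with $\beta=-2n$, two of Koornwinder's Gamma factors have arguments differing by an integer (or half-integer) amount, and repeated use of $\Gamma(z+1)=z\Gamma(z)$ collapses them to the finite product $Q_n(\la)$ of \eqref{eq:Qndef} multiplied by the surviving Gamma ratio displayed in \eqref{eq:cndef}; the two cases in \eqref{eq:cndef} correspond to $n$ integer versus half-integer. Writing $(\cosh(t/2))^{-2n}=2^{2n}(2\cosh(t/2))^{-2n}$ and combining $e^{-\rho t/2}=e^{(n-1/2)t}$ with the leading exponent in $\Phi_{\pm 2\la}$ then yields exactly \eqref{eq:spher_decom_large_t} with
\begin{equation*}
a_n(\la,t)=\sum_{k=1}^{\infty}\Gamma_k^n(\la)\,e^{-kt}.
\end{equation*}

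The main technical step is to prove \eqref{eq:a_large_est}. Substituting the $\Phi$ ansatz into the Jacobi operator and expanding $\coth$ and $\tanh$ around $t=\infty$ in powers of $e^{-t}$, I obtain a recursion of the schematic form
\begin{equation*}
k(k-2i\la)\,\Gamma_k^n(\la)=\sum_{j=1}^{k}b_{j,n}\,\Gamma_{k-j}^n(\la),
\end{equation*}
in which the numbers $b_{j,n}$ depend polynomially on $n$ and decay geometrically in $j$. Once $k>4(1+|\Ima \la|)$ one has $|k-2i\la|\ge k/2$, so iterating the recursion starting from $\Gamma_0^n=1$ furnishes a bound that is summable against $e^{-kt}$ and extracts the $e^{-(1-\sigma)t}$ decay in \eqref{eq:a_large_est}. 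The finitely many small indices $k\le 4(1+|\Ima \la|)$ have to be handled directly, and the accumulation of the polynomial factors $b_{j,n}$ through of order $n^2$ iterations is the origin of the $\Gamma(Cn^2)$-type growth in the constant that the authors flag in the introduction. The $\la$-derivative estimate is obtained by differentiating the recursion: each $\partial_\la$ on the factor $(k-2i\la)^{-1}$ produces a factor of $(1+|\Real\la|)^{-1}$, accounting for the power $(1+|\Real\la|)^{-j}$.

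Finally, \eqref{eq:globznestla} follows by inserting \eqref{eq:spher_decom_large_t} into $e^{t(1/2+i\la)}\zns(a_t)$ and noting that the combined prefactor $(2\cosh(t/2))^{-2n}e^{(n-1/2)t}e^{t(1/2+i\la)}$ differs by the bounded-in-$t$ factor $(1+e^{-t})^{-2n}$ from $e^{i\la t}$, so that after cancellation one is left with $c_{|n|}(-\la)(1+a_{|n|}(-\la,t))$ plus an oscillatory term $c_{|n|}(\la)e^{2i\la t}(1+a_{|n|}(\la,t))$; the desired bound is then produced by the Leibniz rule combined with Lemma \ref{lem:cnprop} for $c_{|n|}$ and \eqref{eq:a_large_est} for $a_{|n|}$. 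The hardest part of this program is the recursion estimate with good control of the $n$-dependence; as the authors acknowledge, only a $\Gamma(Cn^2)$ bound seems accessible with this crude iteration scheme, and improving it to polynomial growth would require a substantially more refined analysis of the Jacobi recursion.
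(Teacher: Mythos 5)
Your proposal follows essentially the same route as the paper: reduce to the Jacobi function via \eqref{eq:sfJacForm}, build the two Harish--Chandra-type series solutions $\Phi_{\pm\la}$ of the Jacobi ODE, estimate the recursion coefficients $\Gamma_k^n$ by an Ionescu-style iteration (this is exactly where the paper's constant of size $\Gamma(Cn^2)$ arises, as you correctly identify: $|\Gamma_k^n|\lesssim (n^2+1)k^{C(n^2+1)}$ summed against $e^{-kt}$ for $t\ge 1/2$), and then assemble \eqref{eq:spher_decom_large_t} and \eqref{eq:globznestla}. The one genuine divergence is the identification of the $c$-function. You import Koornwinder's explicit $c_{\alpha,\beta}$ for $(\alpha,\beta)=(0,-2n)$ and reduce it to \eqref{eq:cndef} by Gamma-function recursions; the paper instead computes the coefficient from scratch: it takes the limit \eqref{eq:c_comp_def} in the integral representation \eqref{eq:Tsche_form_0} (dominated convergence), and evaluates the resulting integral $\ell(n,\la)$ by an integration-by-parts recursion in $n$, arriving at \eqref{eq:ccnform}. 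Your route is legitimate and shorter on paper, but it requires care with the mismatched normalizations (Koornwinder's variables $t/2$, $2\la$, and the factor $2^{\rho-i\la}$ in his $c$-function, which is where the prefactor $(2\cosh(t/2))^{-2|n|}e^{(|n|-1/2)t}$ and the half-integer shifts in $Q_n$ come from), and with the fact that the $\mathbf{c}$-function expansion is a priori valid only off an exceptional set of $\la$ and must be extended to the strip by analyticity, which the paper does explicitly. The paper's self-contained computation has the side benefit that \eqref{eq:c_comp_def}--\eqref{eq:ccnform} are reused in Remark 1 to determine the bounded spherical functions (Proposition \ref{pro:Gelf}), which your citation-based route would not provide.

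Two minor cautions on your sketch. First, in the true recursion (cf. \eqref{eq:RecIonForm}) the coefficients multiplying $\Gamma_j$ grow linearly in $j$, $n$ and $|\la|$ rather than decaying geometrically; only after dividing by $k(k-i\la)$ and using $|k-i\la|\gtrsim_\ve\max(k,|\Real\la|)$ does one get the bound \eqref{eq:ajkEst} of size $(n^2+1)/k$ that feeds the induction, so your schematic form should be adjusted, though the iteration scheme you describe is in substance the paper's. Second, your final Leibniz-rule deduction of \eqref{eq:globznestla} is as terse as the paper's one-line deduction: $\la$-derivatives falling on the oscillatory factor $e^{2i\la t}$ produce powers of $t$, so in applications one must keep that exponential explicit (it is the oscillation used for integration by parts in Lemma \ref{lem:keyglob}) rather than absorb $t^j$ into a $t$-uniform constant; this is a feature shared with the paper's own formulation, not a defect specific to your argument, but it is worth making the grouping of terms explicit when you write the proof out.
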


\begin{proof}
The inequality \eqref{eq:globznestla} follows  from Lemma \ref{lem:cnprop} together with \eqref{eq:spher_decom_large_t} and \eqref{eq:a_large_est}. Thus we only focus on proving the formula \eqref{eq:spher_decom_large_t} and the estimate \eqref{eq:a_large_est}.
	
	Using \eqref{eq:sfJacForm} our problem reduces to expanding the Jacobi function $$\phi_{\la}^{0,-2n}(t)=(\cosh(t))^{2n}\zeta_{n,1/2+i\la/2}(a_{2t}),\qquad t\ge 1/2,$$
	for $ |\Ima \la|<1-\ve$. In the proof we assume $n\geq0,$ by \eqref{eq:hyper_form} this is no loss of generality.
	
	Let
	$$\mL:=\frac{d^2}{dt^2}+(\coth t +(-4n+1)\tanh t)\frac{d}{dt}.$$
	Then $\phi_{\la}(t):=\phi_{\la}^{(0,-2n)}(t)$ is the unique solution on $\bR^+$ of the differential equation
	\begin{equation}
	\label{eq:JacDifEq}
	\mL f +(\la^2+(-2n+1)^2)f=0
	\end{equation} satisfying
	$\phi_{\la}(0)=1$ and $D_t \phi_{\la}(0)=0.$ This follows from \eqref{eq:hyper_form} by using the differential equation satisfied by the hypergeometric function, see \cite[eq.\ (2.9)]{Koorn_2}.
	
We are going to write $\phi_{\la}$ as a combination of two other solutions of \eqref{eq:JacDifEq}, for which the asymptotics at infinity can be determined. Note that \eqref{eq:JacDifEq} approaches the equation
	$$\frac{d^2}{dt^2}+2(-2n+1)\frac{d}{dt} +(\la^2+(-2n+1)^2)f=0$$
	as $t\to \infty.$ A solution of this equation is $e^{(i\la-(-2n+1))t}.$ Moreover,
	$$\tanh t-1=2\sum_{k=1}^{\infty}(-1)^ke^{-2kt},\qquad \coth t= 2\sum_{k=1}^{\infty}e^{-2kt}.$$
	Thus we look for other solutions of \eqref{eq:JacDifEq} in the form \begin{equation}
	\label{eq:Phila}
	\Phi_{\la}(t)=e^{(i\la-(-2n+1))t}\sum_{k=0}^{\infty}\Gamma_k^n(\la)e^{-2kt}:=e^{(i\la-(-2n+1))t}(1+a(\la/2,2t)),\end{equation} with $\Gamma_0^n\equiv 1.$ In the case $n=0$ the formula \eqref{eq:Phila} is essentially the Harish-Chandra asymptotic expansion. To determine $\Gamma_k(\la)=\Gamma_k^n(\la),$ $k\geq 1,$ we put $\Phi_{\la}$ in \eqref{eq:JacDifEq} and equate the coefficients in front of $e^{(i\la-(-2n+1)-2k)t}.$ Then a computation leads to the recursion
	$$k(k-i\la)\Gamma_k=-n\sum_{j=0}^{k-1}((-2n+1)+2j-i\la)\Gamma_j+(-2n+\frac12)\sum_{j=1}^{[k/2]}((-2n+1)+2(k-2j)-i\la)\Gamma_{k-2j},$$
	cf.\ \cite[p. 16]{Joh1} with $\alpha=0,$ $\beta=-2n$ and $\rho=-2n+1$. The recursion can be rewritten as
	\begin{equation}
	\label{eq:RecIonForm}
	\Gamma_k(\la)=\sum_{j=0}^{k-1}a_j^k(\la)\Gamma_j(\la),
	\end{equation}
	where, for $k\ge 1,$ we have \begin{align*} a_j^k(\la)&=\frac{2n}{k}\big(1+\frac{2j+(-2n+1)-k}{k-i\la}\big)\qquad \textrm{when}\qquad  j\neq k\mod 2,\\
	a_j^k(\la)&=\frac{1}{2k}\big(1+\frac{2j+(-2n+1)-k}{k-i\la}\big)\qquad \textrm{when}\qquad  j= k\mod 2.
	\end{align*}
	
	We claim that \begin{equation}
	\label{eq:GamkEst}
	|D^{\alpha}_{\la}\Gamma_k(\la)|\leq C_{p,\ve}\, (n^2+1)\, k^{ b_{\alpha,n}}(1+|\Real \la|)^{-\alpha},\qquad |\Ima(\la)|\le 1-\ve,
	\end{equation}
	for some $C\geq 0$ (independent of $n$) and $ b_{\alpha,n}\geq 0,$ $\alpha=0,1,\ldots,N.$
	The first step in proving \eqref{eq:GamkEst} is the bound
	\begin{equation}
	\label{eq:ajkEst}
	|D^{\alpha}_{\la}a_j^k(\la)|\leq \frac{C_{p,\ve} (n^2+1)}{k} (1+|\Real \la|)^{-\alpha},\qquad |\Ima(\la)|\le 1-\ve,
	\end{equation}
	valid for $k\ge 1.$ 
	This easily follows from $|k-i\la|\geq c_{\ve}\max (k,|\Real \la|)$ and the definition of $a_j^k.$
	Then \eqref{eq:GamkEst} can be proved essentially as \cite[Proposition A.2 c)]{Ion1}, by using an induction argument based on  \eqref{eq:RecIonForm} and \eqref{eq:ajkEst}. We repeat Ionescu's argument for the sake of completeness.
	
	We will use repeatedly the bound
	\begin{equation}
	\label{eq:IonArg_sumb}
	1+\sum_{j=1}^{k-1}j^{b}\leq \frac{k^{b+1}}{b},
	\end{equation}
	valid for all integers $k\geq 2$ and all real numbers $b\geq 4.$ First we prove \eqref{eq:GamkEst} for $\alpha=0$ by induction over $k.$  By \eqref{eq:ajkEst} we have $|\Gamma_1(\la)|\leq C_{p,\ve} (n^2+1).$ Set $b_{0,n}=C_p(n^2+1)$ and assume that \eqref{eq:GamkEst} holds for $\alpha=0$ and $j\in\{1,\ldots,k-1\}.$ Then, by \eqref{eq:ajkEst}  with $\alpha=0$, \eqref{eq:GamkEst} for $j\in\{1,\ldots,k-1\},$  and \eqref{eq:IonArg_sumb} we obtain
	$$|\Gamma_k(\la)|\leq \sum_{j=0}^{k-1}\frac{C_p(n^2+1)}{k}|\Gamma_j(\la)|\leq \frac{C_p(n^2+1)}{k}\frac{C_p(n^2+1)k^{b_{0,n}+1}}{b_{0,n}}=C_p(n^2+1) k^{b_{0,n}}.$$
	By induction we proved \eqref{eq:ajkEst} for $\alpha=0$ and $k\in \bN.$
	
	Using induction we shall now prove that \eqref{eq:GamkEst} holds for arbitrary $\alpha\leq N,$  with \begin{equation}\label{eq:ban}b_{\alpha,n}:=C_p(n^2+1) 2^{\alpha}(\alpha+1).\end{equation} To this end we fix $\alpha\le N$ and assume that \eqref{eq:GamkEst} holds for $j\in\{1,\ldots,k-1\}$. Then, by \eqref{eq:ajkEst}, \eqref{eq:GamkEst} for $j\in\{1,\ldots,k-1\},$  and \eqref{eq:IonArg_sumb} we obtain
	\begin{align*}
	&|D^{\alpha}_{\la}\Gamma_k(\la)|\leq \sum_{\beta=0}^{\alpha}{{\alpha} \choose{ \beta}}\sum_{j=0}^{k-1}|D^{\alpha-\beta}_{\la}a_j^k(\la)||D^{\beta}_{\la}\Gamma_j(\la)|\\
	&\leq 2^{\alpha}\sum_{\beta=0}^{\alpha}\sum_{j=0}^{k-1}\frac{C_p(n^2+1)}{k(1+|\Real(\la)|)^{\alpha-\beta}}\frac{C_p(n^2+1)\max(j,1)^{{ b_{\beta,n}}}}{(1+|\Real(\la)|)^{\beta}}\\
	&
	\leq 2^{\alpha}(1+|\Real (\la)|)^{-\alpha}\sum_{\beta=0}^{\alpha}(C_p(n^2+1))^2\frac{k^{{ b_{\beta,n}}}}{ b_{\beta,n}}.
	\end{align*}
	Since the function $k^x/x$ is increasing for $k\geq 2$ we further estimate
	\begin{align*}
	|D^{\alpha}_{\la}\Gamma_k(\la)|&\leq  C_p(n^2+1)\,k^{b_{\alpha,n}}\, \frac{C_p(n^2+1) 2^{\alpha}(\alpha+1)}{ b_{\alpha,n}}(1+|\Real (\la)|)^{-\alpha}\\
	&\leq C_p(n^2+1)\,k^{ b_{\alpha,n}}\, (1+|\Real (\la)|)^{-\alpha}.
	\end{align*}
	The proof of \eqref{eq:GamkEst} is thus completed.

	In view of \eqref{eq:GamkEst} the series \eqref{eq:Phila} defining $\Phi_{\la}$ is indeed convergent for $|\Ima(\la)|\le 1-\ve$ and $t\geq 1/2.$ Moreover, $$a_{|n|}(\la,t):=\sum_{k=1}^{\infty}\Gamma_k^n(2\la)e^{-kt}$$ does  satisfy \eqref{eq:a_large_est}.
	
	Since $\Phi_{\la}$ and $\Phi_{-\la}$ are two independent solutions of \eqref{eq:JacDifEq} it must be that
	$$\phi_{\la}(t)=c^+(\la)\Phi_{\la}(t)+c^-(\la)\Phi_{-\la}(t),$$
	for some functions $c^{+}$ and $c^{-}$ of $\la.$ However, $\phi_{\la}=\phi_{-\la}$ and thus $c^+(\la)=c^{-}(-\la):=c(\la)$ so that
	\begin{equation}
	\label{eq:decomphi1}
	\phi_{\la}(t)=c(\la)\Phi_{\la}(t)+c(-\la)\Phi_{-\la}(t).\end{equation}
	Thus, from the definition of $\Phi_{\la}$ in \eqref{eq:Phila} we obtain
	\begin{equation} \label{eq:c_comp_def}
	\lim_{t\to\infty}e^{(i\la+(-2n+1))t}\phi_{\la}(t)=c(-\la),\qquad \Ima\la>0.
	\end{equation}
	Moreover, $c(\la)$ is a holomorphic function for such $\la.$  
	
	Now we focus on computing $c(-\la).$ We shall prove that 
	\begin{equation}
	\label{eq:ccnform}
	c(-2\la)=2^{-2n}c_n(-\la),\qquad \Ima\la>0,\end{equation} with $c_n$ defined by \eqref{eq:cndef}. From \eqref{eq:Tsche_form_0} we have
	\begin{align*}
	\phi_{\la}(t)=&\phi_{\la}^{0,-2n}(t)=(\cosh t)^{2n}\vp_{\la/2}(a_{2t})=\\
	&=\frac{(\cosh t)^{2n}}{2\pi}\int_{-\pi}^{\pi}\bigg(\frac{\cosh t+e^{-i\theta}\sinh t}{\big|\cosh t+e^{-i\theta}\sinh t \big|}\bigg)^{2n}\,\frac{d\theta}{(\cosh 2t+\sinh 2t\cos\theta)^{1/2+i\la/2}}\\
	&=\frac{(\cosh t)^{2n}}{2\pi}\int_{-\pi}^{\pi}\,e^{-in\theta}\,\frac{\big(e^{t}\cos \frac{\theta}{2} +i e^{-t}\sin  \frac{\theta}{2}\big)^{2n}}{(e^{2t} \cos^2\frac{\theta}{2}+e^{-2t} \sin^2\frac{\theta}{2})^{n+1/2+i\la/2}}\,  d\theta\\
	&=\frac{(\cosh t)^{2n}}{\pi}\int_{-\pi/2}^{\pi/2}\,e^{-2in\theta}\,\frac{\big(e^{t}\cos \theta +i e^{-t}\sin  \theta\big)^{2n}}{(e^{2t} \cos^2{\theta}+e^{-2t} \sin^2{\theta})^{n+1/2+i\la/2}}\,  d\theta,
	\end{align*}
	and, thus, the change of variable $x=\tan \theta $ gives 
	\begin{align*}
	\phi_{\la}(t)
&=\phi_{\la}^{0,-2n}(t)=\frac{(\cosh t)^{2n}}{\pi}\int_{\bR}\bigg(\frac{1+ix}{1-ix}\bigg)^{-n}\,\frac{(e^{t}+ixe^{-t})^{2n}(1+x^2)^{-1/2+i\la/2}}{(e^{2t}+x^2 e^{-2t})^{n+1/2+i\la/2}}
	\,dx\\
	&=e^{(-i\la-1)t}\,\frac{(\cosh t)^{2n}}{\pi}\int_{\bR}\bigg(\frac{1+ix}{1-ix}\bigg)^{-n}\,\frac{(1+ixe^{-2t})^{2n}(1+x^2)^{-1/2+i\la/2}}{(1+x^2 e^{-4t})^{n+1/2+i\la/2}}
	\,dx
	\end{align*}
	Consequently,
	\begin{equation*}
	\begin{split}
	&e^{(i\la+(-2n+1))t}\phi_{\la}(t)\\
	&=(\cosh t)^{2n}e^{-2n t}\frac{1}{\pi}\int_{\bR}\frac{(1+x^2e^{-4t})^{(i\la+1)/2}}{(1+x^2)^{(1-i\la)/2}}
	\bigg(\frac{1+ix}{1-ix}\bigg)^{-n}\bigg(\frac{1+ixe^{-2t}}{1-ixe^{-2t}}\bigg)^{n}\,dx.
	\end{split}
\end{equation*}
	Thus, if $n\geq 0,$ then \eqref{eq:c_comp_def} and the dominated convergence theorem produce
	\begin{align*}c(-\la)=&\frac{1}{2^{2n} \pi}\int_{\bR}\frac{1}{(1+x^2)^{(1-i\la)/2}}
	\bigg(\frac{1+ix}{1-ix}\bigg)^{-n}\,dx\\
	=&\frac{1}{2^{2n}\pi}\int_{\bR}(1+ix)^{(i\la-1)/2-n}
	(1-ix)^{(i\la-1)/2+n}\,dx:=\frac{1}{2^{2n}}\ell(n,\la),\end{align*}
	for $0<\Ima \la .$ 
	
	Integration by parts gives
	\begin{align*}
	\ell(n,\la)&=\frac{i}{
		\pi((i\la-1)/2-n+1)}\int_{\bR}(1+ix)^{(i\la-1)/2-n+1}\,\frac{d}{dx}
	(1-ix)^{(i\la-1)/2+n}\,dx\\
	&=\frac{(i\la-1)/2+n}{\pi((i\la-1)/2-n+1)}\int_{\bR}(1+ix)^{(i\la-1)/2-n+1}\,
	(1-ix)^{(i\la-1)/2+n-1}\,dx
	\\&=\frac{(i\la-1)/2+n}{((i\la-1)/2-n+1)}\ell(n-1,\la).
	\end{align*}

	Iterating the above we obtain
	\begin{align*}2^{2n}c(-\la)&=Q_n(-\la/2)\ell(0,\la)=Q_n(-\la/2)\frac1{\pi}\int_{\RR}(1+x^2)^{-1/2+i\la/2}\,dx=\\
	&=Q_n(-\la/2)\frac1{\pi}\int_{0}^{\infty}(1+x)^{-1/2+i\la/2}x^{-1/2}\,dx=Q_n(-\la/2)\frac{\Gamma(-i\la/2)}{\Gamma(1/2-i\la/2)}\end{align*}
	when $n$ is an integer and
		\begin{align*}2^{2n}c(-\la)&=Q_n(-\la/2)\ell(1/2,\la)=Q_n(-\la)\frac1{\pi}\int_{\RR}(1+x^2)^{-1+i\la/2}(1-ix)\,dx=\\
		&=Q_n(-\la/2)\frac1{\pi}\int_{0}^{\infty}(1+x)^{-1+i\la/2}x^{-1/2}\,dx=Q_n(-\la/2)\frac{\Gamma(1/2-i\la/2)}{\Gamma(1-i\la/2)}\end{align*}
	when $n+1/2$ is an integer. Recall that $Q_n$ is defined by \eqref{eq:Qndef}. Thus, recalling the definition of $c_n$ in \eqref{eq:cndef} we obtain \eqref{eq:ccnform}.
	
	 Since $c_n(\la)$, $\Phi(\la)$ and $\phi_{\la}$ are analytic for $|\Ima \la|<1-\ve$ using \eqref{eq:decomphi1} and \eqref{eq:ccnform} we arrive at
	\begin{equation*}
	\phi_{\la}(t)=2^{-2n}c_n(\la/2)\Phi_{\la}(t)+2^{-2n}c_n(-\la/2)\Phi_{-\la}(t),\qquad |\Ima \la|<1-\ve.\end{equation*}
	In view of \eqref{eq:sfJacForm} the above formula leads in turn to \eqref{eq:spher_decom_large_t}. The proof of Lemma \ref{lem:spher_decom_large_t} is thus completed.
\end{proof}
\begin{remark1}
The computations at the end of the proof of Lemma \ref{lem:spher_decom_large_t} allow us to justify the first part of Proposition \ref{pro:Gelf}, i.e. to determine the Gelfand spectrum of $L^1(G)^K$  (which is the set of bounded spherical functions).

 Namely, in view of \eqref{eq:ccnform} a combination of \eqref{eq:c_comp_def} and \eqref{eq:sfJacForm} leads to
$$
\lim_{t\to \infty} e^{(1/2+i\la)t} \zeta_{n,1/2+i\la}(a_t)=c_{|n|}(-\la) ,\qquad \Ima \la >0.$$
Hence, writing $s=1/2+i\la$ we have
\begin{equation}
\label{eq:remimpl}
\lim_{t\to \infty} e^{st} \zeta_{n,s}(a_t)=c_{|n|}(i(s-1/2)) ,\qquad \Real s <1/2.
\end{equation}
Thus, clearly $\zeta_{n,s}=\zeta_{n,1-s}$ is bounded for $\Real s\in [0,1].$
Moreover, \eqref{eq:remimpl} implies that the spherical function $\zeta_{n,s}$ is unbounded for $\Real s <0$ unless $c_{|n|}(i(s-1/2))=0.$ According to \eqref{eq:cndef} this happens precisely when $s$ satisfies $s-|n|\in \ZZ$  and $-|n|+1\le s\le 0.$ Now, using the identity $\zns=\zeta_{n,1-s}$ we conclude that $\zns$ is unbounded for $\Real(s)\not\in [0,1]$ unless $s-|n|\in \ZZ$ and $-|n|+1\le s\le |n|.$ 
\end{remark1}
\begin{remark2}
	A closer look at the proof of Lemma \ref{lem:spher_decom_large_t} reveals that, if we restrict to $t\ge C(n^2+1),$ then the constant $C_{\varepsilon,n,j}$ is in fact independent of $n.$  
\end{remark2}

\section{Multipliers of $L$ on $V_n^p$}

  It follows from \cite{NeSt} that $L$, being left-invariant and hypoelliptic, is essentially self-adjoint\footnote{In reality,  \cite{NeSt} only considers left-invariant elliptic operators. However, it has been observed in \cite{HuJeLu} that the same proof also applies with ellipticity replaced by hypellipticity.}.  Its self-adjoint extension, with domain
  $D(L)$ equal to the set of $f\in L^2$ such that the $Lf$, defined in the sense of distributions, is also in $L^2$, will also be denoted by $L$. 

We call $L_n$ the restriction of $L$ to $V^2_n$, defined on
$$
D(L_n)=D(L)\cap V^2_n\ .
$$
 

From the Plancherel formula \eqref{plancherel} and Proposition \ref{central formulas} we easily obtain a characterization of $L^2$ spectral multipliers of $L_n.$  In what follows we denote
\begin{equation}
\label{eq:nun}
\nu_n(\la)=\nu^+(\la), \textrm{ if } n\in\ZZ, \qquad\textrm{and}\qquad \nu_n(\la)=\nu^-(\la),\textrm{ if }n\in\frac12+\ZZ.\end{equation}  
Recall that $\nu^{\pm}$ is given by \eqref{eq:nupmn}, namely $\nu^{+}(\la)=\la \tanh \pi \la$  and $\nu^{-}(\la)=\la \coth \pi \la$.
\begin{pro}\label{spectrumL_n}
The $L^2$ spectrum of $L_n$ is the set
\begin{equation*}
\Delta'_n=\Big[\frac14+n^2,+\infty\Big)\cup\big\{\gamma(s)+n^2:s\in D_n\big\}\ .
\end{equation*}
For any bounded Borel function $m$ on $\Delta'_n$, the operator
 $m(L_n):V^2_n\longrightarrow V^2_n$ is given by $m(L_n)f=f*\Phi$,
where $\Phi\in A_n$ is defined by the formula
\begin{equation}\label{Phi<->m}
\begin{split}
\Phi
&=
\frac1{2\pi}\int_0^{\infty}
m(n^2+\la^2+1/4)\zeta_{n,1/2+i\lambda}\,\nu_n(\la)\,d\la\\&\qquad +\frac1{2\pi}\sum_{s\in
D_n}\big(s-\frac12\big)m(n^2+\gamma(s))\zeta_{n,s}\ .
\end{split}
\end{equation}

Conversely, for any distribution $\Phi\in A_n$ defining a bounded convolution operator on $V^2_n$, there is a bounded Borel function $m$ on $\Delta'_n$, unique up to sets of $\nu_n$-measure zero, such that 
$$
f*\Phi=m(L_n)f\ ,\qquad \forall\,f\in V_n.
$$
\end{pro}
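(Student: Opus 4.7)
The assertion is the spectral theorem for $L_n$ repackaged via the $K$-central Plancherel formula of Proposition~\ref{central formulas}. I would proceed in three stages: identify $\sigma(L_n)$ representation theoretically, prove the forward direction $m\leadsto\Phi$ by a direct computation of $\pi(\Phi)$ in each irreducible appearing in the Plancherel support, and close the converse via spherical Plancherel injectivity on $A_n$.

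\textbf{Spectrum.} For $f\in V_n^2$, the substitution $x\mapsto xu_\theta$ in $\pi(f)=\int_G f(x)\pi(x^{-1})\,dx$ combined with the right $K$-type condition $f(xu_\theta)=e^{in\theta}f(x)$ yields $\pi(u_\theta)\pi(f)=e^{in\theta}\pi(f)$, so the range of $\pi(f)$ lies in $\CC v_n^\pi$ (and is $\{0\}$ when $n\notin E^\pi$). Since $L=\Omega-X^2$, the identities $d\pi(\Omega)=\omega(\pi)I$ and $d\pi(X)v_n^\pi=inv_n^\pi$ give $d\pi(L)v_n^\pi=(\omega(\pi)+n^2)v_n^\pi$, hence $\pi(Lf)=(\omega(\pi)+n^2)\pi(f)$. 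Plugging into \eqref{plancherel} shows that the spectral measure of $L_n$ is the push-forward of $\|\pi(f)\|_{HS}^2\,d\mu(\pi)$ under $\pi\mapsto\omega(\pi)+n^2$. Running through the four series with the constraint $n\in E^\pi$ produces exactly the closed set $[n^2+1/4,\infty)\cup\{n^2+\gamma(s):s\in D_n\}=\Delta'_n$.

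\textbf{Forward direction.} Given bounded Borel $m$ on $\Delta'_n$, define $\Phi\in A_n$ by \eqref{Phi<->m}, i.e.\ as the inverse $K$-central spherical transform of $\zeta_{n,s}\mapsto m(n^2+\gamma(s))$ on $\Sigma_n$, so that $\hat\Phi(n,1/2+i\lambda)=m(n^2+1/4+\lambda^2)$ and $\hat\Phi(n,s)=m(n^2+\gamma(s))$ for $s\in D_n$. To identify $f*\Phi$ with $m(L_n)f$, I would compute $\pi(\Phi)$. The same substitution as above gives $\pi(u_\theta)\pi(\Phi)=e^{in\theta}\pi(\Phi)$, while $K$-centrality $\Phi(uxu^{-1})=\Phi(x)$ yields $[\pi(u),\pi(\Phi)]=0$ for $u\in K$ via a change of variable $x=uy$ versus $x=yu$. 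Multiplicity one of the $K$-types in $\pi$ (Proposition~\ref{strongGelfand}(iii)) then forces $\pi(\Phi)v_k^\pi=\delta_{k,n}\,c_\pi v_n^\pi$ with $c_\pi=\int_G \Phi(x)\,\eta^\pi_{nn}(x^{-1})\,dx$. Because $\eta^\pi_{nn}=\zeta_{n,s(\pi)}$ for a parameter $s(\pi)$ satisfying $\gamma(s(\pi))=\omega(\pi)$, this equals $\hat\Phi(n,s(\pi))=m(\omega(\pi)+n^2)$. Combining with $\pi(f*\Phi)=\pi(\Phi)\pi(f)$ and the range restriction on $\pi(f)$ from the previous step, $\pi(f*\Phi)=m(\omega(\pi)+n^2)\pi(f)=\pi(m(L_n)f)$, and Plancherel injectivity concludes $f*\Phi=m(L_n)f$.

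\textbf{Converse and main obstacle.} If $\Phi\in A_n$ defines a bounded convolution on $V_n^2$, the same computation shows that $s\mapsto\hat\Phi(n,s)$ lies in $L^\infty(\Sigma_n,\tilde\nu)$ with norm controlled by $\|T_\Phi\|$; pulling back along $s\mapsto n^2+\gamma(s)$, which is well defined modulo the symmetry $\zeta_{n,s}=\zeta_{n,1-s}$, produces the desired $m$. The forward direction applied to this $m$ recovers $\Phi$ by spherical Plancherel injectivity on $A_n$, and uniqueness modulo $\nu_n$-null sets is the same injectivity statement on the spectral side. The main technical nuisance is that for generic bounded Borel $m$ the integrals in \eqref{Phi<->m} do not converge absolutely: the continuous spherical functions $\zeta_{n,1/2+i\lambda}$ are merely bounded and $\nu_n$ grows linearly, so $\Phi$ exists only as a distribution on $G$. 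I would handle this by first carrying out every step above on the dense subspace $V_n^2\cap\mD(G)$, where $\pi(f)$ is rapidly decaying in $\pi$ and every integral is absolutely convergent, and then extending by $L^2$-continuity in $f$ together with a bounded-pointwise-convergence approximation on the multiplier side.
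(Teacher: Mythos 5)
Your proposal is correct and takes essentially the same route as the paper, which presents the proposition as a direct consequence of the Plancherel formula \eqref{plancherel} and the $K$-central Plancherel--Godement and inversion formulas of Proposition \ref{central formulas}, using exactly the facts you exploit: for $f\in V_n^2$ the operator $\pi(f)$ has range in $\CC v_n^\pi$, $L$ acts there by $\omega(\pi)+n^2$, and $K$-centrality plus multiplicity-freeness make $\pi(\Phi)$ a scalar on that line. Your representation-theoretic spelling-out, including the dense-subspace and approximation step for general bounded Borel $m$ and the factorization of the fiberwise scalar through $\pi\mapsto\omega(\pi)+n^2$ modulo $s\leftrightarrow 1-s$, is precisely the argument the paper leaves implicit.
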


Notice that the half-line $\big[\frac14+n^2,+\infty)$ may include some of the points $\gamma(s)+n^2$ with $s\in D_n$, e.g., $\Delta'_n= \big[\frac14+n^2,+\infty)$ if $n=0,\pm\frac12$. Nonetheless, the sum in \eqref{Phi<->m} is extended to all of them.

Also notice that convergence of the integral in \eqref{Phi<->m} is meant in the sense of distributions. However,  mild decay assumptions on $m$ as $\lambda\to+\infty$, e.g, $m(\lambda)=O(\lambda^{-2-\epsilon})$, imply pointwise convergence.

The formula \eqref{Phi<->m} gives the splitting $\Phi=\Phi^{cont}+\Phi^{disc}$ with
\begin{equation}
\label{eq:Phisplit}
\begin{split} 
\Phi^{cont}(x)&:=(2\pi)^{-1}\int_{0}^{\infty}m(n^2+\la^2+1/4)\zeta_{n,1/2+i\lambda}(x)\,\nu_n(\la)\,d\la,\\
\Phi^{disc}(x)&:=(2\pi)^{-1}\sum_{s\in
	D_n}\big(s-\frac12\big)m(n^2+\gamma(s))\zeta_{n,s}(x),
\end{split}
\end{equation}
where $x\in G.$ Note that both $\Phi^{cont}$ and $\Phi^{disc}$ belong to $A_n$.
\medskip

In the reminder of this section we focus on the boundedness of $m(L_n)$ on $V_n^p,$ for  $p\neq 2,$ $1<p<\infty$. For notational reasons, we prefer to extend the operator to all of $L^p$ considering the composition, for a bounded function $m$ on $\Delta'_n$,
$$
T_m:=m(L_n)\mP_n= m(L_n\mP_n)\ .
$$
Though the dependence of $T_m$ on $n$ is not explicit, we will work with a fixed $n\in \frac12 \bZ$  till the end of this section.  Notice that $T_mf=f*\Phi$, with $\Phi$ given by \eqref{Phi<->m}, and that  $T_m$ is bounded on $L^2$ if and only if $m$ is bounded on $\Delta'_n$.

Throughout the paper we set
\begin{equation}\label{eq:mdef}
m_n(s):=m(n^2+\gamma(s)),\qquad s\in S_{0}\cup D_n.
\end{equation}

First note the following proposition which reduces to \cite[Theorem 1]{CS} of Clerc and Stein when $n=0.$
\begin{pro}
	\label{pro:CleSte} Assume that $T_m$ is bounded on $L^p,$ for some $p\neq 2,$ $1<p<\infty.$ Then $m_n,$ initially defined on $S_0\cup D_n,$ extends to a bounded holomorphic function in $\Int S_{\delta(p)}.$ Moreover, we have
	\begin{equation}
	\label{eq:CleSte}
	\|m_n\|_{H^{\infty}(\Int(S_{\delta(p)}))}\le \|T_m\|_{L^p\to L^p}.
	\end{equation}
\end{pro}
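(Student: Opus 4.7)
The strategy is to construct a bounded holomorphic extension $M_n$ of $m_n$ on $\Int S_{\delta(p)}$ via an eigenfunction identity. First I reduce to $p>2$: from \eqref{eq:Tsche_form_0} one reads $\zns(x^{-1})=\zns(x)$ and $\overline{\zns(x)}=\zeta_{n,\bar s}(x)$, and together with $\zns=\zeta_{n,1-s}$ this gives that the kernel $\Phi^*(x):=\overline{\Phi(x^{-1})}$ of the formal adjoint agrees with the kernel associated by \eqref{Phi<->m} to $\bar m$. Thus $T_m^{\,*}=T_{\bar m}$ is bounded on $L^{p'}$ with the same operator norm. A bounded holomorphic extension $\tilde M_n$ of $\bar m_n$ on $\Int S_{\delta(p')}=\Int S_{\delta(p)}$ then yields the required extension $M_n(s):=\overline{\tilde M_n(\bar s)}$ of $m_n$ with the same sup norm, so assume henceforth $p>2$.

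For $s\in \Int S_{\delta(p)}$, Lemma \ref{cor:large_t} gives $\zns\in L^p(G)$, and since $\Phi\in \Cvp{G}$ with $\|\Phi\|_{\Cvp{G}}=\|T_m\|_{L^p\to L^p}$,
\[
\|\zns*\Phi\|_{L^p(G)}\le \|T_m\|_{L^p\to L^p}\,\|\zns\|_{L^p(G)}.
\]
Right convolution commutes with every left-invariant differential operator and preserves the $A_n$-transformation law, so since $\zns,\Phi\in A_n$ the convolution $\zns*\Phi$ also lies in $A_n$ and is a joint eigenfunction of $(L,-iX)$ with the same eigenvalues $(\gamma(s)+n^2,n)$ as $\zns$. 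The left-invariant operator $L-X^2=-(X^2+Y_1^2+Y_2^2)$ is elliptic, so $\zns*\Phi$ admits a smooth representative. By the one-dimensionality of smooth joint eigenfunctions in $A_n$ with prescribed eigenvalues (reducing via $A_n$-covariance and the $KA^+K$ decomposition to a second-order ODE for $f(a_t)$ whose solutions analytic at $t=0$ form a one-dimensional subspace spanned by $\zns$), one obtains
\[
\zns*\Phi = M_n(s)\,\zns,\qquad M_n(s):=(\zns*\Phi)(e),
\]
and the norm inequality together with $\zns(e)=1$ gives $|M_n(s)|\le \|T_m\|_{L^p\to L^p}$.

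Holomorphicity of $M_n$ follows from the fact that $s\mapsto \zns$ is a holomorphic $L^p$-valued map (clear from \eqref{eq:Tsche_form_0} combined with dominated convergence based on the majorant \eqref{eq:nto0}), hence so is $s\mapsto \zns*\Phi$; pairing against any $\psi\in \mD(G)$ for which $\langle \zns,\psi\rangle\ne 0$ near a reference point expresses $M_n$ locally as a ratio of two holomorphic functions with nonvanishing denominator. The identity $T_m\zns = M_n(s)\zns$ exhibits $M_n(s)$ as the eigenvalue of $T_m$ on the generalized eigenfunction $\zns$, so Proposition \ref{spectrumL_n} forces $M_n=m_n$ on $S_0$ $\nu_n$-a.e.; $M_n$ is thus the sought bounded holomorphic extension of $m_n$. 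The main obstacle is the one-dimensionality of the joint eigenspace used to define $M_n(s)$, which rests on the classical analysis of the radial Jacobi-type ODE on $A^+$, selecting among its two independent solutions the one that extends smoothly across the identity.
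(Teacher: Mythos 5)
Your proposal is correct in substance and follows the same Clerc--Stein strategy as the paper: reduce to $p>2$ by passing to $T_m^*=T_{\bar m}$, use Lemma \ref{cor:large_t} to place $\zns$ in $L^p$ for $s\in S_{\delta(p)}$, exploit that $\zns$ is an eigenfunction of the convolution operator, and obtain holomorphy in $s$ together with the bound \eqref{eq:CleSte}. The implementation differs in the middle step: the paper never forms $\zns*\Phi$ nor discusses eigenspaces; it takes the relation $T_m\zns=m_n(s)\zns$ for granted, pairs it against arbitrary $f\in L^{p'}$ through $\langle\zns,T_m^*f\rangle=m_n(s)\langle\zns,f\rangle$, and gets holomorphy of both pairings by Morera's theorem, reading off the extension and the sup bound by duality. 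You instead construct the extension explicitly as the eigenvalue $M_n(s)=(\zns*\Phi)(e)$, proving the eigenrelation via elliptic regularity and one-dimensionality of the smooth joint eigenspace in $A_n$, and get the bound directly from $\|\zns*\Phi\|_{L^p}=|M_n(s)|\,\|\zns\|_{L^p}$; this is more self-contained on exactly the point the paper leaves implicit, at the cost of the ODE uniqueness argument and of the a.e.\ identification of $M_n$ with $m_n$ on $S_0$ via Proposition \ref{spectrumL_n}, which you state rather briskly (the paper's proof has the same implicit identification).

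Two of your justifications need repair, although the conclusions they serve are true. First, the identities $\zns(x^{-1})=\zns(x)$ and $\overline{\zns(x)}=\zeta_{n,\bar s}(x)$ hold on $A$ but fail on $G$ when $n\neq0$, because the $K$-type factor $e^{in(\psi+\theta)}$ flips sign in the exponent under $x\mapsto x^{-1}$ and under conjugation; what is true, and all you need, is the composite identity $\overline{\zns(x^{-1})}=\zeta_{n,\bar s}(x)$, i.e.\ $\zns^*=\zeta_{n,\bar s}$, which for $\Real s=1/2$ and for $s\in D_n$ gives $\zns^*=\zns$ and hence $T_m^*=T_{\bar m}$ with equal norms. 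Second, right convolution does not commute with left-invariant differential operators in the form you invoke: for left-invariant $D$ one has $D(f*\Phi)=f*(D\Phi)$, not $(Df)*\Phi$. The eigenvalue claim nevertheless survives: since $\Phi\in A_n$ one has $X\Phi=in\Phi$, so $X(\zns*\Phi)=\zns*(X\Phi)=in\,\zns*\Phi$, while the Casimir $\Omega=L+X^2$ is bi-invariant, so $\Omega(\zns*\Phi)=(\Omega\zns)*\Phi=\gamma(s)\,\zns*\Phi$; combining, $L(\zns*\Phi)=(\gamma(s)+n^2)\,\zns*\Phi$. (Alternatively, the functional equation for spherical functions yields $\zns*\Phi=\widehat\Phi(\zns)\,\zns$ directly for $K$-central kernels, bypassing the eigenspace argument altogether; in either version one should also say a word about applying the convolution with the distribution $\Phi$ to the non-compactly supported $\zns\in L^p$ via truncation and the $\Cvp{G}$ bound.) With these repairs your argument is complete.
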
 
\begin{proof} 
		Assume first that $p>2.$ Then the boundedness of $T_m$ on $L^p$ implies the boundedness of $T_m^*=T_{\overline m}$ on $V_n^{p'};$ moreover, $\|T_m\|_{L^p\to L^p}=\|T_m^*\|_{L^{p'}\to L^{p'}}.$ By Corollary \ref{cor:large_t} we have $\zns\in L^{p},$ for $s\in S_{\delta(p)}.$ Thus, for $f\in L^{p'}$ we have 	$\zns\cdot T_m^* f  \in L^1$ and
	$	\langle  \zns, T_m^* f\rangle_{L^2}=\langle T_m \zns,f\rangle_{L^2}.$
	Since $T_m\, \zns=m_n(s)\, \zns$ we arrive at
	$$		\langle  \zns, T_m^* f\rangle_{L^2}=m_n(s)	\langle \zns,f\rangle_{L^2},\qquad f\in L^{p'}.$$
	Now, using Morera's theorem and Corollary \ref{cor:large_t} it is easy to see that both $	\langle  \zns, T_m^* f\rangle_{L^2}$ and $\langle \zns,f\rangle_{L^2}$ are holomorphic functions on $\Int S_{\delta(p)}.$
	Since $f\in L^{p'}$ was arbitrary we conclude that $m_n(s)$ extends to a holomorphic function and that \eqref{eq:CleSte} holds. 
	
	Assume now that $1<p<2$ and that $T_m$ is bounded on $L^p.$ Then $T_m^*=T_{\overline m}$ is bounded on $L^{p'}.$ Therefore, in view of the previous paragraph the function $\overline{m_n}$ has a holomorphic extension to $\Int S_{\delta(p)}.$ Consequently, $s\mapsto m_n(\bar{s})$ is also a holomorphic function on $\Int S_{\delta(p)}.$ Moreover, $m_n(\bar{s})=m_n(s),$ when $s\in S_0 \cup D_n,$ hence, $m_n(\bar{s})$ extends $m_n,$ and the proof is completed. 
\end{proof} 

Taking into account Proposition \ref{pro:CleSte} in what follows we assume that $m_n$ extends to a bounded holomorphic function in $\Int S_{\delta(p)}.$ We will also need to impose that $m$ has continuous derivatives on the boundary of $S_{\delta(p)}$ up to the order $3.$
Recall that in \eqref{eq:MHU} we have defined
\begin{equation}
\label{eq:MH}
\|m_n\|_{MH(S_{\delta(p)},2)}=\max_{j=0,1,2}\,\sup_{\la\in S_{\delta(p)}}(1+|\la|)^{j}\bigg|\frac{d^{j}}{d\la^{j}}m_n(\la)\bigg|,
\end{equation}
i.e.\ $\|m_n\|_{MH(S_{\delta(p)},2)}$ is the Mikhlin-H\"ormander norm of $m_n$ on $S_{\delta(p)}$ of order $3.$

The main theorem we prove is a Mikhlin-H\"ormander type multiplier theorem for $T_m.$ In the case when $n=0$ it coincides with multiplier results on the symmetric space $G/ K$ obtained by Anker \cite{A1} and Stanton and Tomas \cite{StTo_1}. 
\begin{thm}
	\label{thm:multVnp}
Fix $1<p<\infty,$ $p\neq 2.$ Assume that the function $m_n$ given by \eqref{eq:mdef} satisfies $\|m_n\|_{MH(S_{\delta(p)},2)}<\infty.$ Then $T_m$ is a bounded operator on $L^p.$ Moreover,
\begin{equation}
\label{eq:multVnp}
\|T_m f\|_{L^p}\leq C_{p,n}\,\bigg(\|m_n\|_{MH(S_{\delta(p)},2)}+ \sum_{s\in D_n}s\,|m_n(s)|\bigg)\|f\|_{L^p}
\end{equation}
\end{thm}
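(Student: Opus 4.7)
The plan is to write $T_m f = f * \Phi$ with $\Phi = \Phi^{cont}+\Phi^{disc}$ as in \eqref{eq:Phisplit}, and to further split the continuous part by introducing a smooth cutoff $\chi \in C_c^\infty(\RR)$ with $\chi \equiv 1$ on $[-1/2,1/2]$ and $\supp \chi \subset [-1,1]$ acting on the Cartan coordinate $t$: set $\Phi^{loc}(u_\psi a_t u_\theta) := \chi(t)\Phi^{cont}(u_\psi a_t u_\theta)$ and $\Phi^{glob} := \Phi^{cont} - \Phi^{loc}$. It suffices to prove \eqref{eq:multVnp} separately for the three convolutors $\Phi^{disc}$, $\Phi^{loc}$, $\Phi^{glob}$; by duality (using $T_m^* = T_{\overline m}$ and the fact that $\overline{m_n}$ has the same Mikhlin norm as $m_n$) one can restrict attention to, say, $1 < p < 2$.

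The discrete part is immediate from Lemma \ref{lem:Lp_disc_part}: $\Phi^{disc}$ is a linear combination of at most $|n|$ spherical functions $\zeta_{n,s}$, $s \in D_n$, each convolving boundedly on $L^p$ with norm $\leq C_p(1+|n|)$. Pulling out the coefficients $(s-1/2)m_n(s)$ gives $\|f * \Phi^{disc}\|_p \lesssim_p (1+|n|)\sum_{s\in D_n} s\,|m_n(s)|\,\|f\|_p$, which is the second term on the right-hand side of \eqref{eq:multVnp}.

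For $\Phi^{loc}$ I would substitute the small-$t$ expansion of Lemma \ref{lem:spher_decom_small_t} into the continuous part of \eqref{eq:Phisplit}. Up to an $L^1$-error of size $(1+|n|)^6 \|m_n\|_\infty$, this represents $\Phi^{loc}$, in the $A^+$-coordinate, as a sum over $j=0,1,2$ of
$$\chi(t)\Big(\tfrac{t}{\sinh t}\Big)^{1/2} t^{2j} b_j^n(t) \int_0^\infty m_n(\tfrac12+i\lambda)\,\mJ_j(\lambda t)\,\nu_n(\lambda)\,d\lambda.$$
Each inner integral is essentially a one-dimensional Fourier or Hankel transform of a Mikhlin symbol on $\RR$; combining the Euclidean Mikhlin-H\"ormander theorem with a transfer to a local chart and using the polynomial growth $\nu_n(\lambda) \lesssim \lambda$, one obtains $L^p$-boundedness of this part with a constant polynomial in $n$ and controlled by $\|m_n\|_{MH(S_{\delta(p)},2)}$.

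The main obstacle is the global piece $\Phi^{glob}$. Here I would insert the expansion of Lemma \ref{lem:spher_decom_large_t} and exploit the holomorphy of $m_n$ on $\Int S_{\delta(p)}$ to shift the $\lambda$-contour to $\Ima\lambda = \mp \delta(p)$ for the two exponential branches $e^{\pm i\lambda t}$, gaining a decay factor $e^{-\delta(p) t}$ from each -- this is the mechanism of Stanton-Tomas and Clerc-Stein in the $n=0$ case. The Mikhlin bounds on $m_n$, together with the derivative estimates \eqref{eq:cnbound} and \eqref{eq:a_large_est}, justify the shift and allow enough integrations by parts in $\lambda$ to make the shifted integral absolutely convergent, producing pointwise kernel bounds that combine the intrinsic $e^{-t/2}$-decay of $\zeta_{n,1/2+i\lambda}(a_t)$ with the gained $e^{-\delta(p)t}$ factor. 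The $L^p$-bound for the associated operator then follows from these pointwise estimates by a Herz-type criterion, i.e.\ by checking that $\Phi^{glob}$ defines an element of $\Cvp{G}$ after projection onto $V_n$. The constant extracted at this stage inherits from the recursion \eqref{eq:GamkEst} of Lemma \ref{lem:spher_decom_large_t} the $n^2$-size exponent $b_{\alpha,n}$, and this is what leads to growth of order $\Gamma(Cn^2)$ in $n$ -- exactly the growth flagged in the footnote of the introduction, and the chief quantitative weakness one would hope to eliminate.
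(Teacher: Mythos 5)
Your proposal follows essentially the same route as the paper: the same splitting into discrete, continuous-local and continuous-global parts (with the cutoff $\chi$ in the Cartan variable $t$), Lemma \ref{lem:Lp_disc_part} for the discrete part, the expansion of Lemma \ref{lem:spher_decom_small_t} plus one-dimensional Fourier/Mikhlin analysis for the local part, and a contour shift to $\Ima\la=\mp\delta(p)$ with two integrations by parts followed by the Herz majorizing principle for the global part, with the $\Gamma(Cn^2)$ growth entering exactly where you say. The only cosmetic difference is that the paper formalizes your ``transfer to a local chart'' for the local piece via the Coifman--Weiss transference principle along the $A$-direction (Lemma \ref{lem:CWtrans}), and it first reduces to multipliers with rapid decay by the approximation $m^{\varepsilon}(z)=m(z)e^{-\varepsilon z}$ so that the kernels are genuine bounded functions.
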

\begin{remark1}
In \cite[Theorem 8]{I2} Ionescu proved that for $n=0$ one can replace \eqref{eq:MH} with $$\max_{j=0,1,2}\,\sup_{\la\in S_{\delta(p)}}(|\la\pm i\delta(p)|)^{j}\bigg|\frac{d^{j}}{d\la^{j}}m_0(\la)\bigg|<\infty,$$
which is clearly weaker than $\|m_n\|_{MH(S_{\delta(p)},2)}<\infty$. 
An important ingredient in the proof of \cite[Theorem 8]{I2} was a connection between Cartan and Iwasawa coordinates. This is very useful on $V_0,$ but not so much on $V_n$ for $n\neq 0.$ The problem is that the convolution kernels of the operators $T_m$ are no more bi-$K$ invariant. Therefore the argument from \cite{I2} does not seem to work in our setting. 
\end{remark1}
\begin{remark2}
By the change of variable $z=\gamma(s)+n^2$ one can restate the condition \eqref{eq:MH} into a Mikhlin-H\"ormander condition for $m(z)$ on the image $\gamma[S_{\delta(p)}]+n^2$ (which is a shifted parabola in the right half plane). However, on symmetric spaces (of which the space $V_0$ is a particular instance) it is customary to state the Mikhlin-H\"ormander condition on a strip rather than a parabola. Therefore we decided to follow this convention also for general $n\in\ZZ/2.$
\end{remark2}

Before proceeding to the proof of Theorem \ref{thm:multVnp} we remark that an approximation argument shows that we may assume that $m_n(s)$ has rapid decay when $|s|\to \infty.$ Indeed, let us replace $m$ with  $$m^{\varepsilon}(z)=m(z)e^{-\varepsilon z},\qquad z\in \Delta'_n.$$
Then, since $T_{m^{\varepsilon}}=T_m \,e^{-\varepsilon L},$ and the heat semigroup $\{e^{-tL}\}_{t>0}$ is strongly continuous on $L^p,$  we have $$\lim_{\ve\to0^+}T_{m^{\varepsilon}}=T_m,\textrm{ strongly in $L^p$.}$$
Since also
$$\lim_{\varepsilon\to 0^+}\|(m^{\varepsilon})_n\|_{MH(S_{\delta(p)},2)}=\|m_n\|_{MH(S_{\delta(p)},2)},$$ coming back to \eqref{eq:multVnp} we may indeed assume that $m_n$ has the desired decay.

In view of the previous paragraph from now on we assume that $m_n(s)$ vanishes exponentially when $|s|\to \infty.$

The first step in the proof of Theorem \ref{thm:multVnp} is the splitting $T_m f= T^{cont}f+T^{disc }f$ where
$$T^{cont}f=f*\Phi^{cont},\qquad T^{disc }f=f*\Phi^{disc}$$
with the distributions $\Phi^{cont}$ and $\Phi^{disc}$ given by \eqref{eq:Phisplit}, i.e.
$$\Phi^{cont}(x)=(2\pi)^{-1}\int_{0}^{\infty}m_n(1/2+i\la)\,\zeta_{n,1/2+i\lambda}(x)\,\nu_n(\la)\,d\la$$
and
$$\Phi^{disc}(x)=(2\pi)^{-1}\sum_{s\in
	D_n}\big(s-\frac12\big)m_n(s)\zeta_{n,s}(x).$$
We remark that by the approximation assumptions on $m_n$ both $\Phi^{cont}$ and $\Phi^{disc}$ are actually bounded functions on $G.$

Lemma \ref{lem:Lp_disc_part} gives the following bound for the discrete part.
\begin{pro}
	\label{pro:disc}
	The operator $T^{disc}f=f*\Phi^{disc}$ is bounded on $L^p;$ moreover, 
	\begin{equation*}
	\|T^{disc} f\|_{L^p}\leq C_p (1+|n|)\,\sum_{s\in D_n}s\,|m_n(s)|\,\|f\|_{L^p},\qquad 1<p<\infty. 
	\end{equation*}
\end{pro}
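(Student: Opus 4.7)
The plan is to observe that this proposition is essentially an immediate consequence of Lemma \ref{lem:Lp_disc_part} combined with the fact that $D_n$ is a \emph{finite} set. Indeed, by definition $D_n = \{s \in \tfrac12\ZZ : s-|n| \in \ZZ,\ 1 \le s \le |n|\}$, so $\#D_n \le |n|$ and no convergence issues arise in the formula for $\Phi^{disc}$.

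First, I would write out the definition of $\Phi^{disc}$ explicitly and use linearity of convolution to split
$$
T^{disc} f = f * \Phi^{disc} = \frac{1}{2\pi}\sum_{s\in D_n}\Big(s-\tfrac12\Big)m_n(s)\,(f * \zeta_{n,s}).
$$
Since the sum is finite, the triangle inequality in $L^p$ applies without any technicalities, giving
$$
\|T^{disc} f\|_{L^p} \le \frac{1}{2\pi}\sum_{s\in D_n}\Big(s-\tfrac12\Big)|m_n(s)|\,\|f * \zeta_{n,s}\|_{L^p}.
$$

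Next, I would apply Lemma \ref{lem:Lp_disc_part} term by term. That lemma provides, for each $s \in D_n$, the estimate $\|f * \zeta_{n,s}\|_{L^p} \le C_p(1+|n|)\|f\|_{L^p}$, with a constant independent of $s$. Substituting this bound into each summand yields
$$
\|T^{disc} f\|_{L^p} \le C_p(1+|n|)\sum_{s\in D_n}\Big(s-\tfrac12\Big)|m_n(s)|\,\|f\|_{L^p}.
$$
Finally, using the trivial inequality $s - 1/2 \le s$ valid for $s \ge 1$ (which is the case for $s \in D_n$), we absorb the factor and obtain the desired bound with the sum $\sum_{s\in D_n} s\,|m_n(s)|$.

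There is no real obstacle here: the entire content of the proposition has been packaged into Lemma \ref{lem:Lp_disc_part} (which in turn relies on the pointwise estimate for $\zeta_{n,s}$ in Lemma \ref{lem:spher_demi-entier} and the Kunze--Stein phenomenon together with a positivity/duality argument). Once that lemma is in hand, the discrete part reduces to a finite triangle-inequality argument.
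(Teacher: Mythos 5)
Your proposal is correct and follows exactly the route the paper intends: the paper states Proposition \ref{pro:disc} as an immediate consequence of Lemma \ref{lem:Lp_disc_part}, i.e.\ expanding $\Phi^{disc}$ as the finite sum over $D_n$, applying the triangle inequality and the lemma term by term, and bounding $s-\tfrac12\le s$. Nothing is missing.
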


In view of Proposition \ref{pro:disc} the proof of Theorem \ref{thm:multVnp} will be completed once we have the following.
\begin{pro}
	\label{pro:Cont}
Fix $1<p<\infty$ and assume that the function $m_n$ given by \eqref{eq:mdef} extends to a bounded holomorphic function in $\Int S_{\delta(p)}$ which satisfies $\|m_n\|_{MH(S_{\delta(p)},2)}<\infty.$ Then $T^{cont}$ is a bounded operator on $L^p;$ moreover
\begin{equation*}
\|T^{cont} f\|_{L^p}\leq C_{p,n}\,\big(\|m_n\|_{MH(S_{\delta(p)},2)}\big)\|f\|_{L^p}.
\end{equation*} 
\end{pro}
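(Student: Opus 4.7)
\textbf{Proof plan for Proposition \ref{pro:Cont}.}
The natural first move is to split the continuous kernel along Cartan radii. Fix a smooth cutoff $\chi\colon[0,\infty)\to[0,1]$ with $\chi\equiv 1$ on $[0,1/2]$ and $\chi\equiv 0$ on $[1,\infty)$, and write $\Phi^{cont}=\Phi^{loc}+\Phi^{glob}$ where $\Phi^{loc}$ comes from multiplying by $\chi(t)$ in Cartan coordinates and $\Phi^{glob}$ from multiplying by $1-\chi(t)$. Both factors are $K$-central since $\Phi^{cont}$ is, so each piece lies in $A_n$. Since the local kernel has compact support, the bulk of the work consists in controlling the global contribution, where the holomorphic extension hypothesis on $m_n$ is essential. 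In both parts we will eventually apply the Kunze--Stein phenomenon $L^r\ast L^p\subseteq L^p$ for $1\le r<p\le 2$ (and duality for $p>2$, using that $\Phi^{cont}$ is conjugated into itself by $f\mapsto f^*$ via the symmetry $\nu_n,m_n,\zeta_{n,s}$).

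For $\Phi^{loc}$, I plug in the local expansion of Lemma \ref{lem:spher_decom_small_t}. Each main term contributes (up to a bounded, compactly supported factor in $t$ of size $O((1+|n|)^4)$) an integral of the form
\begin{equation*}
I_j(t)=\int_0^{\infty} m_n\!\big(\tfrac12+i\la\big)\,\mJ_j(\la t)\,\nu_n(\la)\,d\la,
\end{equation*}
and the error term is controlled in $L^1_{\la}(\la\,d\la)$ by \eqref{eq:spher_decom_small_t_error}, hence gives a kernel bounded pointwise by $C(1+|n|)^6$ on $\supp\chi$, which contributes to $L^p$ trivially. The integrals $I_j(t)$ are essentially one-dimensional Fourier/Hankel transforms of a Mikhlin--H\"ormander symbol (after absorbing the polynomial growth of $\nu_n$, whose first few derivatives grow at most like $|\la|$). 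A standard Littlewood--Paley decomposition in $\la$ combined with $\|\mJ_j(\la\,\cdot\,)\|_{L^\infty}\lesssim 1$ and the Mikhlin condition $\|m_n\|_{MH(S_{\delta(p)},2)}<\infty$ yields that $\Phi^{loc}\in L^1(G)$ with norm controlled by a polynomial in $n$ times $\|m_n\|_{MH(S_{\delta(p)},2)}$; $L^p$-boundedness then follows from Young's inequality.

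For $\Phi^{glob}$, I substitute the global expansion \eqref{eq:spher_decom_large_t}. The leading term is
\begin{equation*}
(1-\chi(t))\,(2\cosh(t/2))^{-2|n|}e^{(|n|-1/2)t}\int_0^{\infty} m_n\!\big(\tfrac12+i\la\big)\,c_{|n|}(\la)\,e^{i\la t}\,\nu_n(\la)\,d\la
\end{equation*}
plus the symmetric $\la\mapsto -\la$ copy and a remainder absorbed into $a_{|n|}(\la,t)$ which, by \eqref{eq:a_large_est}, produces a factor $e^{-(1-\sigma)t}$ good enough to handle by the same scheme. The key step is to shift the contour: since $m_n$ is holomorphic and bounded in $\Int S_{\delta(p)}$ and $c_{|n|}$ is holomorphic on $|\Ima\la|<1/2$ (by Lemma \ref{lem:cnprop}), I replace the integral over $\RR_+$ by integration along $\Ima\la=-(\delta(p)-\eta)$ for small $\eta>0$, picking up no poles thanks to rapid decay of $m_n$ at infinity (which we may assume by the approximation argument preceding the proposition). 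The shift produces the additional weight $e^{-(\delta(p)-\eta)t}$, so that the global kernel is bounded pointwise in Cartan coordinates by
\begin{equation*}
C_{p,n}\,e^{-(1/2-\delta(p)+\eta)t}(1+t)\,\|m_n\|_{MH(S_{\delta(p)},2)}
\end{equation*}
after integrating against the Mikhlin--H\"ormander norm of $\la\mapsto m_n(\frac12+i\la+i(\delta(p)-\eta))\,c_{|n|}(\la-i(\delta(p)-\eta))\,\nu_n(\la-i(\delta(p)-\eta))$, where the first two factors give the decay $(1+|\la|)^{-1/2}$ from Lemma \ref{lem:cnprop} and the Mikhlin condition allows two integrations by parts in $\la$. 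Since $\delta(p)<1/2$, the Cartan weight $\sinh t$ is beaten, and $\Phi^{glob}\in L^r(G)$ for any $r$ strictly between $p'$ and $p$ (the precise value being dictated by $1/r=1-|1/p-1/2|+\kappa$); Kunze--Stein then gives $L^p$ boundedness.

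The principal obstacle is the global step: ensuring that, after the contour shift, the remaining integrand still satisfies a Mikhlin--H\"ormander bound with constants that are \emph{finite} in $n$ (Lemma \ref{lem:cnprop} provides this, at the price of a factor $(1+|n|)^6$), and that the three derivatives afforded by $\|m_n\|_{MH(S_{\delta(p)},2)}<\infty$ are enough to extract $(1+|\la|)^{-3/2}$ decay in the integrand and therefore absolute convergence and the claimed $L^r$ control. The Harish-Chandra-type c-function $c_{|n|}$ complicates this analysis compared to the $n=0$ case of Stanton--Tomas, but Lemma \ref{lem:cnprop} and Lemma \ref{lem:spher_decom_large_t} are designed precisely to keep all estimates quantitative, with an explicit (though rapidly growing) dependence on $n$.
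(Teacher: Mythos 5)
Your opening move coincides with the paper's: split $\Phi^{cont}=\Phi^{loc}+\Phi^{glo}$ by a cutoff in the Cartan variable $t$ and treat the two pieces separately, using Lemma \ref{lem:spher_decom_small_t} locally and Lemma \ref{lem:spher_decom_large_t} globally. But both halves are then closed with tools that cannot give the stated bound, so there are genuine gaps.

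\textbf{Local part.} The claim that $\Phi^{loc}\in L^1(G)$ with norm controlled by $\|m_n\|_{MH(S_{\delta(p)},2)}$ is false: a Mikhlin--H\"ormander symbol has no decay, and the local kernel has a Calder\'on--Zygmund, non-integrable singularity at the identity. Concretely, already for $n=0$ and an imaginary power of $L$, inserting the expansion of Lemma \ref{lem:spher_decom_small_t} and using the oscillation of $\mJ_0(\la t)$ against the density $\nu_n(\la)\approx\la$ gives $|\Phi^{loc}(a_t)|\approx t^{-2}$ as $t\to0$, while the Cartan density is $\sinh t\approx t$, so $\int_0^1|\Phi^{loc}(a_t)|\sinh t\,dt=\infty$ (this is just the $2$-dimensional Euclidean Mikhlin kernel in disguise). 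Hence Young's inequality can never yield \eqref{eq:Tlocest}; one must exploit cancellation of the kernel, not absolute integrability. This is exactly what the paper's Coifman--Weiss transference (Lemma \ref{lem:CWtrans}) achieves: it reduces matters to convolution on $\RR$ with $S(t)=|\sinh t|\,\Phi^{loc}(a_t)$, whose Euclidean Fourier transform is then shown to satisfy a one-dimensional Mikhlin condition (the $\mJ_0$ term requiring the subordination formula for $\mJ_0$ and the Mikhlin theorem on $\RR$; only the $\mJ_1,\mJ_2$ and error terms are honestly $L^1$). Your Littlewood--Paley remark gestures at this but does not replace it.

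\textbf{Global part.} The scheme ``pointwise bound $\Rightarrow$ $\Phi^{glo}\in L^r(G)$ $\Rightarrow$ Kunze--Stein'' fails for two separate reasons. First, the exponent you display, $e^{-(1/2-\delta(p)+\eta)t}$, is the \emph{weak} decay $\approx e^{-t/p'}$ (for $p<2$): this is what a crude absolute-value estimate of the unsplit $\zeta_{n,1/2+i\la}$ gives on any shifted contour, because one of the two $c$-function terms in \eqref{eq:spher_decom_large_t} always grows off the real axis; it is inconsistent with your own sentence claiming a gain $e^{-(\delta(p)-\eta)t}$. To obtain the needed decay $e^{-t/p}$ one must use the structure of the expansion, i.e.\ the bound \eqref{eq:globznestla} (the $\la$-version of \eqref{eq:globznest}, valid on the side $\Real s\le 1/2$ of the critical line), keeping $e^{t(1/p+i\la)}\zeta_{n,1/p+i\la}(a_t)$ as a single symbol after the shift, and two integrations by parts to get $(1+t)^{-2}$ rather than your $(1+t)$; this is Lemma \ref{lem:keyglob}. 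Second, even granting the optimal bound $|\Phi^{glo}(a_t)|\lesssim e^{-t/p}(1+t)^{-2}$, the Haar weight $\sinh t\approx e^t$ forces $\Phi^{glo}\in L^r(G)$ only for $r\ge p$, so the Kunze--Stein inclusion $L^r*L^p\subseteq L^p$, $1\le r<p\le2$, is never applicable (your proposed range of $r$ ``between $p'$ and $p$'' contains no admissible exponent); with the weak decay $e^{-t/p'}$ the situation is worse still. The paper's way out is the Herz majorizing principle: since $|\Phi^{glo}|\in A_0$, it suffices to verify the weighted condition \eqref{eq:Herz1}, $\int_0^\infty|\Phi^{glo}(a_t)|\sinh t\,e^{-t/p'}dt<\infty$, which is exactly what $e^{-t/p}(1+t)^{-2}$ delivers. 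Without replacing Young's inequality by transference in the local part and Kunze--Stein by the Herz principle (plus the correct contour/$c$-function bookkeeping) in the global part, the proposal does not prove the proposition.
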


The reminder of this section is devoted to the proof of Proposition \ref{pro:Cont}. Note that the spherical functions $\zeta_{n,1/2+i\lambda}$ are bounded for $|\Ima(\la)|< 1/2$ and are even with respect to $\la$ from the real line. Thus, using \eqref{eq:mdef}, for $x\in G$  we have
\begin{equation}
\label{eq:Phiform1}
\Phi^{cont}(x)=\frac12\,\int_{\RR}m_n(1/2+i\la)\,\zeta_{n,1/2+i\lambda}(x)\, \nu_n(\la)\,d\la.\end{equation}

The proof of Proposition \ref{pro:Cont} is based on splitting the kernel $\Phi^{cont}$ into local and global parts in the variable $a_t$ in Cartan coordinates. More precisely, let $\chi$ be a smooth even function on $\RR$ such that $\chi(t)=1$ for $|t|<1/2,$ and $\chi(t)=0,$ for $|t|>1$. For $x=u_{\psi}a_t u_{\theta}$ with $t\in \RR$ we split
$$\Phi^{cont}(x)=\chi(t)\Phi^{cont}(x)+(1-\chi(t))\Phi^{cont}(x):=\Phi^{loc}(x)+\Phi^{glo}(x).$$ Then $\Phi^{loc} $ and $\Phi^{glo}$ are still in $A_n^{\infty}.$ 

We remark that a splitting into local and global parts at the level $n^2+1$ instead of the at the level $1$ would give the explicit estimate $e^{Cn^2}$ for the constant $C_{p,n}$ from Proposition \ref{pro:Cont}. Unfortunately, this ruins the polynomial estimate for the continuous-local part.  

In the reminder of this section we treat separately the operators $T^{loc}f=f*\Phi^{loc}$ and $T^{glo}f=f*\Phi^{glo}.$

\subsection{The continuous local part}
\quad

In this section we demonstrate the bound
\begin{equation}
\label{eq:Tlocest}
\|T^{loc} f\|_{L^p}\leq C_p \, (1+|n|)^{8}\big(\|m_n\|_{MH(S_{\delta(p)},2)}\big)\|f\|_{L^p};
\end{equation}
note that the dependence on $n$ in \eqref{eq:Tlocest} is polynomial.
Here we need a Coifman-Weiss type transference result.  The proof of Lemma \ref{lem:CWtrans} is similar to the one given in   \cite[Theorem 8.7]{CWtr} by Coifman and Weiss. However, we give it for the sake of completeness.
\begin{lem}
 \label{lem:CWtrans}
Fix $1\leq p<\infty$. Let $\Psi$ be a compactly supported function which belongs to $A_n^{\infty}$. If
convolution with $S(t):=|\sinh(t)|\, \Psi(a_t)$ is a bounded operator
on $L^{p}(\RR)$ then convolution with $\Psi$ is a bounded
operator on $L^p(G).$ Moreover, we have
\begin{equation*}
\|f*\Psi\|_{L^p}\leq  \|S\|_{\Cvp{\RR}}\,\|f\|_{L^p}.
\end{equation*}
\end{lem}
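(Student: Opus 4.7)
The plan is to transfer the $G$-convolution against $\Psi$ to a one-dimensional convolution on $\RR$ against $S$, exploiting the $K$-type $n$ and $K$-centrality of $\Psi$ together with the amenability of the one-parameter subgroup $A\cong\RR$.

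First I would expand
$$f*\Psi(x)=\int_G \Psi(z)\,f(xz^{-1})\,dz$$
in Cartan coordinates $z=u_\theta a_t u_\psi$, using the symmetric version of \eqref{eq:Cartan} over $t\in\RR$ with weight $|\sinh t|/2$. Since $\Psi\in A_n$, one has $\Psi(u_\theta a_t u_\psi)=e^{in(\theta+\psi)}\Psi(a_t)$, and $z^{-1}=u_{-\psi}a_{-t}u_{-\theta}$. Substituting and recognising the kernel $S(t)=|\sinh t|\,\Psi(a_t)$ yields
\begin{equation*}
f*\Psi(x)=\tfrac12\fint_\TT\fint_\TT e^{in(\theta+\psi)}\int_\RR S(t)\,f(x u_{-\psi}a_{-t}u_{-\theta})\,dt\,d\theta\,d\psi.
\end{equation*}

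Next, I would rewrite the inner integrand as $(R_{u_{-\psi}}R_{a_{-t}}R_{u_{-\theta}}f)(x)$ and apply Minkowski's inequality in $L^p(G)$, using $|e^{in(\theta+\psi)}|=1$. The outermost right translation $R_{u_{-\psi}}$ does not depend on $t$ and is an isometry of $L^p(G)$; pulling it outside the $t$-integral and then taking the $L^p$-norm eliminates it. For each fixed $\theta$, setting $g_\theta:=R_{u_{-\theta}}f$ (so $\|g_\theta\|_{L^p}=\|f\|_{L^p}$) reduces the proof to the estimate
$$\left\|\int_\RR S(t)\,R_{a_{-t}}g_\theta\,dt\right\|_{L^p(G)}\le \|S\|_{\Cvp{\RR}}\,\|g_\theta\|_{L^p(G)}.$$

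This last bound is exactly the Coifman--Weiss amenable transference principle \cite{CWtr}: since $t\mapsto R_{a_{-t}}$ is a strongly continuous isometric representation of the amenable group $\RR$ on $L^p(G)$, convolution against a kernel on $\RR$ via this representation is controlled by the $\Cvp{\RR}$-norm of the kernel. Integrating back against $\fint_\TT\fint_\TT d\theta\,d\psi=1$ delivers the lemma. The only real obstacle is careful bookkeeping, namely matching the phases arising from $\Psi\in A_n$ with the Cartan Jacobian so that the $\RR$-side kernel is precisely $S(t)=|\sinh t|\,\Psi(a_t)$ and verifying the strong continuity needed to apply the transference principle; the extra factor $\tfrac12$ from the Cartan formula is absorbed harmlessly into the stated inequality.
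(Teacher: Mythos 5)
Your argument is correct and follows essentially the same route as the paper: expand the convolution in Cartan coordinates, use the $A_n$-equivariance of $\Psi$, eliminate the $K$-integrations via Minkowski's inequality and the fact that right translations are isometries of $L^p(G)$, and transfer the remaining convolution along $A\cong\RR$ against $S$ by the Coifman--Weiss theorem applied to the isometric representation $t\mapsto R_{a_{-t}}$. The only cosmetic difference is that the paper first replaces $f$ by $\mP_n f\in V_n^p$ and cancels the phases exactly before estimating, whereas you discard them by the triangle inequality; in both versions the factor $\tfrac12$ from the symmetric Cartan formula only improves the stated bound.
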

\begin{proof}
Since $f*\Psi=\mP_n f *\Psi$ for $\Psi\in A_n$ we may assume that $f\in V_n^p.$ Then, by the assumption that $\Psi \in A_n$ (second equality below), Minkowski's integral inequality (first inequality below), and right invariance of $dx$   we obtain
\begin{align*}
&\left\|f*\Psi\right\|_{L^p}=\bigg(\int_{G}\bigg|\int_{G} f(xy^{-1})\Psi(y)\,dy\bigg|^p\,dx\bigg)^{1/p}\\
&=\bigg(\int_{G}\bigg|\fint_{\TT}\fint_{\TT}\int_{\RR} f(xu_{\-\varphi}a_{-t}u_{-\theta})\Psi(u_{\theta}a_{t}u_{\varphi})\,|\sinh(t)|\,dt\,d\varphi\,d\theta\bigg|^p\,dx\bigg)^{1/p}\\
&=\bigg(\int_{G}\bigg|\fint_{\TT}\int_{\RR} f(xu_{\-\varphi}a_{-t})\Psi(a_{t}u_{\varphi})\,|\sinh(t)|\,dt\,d\varphi\bigg|^p\,dx\bigg)^{1/p}\\
&\le \fint_{\TT} \bigg(\int_{G}\bigg|\int_{\RR} f(xu_{\-\varphi}a_{-t})\Psi(a_{t}u_{\varphi})\,|\sinh(t)|\,dt\,\bigg|^p\,dx\bigg)^{1/p}\,d\varphi\\
&= \bigg(\int_{G}\bigg|\int_{\RR} f(xa_{-t})\Psi(a_{t})\,|\sinh(t)|\,dt\,\bigg|^p\,dx\bigg)^{1/p}.
\end{align*}
Hence, letting $\mR$ to be the representation of $\RR$ acting on $G$ by   $\mR_{-t}f(x)=f(xa_{-t})$ we have proved that
\begin{align*}
&\left\|f*\Psi\right\|_{L^p}\le   \bigg(\int_{G}\bigg|\int_{\RR} \mR_{-t}f(x)\Psi(a_{t})\,|\sinh(t)|\,dt\,\bigg|^p\,dx\bigg)^{1/p}.
\end{align*}

At this point an application of the Coifman-Weiss transference principle \cite[Theorem 2.4]{CWtr} completes the proof of  Lemma \ref{lem:CWtrans}.
\end{proof}

Let $S(t):=|\sinh(t)|\Phi^{loc}(a_t),$ $t\in \RR.$ Lemma \ref{lem:CWtrans} reduces \eqref{eq:Tlocest} to the following estimate.
\begin{lem}
 \label{lem:part_Loc}
If $\|m_n\|_{MH(S_{\delta(p)},2)}<\infty$ then $f\mapsto f*_{\RR}S$ is a bounded operator on all $L^q(\bR),$ $1<q<\infty.$ Moreover, we have
\begin{equation}
\label{eq:Tloctransf}
\|S\|_{\Cvq{\RR}}\le C_q (1+|n|)^{6}\, \|m_n\|_{MH(S_{\delta(p)},2)}. 
\end{equation}
\end{lem}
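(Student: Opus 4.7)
The plan is to split $S = S_{\rm main}+S_{\rm err}$ according to the local expansion of Lemma \ref{lem:spher_decom_small_t}, to bound the error in $L^1(\RR)$, and to reduce the main part to Fourier multipliers on $\RR$ controlled by Mikhlin's theorem, in the spirit of Stanton-Tomas \cite{StTo_1} and Anker \cite{A1}. After the approximation step preceding Proposition \ref{pro:Cont} we may assume that $m_n$ has exponential decay, so every integral is absolutely convergent and Fubini is justified. Using evenness in $t$ of $\zns(a_t)$ and in $\la$ of $h_n(\la):=m_n(1/2+i\la)\nu_n(\la)$, and applying Lemma \ref{lem:spher_decom_small_t} on the support of $\chi$, I would write
\[
S_{\rm main}(t) = \sum_{j=0}^{2}B_j(t)\int_\RR h_n(\la)\mathcal{J}_j(\la|t|)\,d\la,\qquad B_j(t) = \frac{|\sinh t|\chi(t)}{2}\bigg(\frac{|t|}{\sinh|t|}\bigg)^{1/2}|t|^{2j}b_j^n(|t|),
\]
and define $S_{\rm err}(t)$ analogously with $E_n(|\la|,|t|)$ replacing the Bessel sum. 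Each $B_j$ is smooth, even, compactly supported in $[-1,1]$, with $\|B_0\|_{C^k}\le C_k$ and $\|B_j\|_{C^k}\le C_k(1+|n|)^4$ for $j=1,2$ by the pointwise bound on $b_j^n$.

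For the error I would split the $\la$-integral at $|\la|=1$. On $|\la|>1$, the estimate $\int_1^\infty|E_n(\la,t)|\la\,d\la\lesssim(1+|n|)^6$ from Lemma \ref{lem:spher_decom_small_t} combined with $|\nu_n(\la)|\lesssim|\la|$ provides integrability uniformly in $t\in[-1,1]$. On $|\la|\le 1$, the principal-series spherical function $\zns(a_t)$ is bounded by $1$ (positivity of type) and the explicit Bessel sum is bounded by $C(1+|n|)^4$ (each $\mathcal{J}_j$ is continuous at $0$), so $|E_n(|\la|,|t|)|\lesssim (1+|n|)^4$ there. Together with boundedness and compact support of $|\sinh t|\chi(t)$, this yields $\|S_{\rm err}\|_{L^1(\RR)}\le C(1+|n|)^6\|m_n\|_\infty$, and the trivial inclusion $L^1(\RR)\subseteq\Cvq{\RR}$ closes the error estimate.

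For each $S_{\rm main}^j$ I would use the Poisson representation $\mathcal{J}_j(r)=\pi^{-1/2}\int_0^1(1-u^2)^{j-1/2}\cos(ru)\,du$ together with Fubini to get
\[
S_{\rm main}^j(t)=\frac{B_j(t)}{\sqrt\pi}\int_0^1(1-u^2)^{j-1/2}\widetilde{h_n}(|t|u)\,du,\qquad \widetilde{h_n}(\sigma):=\int_\RR h_n(\la)\cos(\la\sigma)\,d\la,
\]
and then take the Fourier transform in $t$; substituting $\eta=\la u$ yields
\[
\mathcal{F}(S_{\rm main}^j)(\xi)=c\int_\RR h_n(\la)\bigg[\int_0^1(1-u^2)^{j-1/2}\hat B_j(\xi-\la u)\,du\bigg]d\la.
\]
The task is then to verify the Mikhlin condition $|\mathcal{F}(S_{\rm main}^j)(\xi)|+|\xi\partial_\xi\mathcal{F}(S_{\rm main}^j)(\xi)|\le C(1+|n|)^4\|m_n\|_{MH(S_{\delta(p)},2)}$. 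This should follow from the derivative bounds $|h_n^{(k)}(\la)|\le C_k\|m_n\|_{MH(S_{\delta(p)},2)}(1+|\la|)^{1-k}$ for $k=0,1,2$ (obtained via the Leibniz rule from the Mikhlin hypothesis on $m_n$ and the uniform $C^2$ bounds on $\nu_n$), combined with the Schwartz decay of $\hat B_j$. For $j=0$ this reduces to the Stanton-Tomas analysis with $h_0$ replaced by $h_n$, yielding a constant independent of $n$; for $j=1,2$ the extra $(1+|n|)^4$ enters only through $B_j$. The principal technical obstacle is the coordinated $(\la,u)$-analysis required to control $\mathcal{F}(S_{\rm main}^j)$: two integrations by parts in $\la$ (to exploit $|h_n''|\lesssim(1+|\la|)^{-1}$) produce factors $u^{-2}$ which must be balanced against the weight $(1-u^2)^{j-1/2}$ (especially singular at $u=1$ when $j=0$) and the decay of $\hat B_j$; a splitting at $u\sim(1+|\la|)^{-1}$, with direct size bounds used in the small-$u$ regime and integration by parts in the large-$u$ regime, is the natural way to handle this. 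Combined with the error estimate this proves \eqref{eq:Tloctransf}.
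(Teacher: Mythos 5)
Your peripheral steps (exponential-decay reduction, evenness, the $L^1$ treatment of the error term with the trivial bound for $|\la|\le 1$ and estimate \eqref{eq:spher_decom_small_t_error} for $|\la|\ge 1$) are fine and match the paper's in spirit. The gap is at the heart of the lemma: the Mikhlin condition for $\mF(S^j_{\rm main})$, above all for $j=0$, is exactly the hard part, and you do not prove it — you reduce to a double $(\la,u)$-integral, acknowledge the ``principal technical obstacle'', and sketch a splitting whose feasibility is unclear because it rests on a false premise. The functions $B_j$ are \emph{not} smooth: because of the factor $|\sinh t|$ (and $|t|^{1/2}\cdot|t|^{2j}\cdot|t/\sinh t|^{1/2}$), one has $B_j(t)=c_j\,|t|^{2j+1}g_j(t)$ with $g_j$ smooth and even, so $B_j$ is only $C^{2j}$ at $t=0$ and $\widehat{B_j}$ decays only polynomially — for $j=0$ like $|\eta|^{-2}$, not Schwartz. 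With that decay, your proposed ``two integrations by parts in $\la$'' does not close: the needed antiderivatives of $\widehat{B_0}$ decay only like $|\eta|^{-1}$ (the second one grows logarithmically), while $|h_n''(\la)|\lesssim(1+|\la|)^{-1}$ is itself not integrable, so the resulting bounds are log-divergent and the claimed estimate $|\mF(S^0_{\rm main})(\xi)|+|\xi\partial_\xi\mF(S^0_{\rm main})(\xi)|\lesssim\|m_n\|_{MH(S_{\delta(p)},2)}$ does not follow from the ingredients you list. Since $S^0_{\rm main}$ is a genuinely singular (non-$L^1$) kernel, this unproved estimate is the whole content of the lemma beyond the error term.

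For comparison, the paper avoids this impasse by a different organization: it first cuts off $|\la|\lesssim1$ (that piece of $S$ is trivially $L^1$); for $j=1,2$ it performs a single integration by parts in $\la$ using $\bigl(\tfrac1z\tfrac{d}{dz}\bigr)\mJ_j=c\,\mJ_{j+1}$ and the bound $|\mJ_\nu(z)|\le\min(1,z^{-1/2-\nu})$ to show those kernels are \emph{bounded} with compact support, hence in $L^1$ — no Fourier analysis is needed there; and for the singular $j=0$ term it uses the Mehler--Sonine representation $\mJ_0(z)=c\int_1^\infty(\xi^2-1)^{-1/2}\sin(z\xi)\,d\xi$ (rather than the Poisson formula) to exhibit, after Fubini and one integration by parts, $\mF(S_2^0)=\mF(\tilde\chi)*h$ with $\tilde\chi$ smooth and compactly supported and $h(\xi)=\tfrac12\int_{-1}^1(1-\la^2)^{-1/2}\la\,\bigl((1-\eta)\,\tm\,\nu_n\bigr)'(\la|\xi|)\,d\la$, for which the Mikhlin bounds are a one-line computation; the Mikhlin multiplier theorem on $\RR$ then finishes. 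Some such device (or a genuinely carried-out stationary-phase/Calder\'on--Zygmund analysis of $S^0_{\rm main}$ using the oscillatory asymptotics of $\mJ_0$) is what is missing from your argument.
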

\begin{proof}
We abbreviate $\tm(\la)=m_n(1/2+i\la).$ Note that $\tm$ is then an even function of $\la.$ Since 
\begin{equation}
\label{eq:mcomp}
\|\tm\|_{MH(\RR,2)}\le C_p \|m_n\|_{MH(S_{\delta(p)},2)}\end{equation}
the lemma will be proved if we obtain \eqref{eq:Tloctransf} with $\|\tm\|_{MH(\RR,2)}$ in place of $\|m_n\|_{MH(S_{\delta(p)},2)}.$  	
	
The proof of Lemma \ref{lem:part_Loc} is based on the local expansion of $\zeta_{n,1/2+i\lambda}$ from Lemma \ref{lem:spher_decom_small_t}. 

Let $\eta$ be an even $C_c^{\infty}$ function equal to $1$ on $[-1,1]$ and equal to $0$ on $\RR\setminus [-2,2].$ Then
\begin{equation*}S(t)=S_1(t)+S_2(t)\end{equation*} 
with
\begin{align*}
S_1(t)&:=\frac{ \chi(t)|\sinh t|}{2} \int_{\RR} \eta(\la)\tm(\la)\zeta_{n,1/2+i\lambda}(a_t)\nu_n(\la)\,d\la\\
S_2(t)&:= \frac{  \chi(t)|\sinh t|}{2}  \int_{\RR} (1-\eta)(\la)\tm(\la)\zeta_{n,1/2+i\lambda}(a_t)\nu_n(\la)\,d\la
\end{align*}

In view of $|\zeta_{n,1/2+i\lambda}(a_t)|\le 1$ we have $|S_1(t)|\leq C\, \|\tm\|_{L^\infty(\RR)}.$ Since $S_1$ is compactly supported we thus obtain
   $\|S_1\|_{L^1(\RR)}\leq C\, \|\tm\|_{MH(\RR,2)}$ and, consequently,
   \begin{equation}
   \label{eq:locS1est}
   \|S_1\|_{\Cvq{\RR}}\leq C  \|\tm\|_{MH(\RR,2)}.
   \end{equation}

It remains to consider convolution with $S_2.$ Observe that $\zeta_{n,1/2+i\la}(a_t)$ is an even function of both $t$ and $\la.$ 
Therefore applying Lemma \ref{lem:spher_decom_small_t} we may split $S_2=\sum_{j=0}^2S_2^j+S_2^E$ where
\begin{equation*}
S_2^j(t):=  |\sinh t|^{1/2}|t|^{1/2+2j}b_j^n(|t|)\chi(t)  \int_{0}^{\infty} (1-\eta)(\la)\tm(\la)\,\mJ_{j}(\la t )\nu_n(\la)\,d\la,\qquad t\in \RR,
\end{equation*}
for $ j=0,1,2,$ and
\begin{equation*}
S_2^E(t):= |\sinh t|^{1/2}|t|^{1/2}\chi(t)  \int_{0}^{\infty} (1-\eta)(\la)\tm(\la)\,E_n(\la,|t|)\,\nu_n(\la)\,d\la,\qquad t\in \RR,
\end{equation*}
Note that $(1-\eta)\tm$ also satisfies $$\|(1-\eta)\tm\|_{MH(\RR,2)}<C\|\tm\|_{MH(\RR,2)}<\infty;$$ moreover, it vanishes for $|\la|<1.$
Now, by \eqref{eq:spher_decom_small_t_error} we have
 \begin{equation*}|S_2^E(t)|\leq C\|(1-\eta)\tm\|_{L^{\infty}(\RR)}\int_{1}^{\infty}|E_n(\la,|t|)|\,\la\, d\la\leq C\,(1+|n|)^{6}\|\tm\|_{MH(\RR,2)},\qquad |t|\leq 1.\end{equation*}  Since $S_2^E$ is compactly supported, we thus obtain
 \begin{equation}
 \label{eq:locS2Eest}
 \|S_2^E\|_{\Cvq{\RR}}\leq C\,(1+|n|)^{6}\|\tm\|_{MH(\RR,2)}.
 \end{equation}
Thus we are left with considering $S_2^j,$ $j=0,1,2.$

We start with $j=1,2.$ We apply the formula
$$\bigg(\frac1z\frac{d}{dz}\bigg)^{j} \mJ_{0}(z)=c_j\,\mJ_{j}(z),\qquad z\ge 0,$$
see \cite[eq.\ 10.6.6, p.\ 222]{NIST}. Integrating by parts once in $\la$ we get
\begin{equation}
\label{eq:locS2jform}
S_2^j(t)=  |\sinh t|^{1/2} |t|^{1/2} t^{2(j-1)}b_j^n(t)\chi(t)  \int_{0}^{\infty}\bigg(\frac{d}{d\la}\circ \frac1{\la}\bigg)\bigg( (1-\eta)\cdot\tm\cdot\nu_n\bigg)(\la)\,\mJ_{j-1}(\la t )\,d\la,
\end{equation}
where $t\in \RR.$
Note that $$\bigg|\bigg(\frac{d}{d\la}\circ \frac1{\la}\bigg)\bigg( (1-\eta)\cdot\tm\cdot\nu_n\bigg)(\la)\bigg|\le C (1+|\la|)^{-1}\,\|\tm\|_{MH(\RR,2)};$$
moreover we have $|\mJ_{t}(z)|\le \min (1,z^{-1/2-t}),$ for $z,t\ge 0.$ 
Therefore splitting the integral \eqref{eq:locS2jform} according to $|\la t|<1$ or $|\la t|\ge 1$ we obtain
\begin{align*}
&|S_2^j(t)|\\
&\le  \|\tm\|_{MH(\RR,2)}\,|t|^{2j-1} \int_{|\la|<1/t}(1+|\la|)^{-1}\,d\la+ \|\tm\|_{MH(\RR,2)}\,|t|^{2j-1}\int_{|\la|>1/t}|\la t|^{-1/2-j}\,d\la\\
&
\le  \bigg( |t|\log |t|+|t|^{2(j-1)}\bigg)\, \|\tm\|_{MH(\RR,2)}\le C\,\|\tm\|_{MH(\RR,2)},\qquad |t|\le 1,
\end{align*}
and, consequently,
\begin{equation}
\label{eq:locS2jest}
\|S_2^j\|_{\Cvq{\RR}}\le  C_q \|\tm\|_{MH(\RR,2)}.
\end{equation}
for $j=1,2.$

It remains to treat 
$$S_2^0(t)=  |\sinh t|^{1/2}|t|^{1/2}\chi(t) \int_{0}^{\infty} (1-\eta)(\la)\tm(\la)\,\mJ_{0}(\la t )\nu_n(\la)\,d\la,\qquad t\in \RR.$$
The formula \cite[eq. 10.9.12, p.\ 224]{NIST} implies
$$\mJ_0(\la t )=c\, \int_{1}^{\infty}(\xi^2-1)^{-1/2}\, \sin{\xi |t|\la}\,d\xi=c\,\int_{\la}^{\infty}(\xi^2-\la^2)^{-1/2}\, \sin{\xi |t|}\,d\xi,\qquad \la \ge 0.$$
Therefore, setting
$$g(\xi)=\int_{0}^{\xi}(\xi^2-\la^2)^{-1/2}\,(1-\eta)(\la)\tm(\la)\nu_n(\la)\, d\la,\qquad \xi\ge 0$$ 
and
$$h(\xi)=\bigg(\frac{d}{d\xi}g\bigg)(|\xi|),\qquad \xi\in \RR,$$
and using Fubini's theorem followed by integration by parts we obtain
\begin{align*}S_2^0(t)&= c|\sinh t|^{1/2}|t|^{1/2}\chi(t)  \int_{0}^{\infty} g(\xi)\,\sin \xi|t|\,d\xi=\frac{ -c|\sinh t|^{1/2}\chi(t)}{|t|^{1/2}}  \int_{0}^{\infty} \frac{d}{d\xi}g(\xi)\,\cos \xi|t|\,d\xi\\
&=\frac{ -c\,|\sinh t|^{1/2}\chi(t)}{2|t|^{1/2}}  \int_{\RR} h(\xi)
\,e^{i\xi t}\,d\xi,\qquad t\in \RR.\end{align*}
Consequently, denoting $\tilde{\chi}(t)=\frac{ -c\,|\sinh t|^{1/2}\chi(t)}{2|t|^{1/2}},$ $t\in \RR,$ we have
$\mF (S_2^0)(\xi)=(\mF(\tilde{\chi})*_{\RR}h)(\xi).$

We claim that
\begin{equation}
\label{eq:S20comp}
\sup_{\xi\in\RR}\bigg(|h(\xi)|+\bigg|\xi\frac{d}{d\xi}h(\xi)\bigg|\bigg)\leq C \|\tm\|_{MH(\RR,2)}.\end{equation}
To obtain \eqref{eq:S20comp} we change variables getting $$g(\xi)=\frac12 \int_{-1}^{1}(1-\la^2)^{-1/2}((1-\eta)\tm\nu_n)(\la \xi)\,d\la,\qquad \xi>0,$$
so that
$$h(\xi)=\frac12 \int_{-1}^{1}(1-\la^2)^{-1/2}\la \big((1-\eta)\tm\nu_n\big)'(\la |\xi|)\,d\la.$$
Now a computation produces \eqref{eq:S20comp}.

Since $\mF(\tilde{\chi})$ is a Schwarz function \eqref{eq:S20comp} remains true with $\mF(S_2^0)$ replacing $h.$ Thus, applying the Mikhlin multiplier theorem on $\RR$ we arrive at
\begin{equation}
\label{eq:locS20est}
\|S_2^0\|_{\Cvq{\RR}}\leq C_q\, \|\tm\|_{MH(\RR,2)}.
\end{equation}

In summary, combining \eqref{eq:locS1est}, \eqref{eq:locS2Eest}, \eqref{eq:locS2jest}, and \eqref{eq:locS20est}, and then, using \eqref{eq:mcomp} we obtain \eqref{eq:Tloctransf}. The proof of the lemma is thus finished.

\end{proof}
\subsection{The continuous global part}
\quad

This section is devoted to the proof of the estimate
\begin{equation}
\label{eq:Tgloest}\| f*|\Phi^{glo}|\|_{L^p}\leq C_{p,n}\big(\|m_n\|_{MH(S_{\delta(p)},2)}\big)\|f\|_{L^p}.
\end{equation}
We remark that contrary to Proposition \ref{pro:disc} and the estimate \eqref{eq:Tlocest} here we are not able to keep the explicit polynomial dependence on $n.$ This is due to a lack of such an estimate in \eqref{eq:globznestla} from Lemma \ref{lem:spher_decom_large_t}.

Duality arguments show that it is enough to take $1<p<2.$ Indeed, if $h\in L^{p'}$ then since $|\Phi^{glo}(x^{-1})|=|\Phi^{glo}(x)|,$ $x\in G,$ we have
	$
	\langle f*|\Phi^{glo}|,h\rangle_{L^2}=\langle f,h*|\Phi^{glo}|\rangle_{L^2}.
	$
Clearly it also holds $\|m_n\|_{MH(S_{\delta(p)},2)}=\|m_n\|_{MH(S_{\delta(p')},2)}.$ Thus in the reminder of this section we consider $1<p<2.$

In what follows we set $$\Phi_p(u_{\varphi}a_t u_{\theta})=e^{t/p}\Phi^{glo}(u_{\varphi}a_t u_{\theta}),\qquad t>0.$$
From the global expansion of $\zeta_{n,1/2+i\la}$ proved in Lemma \ref{lem:spher_decom_large_t} we deduce estimates that are crucial for the proof of  \eqref{eq:Tgloest}.
\begin{lem}
\label{lem:keyglob}
If $\|m_n\|_{MH(S_{\delta(p)},2)}<\infty,$ then we have
\begin{equation*}
|\Phi_p(a_t)|\leq C_{p,n}\,(1+t)^{-2} \|m_n\|_{MH(S_{\delta(p)},2)},\qquad t>0.\end{equation*}
\end{lem}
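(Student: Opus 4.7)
The plan is to combine a Cauchy contour shift with an oscillatory integration by parts, in the spirit of the Anker and Stanton--Tomas treatment of the global part of the spherical kernel on a symmetric space. Since $\chi\equiv 1$ on $[-1/2,1/2]$, one has $\Phi^{glo}(a_t)=0$ for $|t|<1/2$, so the bound is trivial there and we may restrict to $t\ge 1/2$. Starting from \eqref{eq:Phiform1},
$$\Phi^{glo}(a_t)=\tfrac12(1-\chi(t))\int_{\RR}m_n(1/2+i\la)\,\zeta_{n,1/2+i\la}(a_t)\,\nu_n(\la)\,d\la,$$
my first step is to shift the contour from $\Ima\la=0$ down to $\Ima\la=-\delta(p)$. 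This is legitimate: $m_n$ is holomorphic on $\Int S_{\delta(p)}$, the map $s\mapsto\zeta_{n,s}(a_t)$ is entire by \eqref{eq:hyper_form}, and since the poles of $\tanh\pi\la$ and $\coth\pi\la$ lie at $\la\in i(\ZZ+\tfrac12)$ and $\la\in i\ZZ$ respectively, the functions $\nu^{\pm}$ are pole-free in the strip $|\Ima\la|<1/2$. The exponential decay of $m_n$ at infinity, secured by the approximation reduction carried out before the statement, supplies the horizontal decay needed for Cauchy's theorem.

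After the substitution $\la=u-i\delta(p)$, write $\zeta_{n,1/2+i\la}(a_t)=e^{-t(1/2+i\la)}\tilde\zeta_n(\la,t)$ with $\tilde\zeta_n(\la,t):=e^{t(1/2+i\la)}\zeta_{n,1/2+i\la}(a_t)$. A direct computation gives $e^{-t(1/2+i\la)}|_{\la=u-i\delta(p)}=e^{-t/p}e^{-itu}$, using $\tfrac12+\delta(p)=\tfrac1p$. The factor $e^{-t/p}$ then cancels the $e^{t/p}$ defining $\Phi_p$, leaving the purely oscillatory representation
$$e^{t/p}\Phi^{glo}(a_t)=\tfrac12(1-\chi(t))\int_{\RR}g(u,t)\,e^{-itu}\,du,$$
with $g(u,t):=m_n(1/p+iu)\,\tilde\zeta_n(u-i\delta(p),t)\,\nu_n(u-i\delta(p))$.

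The second step is a double integration by parts in $u$ against $e^{-itu}$, producing the prefactor $t^{-2}$. The symbol bounds I would use, each for $j=0,1,2$, are: $|\partial_u^j m_n(1/p+iu)|\lesssim (1+|u|)^{-j}\|m_n\|_{MH(S_{\delta(p)},2)}$ from the Mikhlin--H\"ormander hypothesis; $|\partial_u^j\tilde\zeta_n(u-i\delta(p),t)|\lesssim C_{p,n}(1+|u|)^{-1/2-j}$ from \eqref{eq:globznestla}, applied with $\varepsilon=(1/2-\delta(p))/2>0$; and $|\partial_u^j\nu_n(u-i\delta(p))|\lesssim (1+|u|)^{1-j}$, since $\nu^{\pm}$ grow linearly on horizontal lines inside the strip while their higher derivatives are bounded (in fact exponentially decaying as $|u|\to\infty$). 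The Leibniz rule then yields $|\partial_u^2 g(u,t)|\lesssim C_{p,n}\|m_n\|_{MH(S_{\delta(p)},2)}(1+|u|)^{-3/2}$, which is integrable over $\RR$. Combined with the $t^{-2}$ gained from the two integrations by parts, this gives $|\Phi_p(a_t)|\lesssim C_{p,n}\,t^{-2}\,\|m_n\|_{MH(S_{\delta(p)},2)}$ for $t\ge 1/2$, equivalent to the claimed $(1+t)^{-2}$ bound on that range.

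The hard part here is not any new analysis: Lemma \ref{lem:spher_decom_large_t} has already done the substantive work of expanding $\zeta_{n,1/2+i\la}$ and controlling its $\la$-derivatives on horizontal lines inside the strip. The remaining obstacle is bookkeeping -- justifying the contour shift rigorously, checking that $\nu_n$ is pole-free with the stated polynomial growth on the shifted contour, and verifying that the Leibniz sum is integrable -- and accepting that the constant $C_{p,n}$ inherits the poor $n$-dependence (of order $\Gamma(Cn^2)$) that is explicitly flagged in the discussion around \eqref{eq:globznestla}.
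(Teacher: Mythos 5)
Your proposal is correct and follows essentially the same route as the paper's proof: shift the contour from $\Ima\la=0$ to $\Ima\la=-\delta(p)$, absorb the resulting factor $e^{-t(1/2+\delta(p))}$ into the definition of $\Phi_p$, and integrate by parts twice in $\la$ using the Mikhlin--H\"ormander bounds on $m_n$, the growth of $\nu_n$ on the shifted line, and the derivative estimate \eqref{eq:globznestla} to get an integrable symbol of size $(1+|\la|)^{-3/2}$. The only remark is that your identity $\tfrac12+\delta(p)=\tfrac1p$ tacitly uses the standing reduction to $1<p<2$ made just before the lemma, exactly as the paper does.
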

\begin{proof}
	
	 By \eqref{eq:Phiform1} we have
	$$\Phi^{glo}(a_t)=\frac12\,(1-\chi(t))\,\,\int_{\RR}\mu(\la)\,\zeta_{n,1/2+i\la}(a_t)\, d\la,\qquad t>0,$$
	where we have set $$\mu(\la)=m_n(1/2+i\la)\nu_n(\la),$$
	with $\nu_n$ defined in \eqref{eq:nun}. Note that by our assumptions on $m_n$ we have the estimate
		\begin{equation}
	\label{eq:tmprop0}
	\bigg| \frac{d^j}{d\la^j} \mu(\la)\bigg|\leq C\,(1+|\la|)^{-j+1}\,\|m_n\|_{MH(S_{\delta(p)},2)},\qquad |\Ima \la|\le \delta(p),
	\end{equation}
	for $j=0,1,2.$
	
		Changing the path of integration to $\{\la-i\delta(p)\colon \la \in \RR\}$ we obtain
		\begin{equation*}\Phi^{glo}(a_t)=\frac12\,(1-\chi(t))\,\int_{\RR}\mu(\la-i\delta(p))\,\zeta_{n,1/p+i\la}(a_t)\, d\la,\end{equation*}
	for $t>0,$ so that
		\begin{equation*}
		\Phi_p(a_t)=\frac12\,(1-\chi(t))\,\int_{\RR}\big[\mu(\la-i\delta(p))e^{t(1/p+i\la) }\zeta_{n,1/p+i\la}(a_t)\big]\,e^{-it\la} d\la.
		\end{equation*} 
	Now, \eqref{eq:tmprop0} together with \eqref{eq:globznestla} from Lemma \ref{lem:spher_decom_large_t} show that for $j=0,1,2,$ it holds
		\begin{equation}
		\label{eq:tmprop2}
		\bigg| \partial_{\la}^j\big[\mu(\la-i\delta(p))e^{t(1/p+i\la) }\zeta_{n,1/p+i\la}(a_t)\big]\bigg|\leq C_{p,n}\,(1+|\la|)^{-j+1/2}\,\|m_n\|_{MH(S_{\delta(p)},2)},
		\end{equation}
	where $\la \in \RR.$ Integrating by parts in $\la$ twice we obtain
	\begin{equation*}
	\Phi_p(a_t)=\frac{\chi(t)-1}{2t^2}\,\int_{\RR}\partial_{\la}^2\big[\mu(\la-i\delta(p))e^{t(1/p+i\la) }\zeta_{n,1/p+i\la}(a_t)\big]e^{-it \la}\, d\la.\end{equation*}
Therefore, applying \eqref{eq:tmprop2} we complete the proof of the lemma.
 \end{proof}

We are now ready to prove \eqref{eq:Tgloest}. Observe that  $|\Phi^{glo}|\in A_0.$ Hence, by the Herz majorizing principle (see \cite{Her1}) it is enough to show that
\begin{equation}
\label{eq:Herz1}
\int_0^{\infty}|\Phi^{glo}(a_t)||\sinh t|e^{-t/p'}\,dt \leq C_{p,n}\big(\|m_n\|_{MH(S_{\delta(p)},2)}\big).
\end{equation}
By definition $\Phi^{glo}(a_t)=e^{-t/p}\Phi_p(a_t),$ thus, Lemma \ref{lem:keyglob} gives
\begin{align*}
\int_0^{\infty}|\Phi^{glo}(a_t)||\sinh t|e^{-t/p'}\,dt\le \int_1^{\infty}|\Phi_p(a_t)|\,dt\, \le C_{p,n} \, \|m_n\|_{MH(S_{\delta(p)},2)}.
\end{align*}
Therefore, \eqref{eq:Herz1} is justified, and the proof of \eqref{eq:Tgloest} is finished.

\subsection{The full continuous part}
\quad

Summarizing the previous two sections, \eqref{eq:Tlocest} and \eqref{eq:Tgloest} imply
$$\|T^{cont} f\|_{V_n^p}\le \|T^{loc} f\|_{V_n^p} + \|T^{glo} f\|_{V_n^p}\le  C_{p,n}\, \|m_n\|_{MH(S_{\delta(p)},2)}\,\|f\|_{V_n^p}.$$
The proof of Proposition \ref{pro:Cont} is thus completed, hence, also the proof of Theorem \ref{thm:multVnp}.

\section{Joint spectral multipliers of $(L,-iX)$}
\label{sec:jsmLp}

We study the joint spectral multipliers of $(L,-iX)$ on $L^p,$ $1<p<\infty.$ This is done first for $p=2$ when we have a full characterization. For other values of $p$ we are able to determine the joint $L^p$ spectrum of $(L,-iX)$ and give a necessary condition for the $L^p$ boundedness of $m(L,-iX)$.


We start with the $L^2$ theory and denote by $E_{1}$ and $E_{2}$ the spectral measures of $L$ and $-iX,$ respectively. The next statement is a direct consequence of  Section \ref{subsec-Ktypes} and Proposition \ref{spectrumL_n}.

\begin{lem} 
	\label{lem:jointL2spec}
The closures of $-iX$ and $L$  strongly commute (i.e., $E_{1}$ and $E_{2}$ commute) and their joint $L^2$-spectrum is the set $\Delta(L,-iX)\subset\Delta_+$ given by
\begin{equation*}
\Delta(L,-iX)=\bigcup_{n\in\frac12\ZZ}\big(\Delta_n^2\times \{n\}\big)
\end{equation*}
with
\begin{equation*}
\Delta_n^2=\big\{z+n^2:z\in [1/4,+\infty)\cup \gamma[D_n]\big\}\ .
\end{equation*}

\end{lem}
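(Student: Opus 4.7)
The plan is to reduce both assertions to material already established in Sections \ref{sec:Kcent} and the beginning of Section 4. The key observation is that $-iX$ has pure point spectrum equal to $\frac12\ZZ$ with eigenspaces $V_n^2$, and the $K$-type decomposition
$$
L^2(G)=\sum_{n\in\frac12\ZZ}^{\oplus}V_n^2
$$
from Section \ref{subsec-Ktypes} simultaneously diagonalizes $-iX$ and block-diagonalizes $L$. Once this is set up cleanly, strong commutativity and the description of the joint spectrum follow from general spectral-theoretic principles together with Proposition \ref{spectrumL_n}.

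First I would verify strong commutativity. The spectral projection $E_2(\{n\})$ of $-iX$ onto its $n$-eigenspace is exactly the orthogonal projection $\mP_n$ onto $V_n^2$ introduced in Section \ref{subsec-Ktypes}. Since $X$ and $L$ commute as left-invariant differential operators, and $V_n^2$ is defined as the $K$-isotypic subspace for the character $e^{in\theta}$, the operator $L$ preserves $V_n^2$; equivalently, $\mP_n$ commutes with $L$ on a dense invariant core. By a standard argument (for instance the classical criterion that a projection commuting with a self-adjoint operator commutes with all its spectral projections), each $\mP_n=E_2(\{n\})$ commutes with every $E_1(B)$. Since $E_2$ is supported on $\frac12\ZZ$ and generated by the $\mP_n$, this gives commutation of the two spectral measures, which is precisely strong commutativity.

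Next I would identify the joint spectrum. Because $E_2$ is purely atomic with atoms $\{n\}$, $n\in\frac12\ZZ$, the joint spectral measure is carried by $\bigcup_n \sigma(L_n)\times\{n\}$, and, by the functional calculus applied to indicator functions of product Borel sets, a point $(\mu,n)$ lies in the joint spectrum if and only if $n\in\frac12\ZZ$ and $\mu\in\sigma_{V_n^2}(L_n)$, where $L_n$ is the restriction of $L$ to $V_n^2$. Proposition \ref{spectrumL_n} identifies the latter set as
$$
\Delta'_n=\big[\tfrac14+n^2,\infty\big)\cup\{\gamma(s)+n^2:s\in D_n\},
$$
which coincides with $\Delta_n^2$ in the statement. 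Taking the union over $n\in\frac12\ZZ$ yields the claimed formula for $\Delta(L,-iX)$.

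I do not anticipate any serious obstacle: the two ingredients (the $L^2$ decomposition into $K$-types together with the fact that $L$ preserves each $V_n^2$, and Proposition \ref{spectrumL_n}) do all the work, and the only thing to be careful about is stating cleanly that, for commuting spectral measures one of which is purely atomic, the joint spectrum is precisely $\bigsqcup_n\sigma(L|_{\Ran E_2(\{n\})})\times\{n\}$. A brief justification of this fact (e.g.\ by observing that $E_1\otimes E_2(U\times\{n\})=E_1(U)\mP_n$ and that this is nonzero for every open neighborhood of $(\mu,n)$ exactly when $\mu\in\sigma(L_n)$) suffices.
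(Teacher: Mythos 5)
Your proposal is correct and follows exactly the route the paper intends: the paper gives no detailed argument, stating only that the lemma is a direct consequence of the $K$-type decomposition of Section \ref{subsec-Ktypes} and Proposition \ref{spectrumL_n}, which are precisely the two ingredients you use. Your filling-in of the routine spectral-theoretic steps (that $\mP_n=E_2(\{n\})$ commutes with $\bar{L}$ via the core $\mD(G)$, and that for an atomic $E_2$ the joint spectrum is $\bigcup_n\sigma(L_n)\times\{n\}$) is sound and matches the intended proof.
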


%
%
%
%

The following statements about functional calculus can be justified in a similar way. We denote by $E$ the joint spectral measure of the pair $(L,-iX)$ which is uniquely determined by $E(\omega_1\times \omega_2)=E_1(\omega_1)E_2(\omega_2).$

\begin{pro}\label{pro-multiplier}
\quad
Let $m$ be a bounded Borel function on $\Delta(L,-iX)$.
\begin{enumerate}
\item[\rm(i)] For $f\in \mD$,
\begin{equation*}
m(L,-iX)f=\sum_{n\in\frac12\ZZ}m(L_n,n)\mP_nf\ .
\end{equation*}
\item[\rm(ii)] The functional
\begin{equation*}
f\in \mD\ \longmapsto\ m(L,-iX)f(e)
\end{equation*}
defines a $K$-central distribution $\Phi$ on $G$ such that 
$$
m(L,-iX)f=f*\Phi\ .
$$
\item[\rm(iii)] Conversely, for any $K$-central distribution $\Phi$ such that $\|f*\Phi\|_2\le C\|f\|_2$ for every $f\in \mD$, there is a bounded Borel function $m$ on $\Delta(L,-iX)$, unique up to sets of $E$-measure zero, such that 
$$
f*\Phi=m(L,-iX)f\ .
$$
\end{enumerate}
\end{pro}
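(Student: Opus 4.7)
The three parts will be proved in order, with each one reducing to machinery already assembled in the paper.

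For part (i), the strong commutativity of $L$ and $-iX$ established in Lemma \ref{lem:jointL2spec}, combined with the pure point spectrum of $-iX$ on $\frac12\ZZ$ and the corresponding decomposition $L^2=\bigoplus_n V_n^2$ with projections $\mP_n$, allows the joint functional calculus to be factored. Since $L$ preserves each $V_n^2$ with restriction $L_n$, the joint spectral theorem gives
\begin{equation*}
m(L,-iX)f=\sum_{n\in\frac12\ZZ}m(L_n,n)\mP_nf
\end{equation*}
with convergence in $L^2$, and restriction to $f\in\mD$ is automatic.

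For part (ii), the key observation will be that $m(L,-iX)$ is a bounded $L^2$-operator commuting with all left translations (since $L$ and $-iX$ are left-invariant) and also commuting with conjugation by elements of $K$ (since $L$ and $-iX$ are $\Ad(K)$-invariant). Bounded Borel functional calculus extends the adjoint $m(L,-iX)^*$ to act on distributions, so I would define
\begin{equation*}
\Phi(f):=\langle f,\,m(L,-iX)^*\delta_e\rangle,\qquad f\in\mD(G),
\end{equation*}
which is a distribution on $G$ and formally equals $m(L,-iX)f(e)$. Left-translation invariance of $m(L,-iX)$ then yields $m(L,-iX)f=f*\Phi$ by the standard Schwartz-kernel-type computation, and the $K$-centrality of $\Phi$ is inherited from the $\Ad(K)$-invariance of $m(L,-iX)$ together with the $K$-invariance of $\delta_e$.

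For part (iii), given a $K$-central distribution $\Phi$ with $T_\Phi f:=f*\Phi$ bounded on $L^2$, the plan is to realize $T_\Phi$ as a joint functional calculus operator by decomposing into $K$-types. The operator $T_\Phi$ commutes with $L$ and $-iX$ (right convolution always commutes with left-invariant differential operators) and, because $\Phi$ is $K$-central, $T_\Phi$ also commutes with right translations by $K$; in particular it preserves every $V_n^2$. Its restriction to $V_n^2$ then commutes with $L_n$, and Proposition \ref{spectrumL_n} furnishes a bounded Borel function $m_n$ on $\Delta'_n$, unique modulo $\nu_n$-null sets, with $T_\Phi|_{V_n^2}=m_n(L_n)$. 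Defining $m(z,n):=m_n(z)$ on $\Delta(L,-iX)=\bigcup_n\Delta'_n\times\{n\}$, part (i) yields $T_\Phi=m(L,-iX)$. Uniqueness modulo $E$-null sets follows from the product decomposition $E(\sigma\times\{n\})=E^{L_n}(\sigma)\mP_n$, where $E^{L_n}$ is the spectral measure of $L_n$, so $E$-null sets are precisely those whose fiber over each $n$ is $\nu_n$-null.

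The main obstacle is the rigorous construction of $\Phi$ in part (ii): for a general bounded Borel $m$, the function $m(L,-iX)f$ lies only in $L^2$, so the literal pointwise evaluation $(m(L,-iX)f)(e)$ has no intrinsic meaning. The duality definition above circumvents this, but one must then verify that this $\Phi$ genuinely satisfies $m(L,-iX)f=f*\Phi$, which amounts to a Fubini-type check reducing to the identity $(f*\Phi)(x)=\Phi(L_{x^{-1}}$-translate of $\check f)$ together with the commutation of $m(L,-iX)$ with left translations. Once this hurdle is cleared, the $K$-centrality assertion and the converse in (iii) proceed along the formal lines indicated.
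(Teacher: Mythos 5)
Your parts (i) and (iii) are essentially what the paper intends: the proposition is stated there without a detailed proof, as a routine consequence of the $K$-type decomposition of Section \ref{subsec-Ktypes} and of Proposition \ref{spectrumL_n}, and your reduction of (iii) to the converse half of Proposition \ref{spectrumL_n} on each $V_n^2$, followed by gluing the fiberwise multipliers and identifying $E$-null sets with fiberwise $\nu_n$-null sets, is the intended route. One small point in (iii): the converse in Proposition \ref{spectrumL_n} is stated for kernels in $A_n$, so you should first note that, by $K$-centrality of $\Phi$, the restriction $T_\Phi|_{V_n^2}$ coincides with convolution by $\mP_n\Phi\in A_n$; after that the argument is fine.

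The genuine gap is in (ii). Defining $\Phi(f)=\langle f,\,m(L,-iX)^*\delta_e\rangle$ does not circumvent the difficulty you yourself flag: the bounded Borel calculus yields an operator on $L^2$ only, and there is no a priori extension of $m(L,-iX)^*$ to distributions --- such an extension exists precisely when the operator is already known to be convolution with a distributional kernel, which is what is to be proved. Unwinding the duality, $\langle f,m(L,-iX)^*\delta_e\rangle$ is literally $m(L,-iX)f(e)$, i.e.\ the quantity whose meaning is in question, so the construction as written is circular. What is missing is a proof that for $f\in\mD$ the function $m(L,-iX)f$ is (a.e.\ equal to) a continuous function and that $f\mapsto m(L,-iX)f(e)$ is continuous on $\mD$. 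Two standard repairs: (a) follow the paper's implicit route and set $\Phi=\sum_n\Phi_n$ with $\Phi_n\in A_n$ the kernels of $m(L_n,n)$ from \eqref{Phi<->m}; writing $m(L_n,n)=\bigl[m(L_n,n)(I+L_n)^{-k}\bigr](I+L_n)^k$ with $k$ large, the modified kernels are bounded continuous functions with sup-norms $O\bigl((1+|n|)^{2-k}\bigr)$ (using $|\zeta_{n,s}|\le1$ on the spectrum, $\nu_n(\la)\sim\la$, and $\gamma(s)+n^2\ge|n|$ for $s\in D_n$), while $\|\mP_n(I+L)^kf\|_{L^1}$ decays rapidly in $n$ for $f\in\mD$ because $\mP_nX^{2M}f=(in)^{2M}\mP_nf$; this gives uniform convergence of $\sum_n m(L_n,n)\mP_nf$ to a continuous function, hence a well-defined $\Phi\in\mD'(G)$ which is $K$-central since each $\Phi_n\in A_n$ is; or (b) use that every $h\in\mD$ is a finite sum of convolutions $f*g$ with $f,g\in\mD$ (Dixmier--Malliavin), so that $m(L,-iX)h=\sum_jf_j*\bigl(m(L,-iX)g_j\bigr)$ is continuous and the functional $h\mapsto m(L,-iX)h(e)$ is well defined and continuous. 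With either repair, your derivation of $m(L,-iX)f=f*\Phi$ from left-invariance and of $K$-centrality from ${\rm Ad}(K)$-invariance goes through.
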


%

We shall now focus on determining the joint spectrum of the pair $(L,-iX)$ on the full space $L^p$ for $p\in (1,\infty)\setminus\{2\}.$ The key tool we use here is Theorem \ref{thm:multVnp}.

As both $iX$ and $L$ are unbouded we need first to state what do we mean by their domains. We consider $iX$ to be an operator defined on the Sobolev space in the $\theta$ variable in $x=u_{\varphi}a_tu_{\theta}.$ More precisely, let $W^{1,p}_{X}$ be the space of those functions $f$ on $G$ such that for a.e.\ $x\in G$ the function $h(\theta)=f(xu_{\theta})$ belongs to the classical Sobolev space $W^{1,p}([0,4\pi)).$ For $f\in W^{1,p}_{X}$ the quantity $iX f$ is a well defined function in $L^p.$ The space $W^{1,p}_{X}$ comes equipped with the norm
$$\|f\|_{W^{1,p}_{X}}:=\|Xf\|_p+\|f\|_p.$$ The operator $iX$ is considered on the domain  ${W^{1,p}_{X}}.$ We remark that the space  ${W^{1,p}_{X}}$ is also the domain on $L^p$ of the translation group $e^{-tX}$ acting on the $\theta$ component of $f(u_{\varphi}a_tu_{\theta})$.

Since the semigroup $\{e^{-tL}\}_{t>0}$ is strongly continuous on $L^p$ (in fact a contractive one) it is natural to consider $L$ on its domain as a generator of $\{e^{-tL}\}_{t>0}$ on $L^p.$  

Till the end of this section we fix $1<p<\infty.$ It is well known that $$\sigma_{L^p}(iX)= \ZZ/2\qquad \textrm{and}\qquad   \sigma_{L^p}(L)=\gamma\left[S_{\delta(p)}\right]=\Par(\delta(p))$$
where $\delta(p)=|1/p-1/2|$ while $$ \Par(t):=\left\{z\in \CC\colon \Real z\ge \frac{(\Ima z)^2}{4t^2}+\frac14-t^2 \right\},\qquad\textrm{if } t\neq 0 $$
and
$$\Par(0):=[1/4,\infty).$$

There are several reasonable notions of a joint spectrum for a pair of unbounded operators. These notions may but need not to coincide. We recall two of them here. 

The joint approximate spectrum $\sigma_a(L,-iX)$ is the set of all $(\xi,\la) \in \CC^2$ such that there is a sequence of vectors $f_j\in \Dom(iX)\cap \Dom(L)$ with
$$\lim_{j\to \infty} \|iX f_j-\xi f_j\|_{L^p}+\|L f_j-\la f_j\|_{L^p}=0.$$ 
The joint residual spectrum $\sigma_R(L,-iX)$ is the set of all $(\xi,\la)\in \CC$ such that $$\Ran(\xi-iX)+\Ran(\la-L) \textrm{ is not dense in } L^p.$$
The joint spectrum $\sigma_J(L,-iX)$ is $\sigma_a(L,-iX) \cup \sigma_R(L,-iX).$

The commutant spectrum $\sigma'(L,-iX)$ is the set of all pairs $(\xi,\la)$ such that the equation
\begin{equation}
\label{eq:comutant}
(\xi I-iX)B_1+(\la I - L)B_2=I\end{equation}
has no solution among operators $B_1,B_2$ belonging to the commutant $\mathcal{R}'$ of the family of operators $$\mathcal{R}:=\{(\xi-iX)^{-1}\colon \xi \in \rho(iX)\}\cup\{(\la-L)^{-1}\colon \la \in \rho(L)\} $$

To state the main result of this section we define
\begin{equation*}
\Delta^p(L,-iX):= \bigcup_{n\in \ZZ/2} \Delta^p_n\times  \{n\}\end{equation*} where
$$\Delta^p_n:=\big\{z+n^2:z\in \Par(\delta(p))\cup \gamma[D_n]\big\}.$$

\begin{thm}
	\label{thm:jointspectrum}
	For each $1<p<\infty$ we have
	\begin{equation*}
	\sigma'(L,-iX)
	=\sigma_J(L,-iX)=\Delta^p(L,-iX).
	\end{equation*}
\end{thm}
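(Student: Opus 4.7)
My plan is to establish the chain
$$\Delta^p(L,-iX)\;\subseteq\;\sigma_J(L,-iX)\;\subseteq\;\sigma'(L,-iX)\;\subseteq\;\Delta^p(L,-iX),$$
so all three sets coincide. The middle inclusion is a soft fact about commuting pairs: operators in $\mathcal{R}'$ commute with $iX$ and $L$ on their domains, so if $B_1,B_2\in\mathcal{R}'$ were to solve \eqref{eq:comutant} at an approximate joint eigenvalue $(\xi,\la)$ with witnesses $f_k$, $\|f_k\|_{L^p}=1$, then applying both sides to $f_k$ would give $1\le \|B_1\|\,\|(\xi I-iX)f_k\|_{L^p}+\|B_2\|\,\|(\la I-L)f_k\|_{L^p}\to 0$, a contradiction; the same identity forces $\Ran(\xi I-iX)+\Ran(\la I-L)=L^p$, ruling out a residual $(\xi,\la)$.

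For $\Delta^p\subseteq\sigma_J$, fix $(\la,n)\in\Delta^p$. If $\la-n^2=\gamma(s)$ with $s\in D_n$, Lemma \ref{lem:spher_demi-entier} places $\zns\in L^p$ and $\zns$ is a nonzero joint eigenvector, so $(\la,n)$ lies in the point spectrum. Otherwise $\la\in n^2+\Par(\delta(p))$; I argue by contradiction. If $\la\notin\sigma_{V_n^p}(L_n)$, then $(\la I-L_n)^{-1}$ is bounded on $V_n^p$ and realises the multiplier $m(z)=(\la-z)^{-1}$; Proposition \ref{pro:CleSte} then forces $m_n(s)=(\la-n^2-\gamma(s))^{-1}$ to be bounded holomorphic on $\Int S_{\delta(p)}$, contradicting the fact that $\la-n^2\in\gamma[S_{\delta(p)}]$ places a pole of $m_n$ in $S_{\delta(p)}$. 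Hence $\la\in\sigma_{V_n^p}(L_n)$. Since $-iX$ acts on $V_n^p$ as the scalar $n$, an approximate eigensequence for $L_n-\la$ in $V_n^p$ is automatically an approximate joint eigensequence for $(L,-iX)$; a residual obstruction on $V_n^p$ gives, via precomposition with the contraction $\mathcal{P}_n$ (which commutes with $L$), a nontrivial functional on $L^p$ annihilating $\Ran(\la I-L)+\Ran(nI-(-iX))$. Either way $(\la,n)\in\sigma_J(L,-iX)$.

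For $\sigma'\subseteq\Delta^p$, I take $(\la,n)\notin\Delta^p$ and build $B_1,B_2\in\mathcal{R}'$ solving \eqref{eq:comutant}. If $n\notin \frac12\ZZ$, the resolvent of $-iX$ at $n$ is bounded on $L^p$ and handles the equation alone. If $n\in\frac12\ZZ$, then by assumption $\la\notin\Delta_n^p$; set $B_1=T_g$ with $g(z)=(\la-z)^{-1}$. Because $\la\notin n^2+\Par(\delta(p))$, the pullback $g_n(s)=(\la-n^2-\gamma(s))^{-1}$ is holomorphic on $S_{\delta(p)}$, and a routine estimate using $\gamma(s)\sim -s^2$ at infinity shows $\|g_n\|_{MH(S_{\delta(p)},2)}<\infty$; the finite sum $\sum_{s\in D_n}s|g_n(s)|$ is controlled because $\la\notin n^2+\gamma[D_n]$; so Theorem \ref{thm:multVnp} gives $B_1\in \mathcal{B}(L^p)$. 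For $B_2$, take the operator acting as the scalar $(n-m)^{-1}$ on $V_m^p$ for $m\ne n$ and as $0$ on $V_n^p$: this equals convolution in the $K$-variable against the kernel $k(\theta)=\sum_{m\neq n}(n-m)^{-1}e^{-im\theta}$, which is (a translate of) the conjugate-function kernel on the torus $\TT$, and hence $B_2$ is bounded on $L^p(G)$ for $1<p<\infty$ by Fubini and M. Riesz's theorem on $L^p(\TT)$. A fibrewise check on each $V_m^p$ verifies \eqref{eq:comutant}, and both $B_j$ commute with $\mathcal{R}$ because they arise from the joint spectral calculus of $(L,-iX)$.

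The main obstacle is the $L^p$-boundedness of $B_2$ in the last step: the natural spectral-calculus definition only gives an $L^2$-bounded operator, and moving to $L^p$ requires identifying the spectral multiplier $h(m)=(1-\delta_{mn})(n-m)^{-1}$ of $-iX$ with a classical Fourier multiplier of Hilbert-transform type on the compact fibre $K\cong\TT$. Everything else is either soft (closedness of the spectra, commutativity with $\mathcal{P}_n$) or a direct invocation of Theorem \ref{thm:multVnp} and Proposition \ref{pro:CleSte}.
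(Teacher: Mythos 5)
Your inclusions $\sigma_J(L,-iX)\subseteq\sigma'(L,-iX)$ and $\sigma'(L,-iX)\subseteq\Delta^p(L,-iX)$ are sound. The first replaces the paper's appeal to Mirotin's theorem (and the verification of condition (K)) by the direct observation that $B_1,B_2\in\mathcal{R}'$ preserve the domains and commute with the operators, so the identity \eqref{eq:comutant}, applied to an approximate joint eigensequence (resp.\ to an arbitrary $f$), excludes approximate (resp.\ residual) joint spectrum; this is legitimate and more self-contained. The second is essentially the paper's own construction with the roles of $B_1,B_2$ swapped: your scalar operator $\sum_{m\neq n}(n-m)^{-1}\mP_m$ is bounded on $L^p$ because its kernel on $\TT$ is an integrable (indeed bounded, sawtooth-type) function, so the appeal to M.~Riesz is unnecessary, and your $T_g=(\la I-L)^{-1}\mP_n$ is bounded by Theorem \ref{thm:multVnp}, exactly as in the paper.

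The genuine gap is in $\Delta^p(L,-iX)\subseteq\sigma_J(L,-iX)$, in the continuous part. Your contradiction argument assumes that if $\la\notin\sigma_{V_n^p}(L_n)$ then the $L^p$ resolvent ``realises the multiplier $m(z)=(\la-z)^{-1}$'' and feeds this into Proposition \ref{pro:CleSte}. But Proposition \ref{pro:CleSte} (through Proposition \ref{spectrumL_n}) concerns operators $T_m=m(L_n)\mP_n$ built from the $L^2$ functional calculus with $m$ a \emph{bounded} Borel function on $\Delta'_n$. This requires (i) $\la\notin\Delta'_n$, since otherwise $(\la-z)^{-1}$ is unbounded on $\Delta'_n$ and $T_m$ is not even defined, and (ii) an argument identifying the $L^p$ resolvent with the $L^2$-calculus resolvent on $V_n^2\cap V_n^p$ (consistency, e.g.\ via the common heat semigroup and analytic continuation), which you assert but do not supply. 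Point (i) is not a marginal case: the whole half-line $[n^2+1/4,\infty)$, i.e.\ the entire $L^2$ spectrum of $L_n$, lies inside $n^2+\Par(\delta(p))$, and for those $\la$ your argument as written proves nothing. It is repairable, e.g.\ by first treating $\la\in\bigl(n^2+\Par(\delta(p))\bigr)\setminus\Delta'_n$ and then invoking closedness of the spectrum, such points being dense in $n^2+\Par(\delta(p))$ when $\delta(p)>0$; but some such step must be added. The paper sidesteps the issue altogether: for $p>2$ it exhibits genuine $L^p$ joint eigenfunctions, namely $\zns$ with $(n,s)$ in the interior (Lemma \ref{cor:large_t} and Lemma \ref{lem:spher_demi-entier}), obtaining $\Int\Delta^p(L,-iX)\subseteq\sigma_a$ and concluding by closedness of $\sigma_a$, while for $1<p<2$ it dualizes, identifying the residual spectrum on $L^p$ with the approximate spectrum on $L^{p'}$. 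Your transfer of $\la\in\sigma_{V_n^p}(L_n)$ to the joint spectrum (exact eigenvalue $n$ of $-iX$ on $V_n^p$, and the functional $\phi\circ\mP_n$ in the residual case) is correct once membership $\la\in\sigma_{V_n^p}(L_n)$ has actually been established.
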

\begin{proof}
	We will apply Theorem 1 1) of Mirotin \cite{Mir2}.
	
	Note first that the pair $(L,-iX)$ satisfies condition (K) from \cite{Mir2}.
	That is, 
	\begin{enumerate}[K1]
		\item 
		if $f\in \Dom(XL)\cap \Dom L$ then $f\in \Dom(LX)$ and $XLf=LXf,$
		\item if $f\in \Dom(LX)\cap \Dom X$ then $f\in \Dom(XL)$ and $XLf=LXf,$
	\end{enumerate}
	Let us justify only K1, as the proof of K2 is similar. Write $iX$ as $|X|\chi_{iX>0}-|X|\chi_{iX\le 0}.$ Note that the projection $\chi_{iX>0}$ is bounded on all $L^p$ spaces (this is equivalent to the boundedness of the Hilbert transform). Thus, taking $g\in \Dom X= \Dom |X|$ and denoting $g_+=\chi_{iX>0}(g)$ and $g_-=\chi_{iX\le 0}(g)$ we have
	\begin{equation*}
	iX g= |X|g_+-|X|g_-=\lim_{t\to 0^+}\frac{(e^{-t|X|}-I)(g_+-g_-)}{t}.
	\end{equation*}
	Taking $g=Lf\in \Dom X,$ with $f\in \Dom L$ we have \begin{equation}
	\label{eq:Pfjointspectrum1}
	\frac{(e^{-t|X|}-I)(g_+-g_-)}{t}=L\bigg(\frac{(e^{-t|X|}-I)(f_+-f_-}{t}\bigg).\end{equation}
	Here we have also used the fact that if $f\in \Dom L$ then $e^{-t|X|}f\in \Dom L$ and $e^{-t|X|}Lf=Le^{-t|X|}f.$  Now, as $t\to 0^+$ the left hand side of \eqref{eq:Pfjointspectrum1} converges to $iX Lf$ while $\frac{(e^{-t|X|}-I)(f_+-f_-)}{t}$ goes to $iX f.$ Since $L$ is closed being a generator of the strongly continuous semigroup in $L^p$ we conclude that $Xf\in \Dom L $ and $XLf=LXf.$

	Therefore in view of the inclusion
	$$\sigma'(L,-iX)
\supseteq\sigma_J(L,-iX)$$
	proved in \cite[Theorem 1 1)]{Mir2} it is enough to show that $\sigma'(L,-iX)\subseteq \Delta^p(L,-iX)$ and that $\Delta^p(L,-iX)\subseteq \sigma_J(L,-iX).$
	
	We start with proving that $\sigma'(L,-iX)\subseteq \Delta^p(L,-iX).$ To this end we assume that $(\xi,\la)\not \in \Delta^p(L,-iX).$ If $\xi\not \in \ZZ/2$ then $B_1=(\xi I-iX)^{-1}$ and $B_2=0$ belong to $\mathcal{R}'$ and satisfy \eqref{eq:comutant}, hence $(\xi,\la)\not \in \sigma'(L,-iX)$. It remains to consider $\xi=n_0$ for some $n_0\in \ZZ/2.$ In this case we take
	$$B_1=\sum_{n\in \ZZ/2, n\neq n_0}(n_0-n)^{-1}\mP_n\qquad \textrm{and}\qquad B_2=(\la I-L)^{-1}\mP_{n_0}.$$
	
	Then $B_1$ is bounded on $L^p$  by Fourier analysis on the torus. 
	Indeed, denoting $$H(\theta)=\sum_{n\in \ZZ/2, n\neq n_0}(n_0-n)^{-1} e^{-in\theta},\qquad \theta \in \TT,$$ we have
	$$(B_1 f)(g u_{\varphi})=\fint_{\TT}f(g u_{\theta})H(\theta-\varphi)\,d\theta,\qquad g\in G, \varphi \in \TT.$$
	Moreover, it is not hard to see that
	$$H(\theta+2\pi)=\frac{\theta}{2},\qquad \theta \in (-2\pi,2\pi].$$ 
	 Therefore $H\in L^1(\TT)$ and using Cartan coordinates \eqref{eq:Cartan} together with Fubini's theorem we conclude that $B_1$ is bounded on $L^p.$
	
	  We claim that also $B_2$ is bounded on $L^p$. This follows from Theorem \ref{thm:multVnp}. Indeed, taking $m(s)=(\la -s)^{-1}$ we have
	\begin{equation*}
	m(L_{n_0})\mP_{n_0}= (\la I-L)^{-1}\mP_{n_0},\qquad\textrm{and}\qquad m_{n_0}(s)=m(n_0^2+\gamma(s))=(\la-(n_0^2+\gamma(s)))^{-1}.
	\end{equation*}
	Clearly, $m_{n_0}(s)$ extends to a bounded holomorphic function in $\Int S_{\delta(p)}.$ 
	Since $\Delta_{n_0}^p=n_0^2+\gamma\left[D_{n_0}\cup S_{\delta(p)}\right]$, we see that if $\la \not\in \Delta_{n_0}^p$ then we have  $|\la-(n_0^2+\gamma(s))|>c>0$ for $s\in D_{n_0}\cup S_{\delta(p)}.$ Then, it is straightforward to see that
	$$\sup_{s\in D_{n_0}}|m_{n_0}(s)|+\|m_{n_0}\|_{MH(S_{\delta(p)},2)}<\infty$$
	Thus, Theorem \ref{thm:multVnp} implies that $B_2= (\la I-L)^{-1}\mP_{n_0}$ is bounded on $L^p.$ This finishes the proof of the inclusion $\sigma'(L,-iX)\subseteq \Delta^p(L,-iX).$
	
	We shall now prove that $\Delta^p(L,-iX)\subseteq \sigma_J(L,-iX).$ Consider first $p>2.$ Then, by Lemma \ref{lem:Lp_disc_part} and Corollary \ref{cor:large_t} we see that the spherical function $\zns\in L^p$ for $(n,s)\in \Int \Delta^p(L,-iX).$ Thus, every such $\zns$ is a joint eigenfunction on $L^p$ for $(L,-iX)$ and $\Int \Delta^p(L,-iX)\subseteq \sigma_a(L,-iX).$ Since $\sigma_a(L,-iX)$ is closed, see \cite[Lemma 2 1)]{Mir2}, we obtain $$\Delta^p(L,-iX)\subseteq \sigma_a(L,-iX)\subseteq \sigma_J(L,-iX).$$ For $1<p<2$ we use duality. Denote by $iX_{p'}$ and $L_{p'}$ the operators $iX$ and $L$ when considered with their respective domains on  $L^{p'}(G).$
	Then, from \cite[Lemma 11]{Mir0} we have  $$\sigma_J(L,-iX)\supseteq \sigma_R(L,-iX)=\sigma_a((iX)^*,L^*)=\sigma_a(iX_{p'},L_{p'})\supseteq E_{p'}=\Delta^p(L,-iX).$$   The proof of  $\Delta^p(L,-iX)\subseteq \sigma_J(L,-iX)$ is thus completed, hence, also the proof of Theorem \ref{thm:jointspectrum}.
\end{proof}

Besides the given notions of the joint spectrum there is also the bicommutant spectrum $\sigma''(L,-iX),$ the Shilov spectrum $\sigma(L,-iX)$ and the Taylor spectrum $\sigma_T(L,-iX),$ see e.g.\cite{Mir2} for the definitions. As to the Taylor spectrum, due to the inclusions
$$
\sigma'(L,-iX)\supseteq\sigma_T(L,-iX)
\supseteq\sigma_J(L,-iX)
$$
	proved in \cite{Mir2}, we also have $\sigma_T(L,-iX)=\Delta^p(L,-iX)$.

We finish the paper by stating a holomorphic extension property of joint spectral multipliers of $(L,-iX).$ 
\begin{cor}
	\label{cor:CleSte}
	Assume that $m(L,-iX)$ is a bounded operator on $L^p$ for some $1<p<\infty,$ $p\neq 2.$ Then, for each $n\in \ZZ/2,$  the function $m(\cdot,n)$ extends to a bounded holomorphic function in $\Int (n^2+\Par(\delta(p))).$ Moreover, the bound 
	\begin{equation}
	\label{eq:CleStemult}
	\|m(\cdot,n)\|_{H^{\infty}(\Int(n^2+\Par(\delta(p)))}\le \|m(L,-iX)\|_{L^p\to L^p}
	\end{equation}
	holds uniformly in $n\in \ZZ/2.$
\end{cor}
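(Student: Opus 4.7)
My plan is to reduce the statement to Proposition \ref{pro:CleSte} applied on a single $K$-type, and then transfer the conclusion from the strip $S_{\delta(p)}$ to the shifted parabola $n^2+\Par(\delta(p))$ via the change of variable $z=n^2+\gamma(s)$.

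First, I would fix $n\in\frac12\ZZ$ and restrict $m(L,-iX)$ to $V_n^p$ by composing with $\mP_n$. Because $-iX$ and $L$ strongly commute and $\mP_n$ is the spectral projection of $-iX$ onto its $n$-eigenspace, the joint calculus of Proposition \ref{pro-multiplier} gives
$$
m(L,-iX)\mP_n=\tilde m(L_n)\mP_n=T_{\tilde m},\qquad \tilde m(z):=m(z,n),
$$
where $T_{\tilde m}$ is precisely the one-variable multiplier operator associated with $\tilde m$ and $K$-type $n$ in the sense used in Proposition \ref{pro:CleSte}. Since $\mP_n$ is a contraction on every $L^p$ (Section \ref{subsec-Ktypes}), this immediately yields $\|T_{\tilde m}\|_{L^p\to L^p}\le\|m(L,-iX)\|_{L^p\to L^p}$, uniformly in $n$.

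Next, I would apply Proposition \ref{pro:CleSte} to $\tilde m$. This tells me that $\tilde m_n(s):=\tilde m(n^2+\gamma(s))=m(n^2+\gamma(s),n)$ extends from $S_0\cup D_n$ to a bounded holomorphic function on $\Int S_{\delta(p)}$ satisfying
$$
\|\tilde m_n\|_{H^\infty(\Int S_{\delta(p)})}\le\|T_{\tilde m}\|_{L^p\to L^p}\le\|m(L,-iX)\|_{L^p\to L^p}.
$$

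Finally, I would transfer this estimate across the map $\phi(s):=n^2+\gamma(s)$. Since $\gamma(s)=1/4-(s-1/2)^2$, one checks that $\phi$ surjects $S_{\delta(p)}$ onto $n^2+\Par(\delta(p))$ in a two-to-one fashion obeying $\phi(s)=\phi(1-s)$, with a single branch point at $s=1/2$ mapping to $n^2+1/4$. The symmetry $\tilde m_n(s)=\tilde m_n(1-s)$ is built into the definition, so $\tilde m_n$ descends to a well-defined bounded function on $n^2+\Par(\delta(p))$ which coincides with $m(\cdot,n)$ on the points arising from $S_0\cup D_n$. Holomorphy away from $n^2+1/4$ follows from local holomorphic invertibility of $\phi$, while boundedness and Riemann's removable singularity theorem upgrade this to holomorphy at the branch point. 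The sup-norm bound \eqref{eq:CleStemult} is preserved because $\phi$ is surjective onto the shifted parabola.

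I anticipate that the only slightly delicate step will be the bookkeeping around the branch point of $\phi$ (equivalently, justifying that the identification $z=n^2+\gamma(s)$ produces a well-defined holomorphic function on the parabola despite the $s\leftrightarrow 1-s$ ambiguity); apart from this, the argument is a direct combination of the contractivity of $\mP_n$ on $L^p$ and Proposition \ref{pro:CleSte}.
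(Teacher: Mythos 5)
Your proposal is correct and follows essentially the same route as the paper: restrict $m(L,-iX)$ to the $K$-type $n$ via $\mP_n$ (a contraction on $L^p$), apply Proposition \ref{pro:CleSte} to $m_n(s)=m(n^2+\gamma(s),n)$, and transport the resulting bounded holomorphic extension from $\Int S_{\delta(p)}$ to $\Int(n^2+\Par(\delta(p)))$ using $\gamma[S_{\delta(p)}]=\Par(\delta(p))$. Your extra care at the branch point $s=1/2$ (using $m_n(s)=m_n(1-s)$, which persists for the extension by the identity theorem since $S_0\subset\Int S_{\delta(p)}$, plus removability of the singularity) only makes explicit what the paper's one-line change of variable leaves implicit.
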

\begin{proof}
	Using Proposition \ref{pro:CleSte} we see that for each $n\in \ZZ/2$ the function $m(n^2+\gamma(s),n)$ extends to a bounded holomorphic function on $S_{\delta(p)}.$ Since $\gamma(S_{\delta(p)})=\Par(p)$ we conclude that $m(\cdot,n)$ extends to a bounded holomorphic function on $\Int(n^2+\Par(\delta(p))).$ Finally, \eqref{eq:CleSte} implies that
	$$\|m(\cdot,n)\|_{H^{\infty}(\Int(n^2+\Par(\delta(p)))}\le \|m(L,-iX)\|_{V_n^p\to V_n^p}$$
	which leads to \eqref{eq:CleStemult}. This completes the proof of the corollary.
\end{proof}

\subsection*{Acknowledgments}  B{\l}a{\.z}ej Wr{\'o}bel thanks Wojciech M{\l}otkowski and Karol Penson for literature references. B{\l}a{\.z}ej Wr{\'o}bel was supported by
the National Science Centre, Poland (NCN) grant 2014\slash 15\slash D\slash ST1\slash 00405

\end{document}